\documentclass[11pt,oneside]{article}
\usepackage[square, numbers]{natbib}
\usepackage[a4paper,margin=1in]{geometry}
\usepackage{amsmath,amssymb,amsthm,mathtools}
\usepackage{microtype}
\usepackage{hyperref}
\hypersetup{hidelinks}

\numberwithin{equation}{section}

\newtheorem{theorem}{Theorem}[section]
\newtheorem{proposition}[theorem]{Proposition}
\newtheorem{lemma}[theorem]{Lemma}
\newtheorem{corollary}[theorem]{Corollary}
\newtheorem{definition}[theorem]{Definition}
\newtheorem{example}[theorem]{Example}
\theoremstyle{remark}
\newtheorem{remark}[theorem]{Remark}

\newcommand{\R}{\mathbb{R}}
\newcommand{\T}{\mathbb{T}}
\newcommand{\osc}{\operatorname{osc}}
\newcommand{\cvar}{\mathrm{var}}

\title{Partition invariance of variation indices \\ and classical $p$-variation spaces}

\author{
	\textsc{Donghan Kim} 
	\thanks{Department of Mathematical Sciences, KAIST, South Korea (E-mail: {\it kimdonghan@kaist.ac.kr})}
}

\date{\today}

\begin{document}

\maketitle

\begin{abstract}
    We study the variation index of a continuous path along a refining partition sequence, defined as the infimum of exponents $p\ge1$ for which the corresponding $p$-th variation sums are uniformly bounded. Under natural geometric assumptions on the partitions, we prove that this index coincides with the classical variation index, defined using all finite partitions, for every path with positive H\"older regularity. The result extends to paths with an all-orders Dini modulus and yields a characterization of the classical variation index in terms of dyadic Faber--Schauder coefficients. Explicit counterexamples show that the partition assumptions cannot in general be omitted. Even under these assumptions, however, membership in the corresponding function spaces at the critical exponent may still depend on the partition sequence. We further obtain compact subcritical embeddings into the space of paths with vanishing classical $p$-variation and establish a strict hierarchy of coefficient and variation spaces. Finally, we show that the continuous embedding of the classical $p$-variation space into the space of continuous paths with uniformly bounded dyadic $p$-th variation sums has nonclosed range.
\end{abstract}

\medskip

\noindent\textit{MSC 2020:} 26A16, 26A45, 40A05, 46E15, 60L20.\\
\noindent\textit{Keywords:} variation index, classical $p$-variation, refining partition sequence, H\"older regularity, Faber--Schauder coefficients, compact embedding.

\bigskip

\section{Introduction}\label{sec:introduction}

The classical $p$-variation of a continuous path is an intrinsic measure of its oscillation, obtained by taking the supremum of the corresponding variation sums over all finite partitions of the time interval. Spaces of paths with finite classical $p$-variation, together with the subspaces obtained by closing piecewise linear paths in the $p$-variation norm, play a fundamental role in deterministic integration and rough path theory; see, for instance, \cite{friz2010,FrizHairer}. In pathwise constructions based on discrete observations, however, one often fixes a refining sequence of partitions $\pi=(\pi^n)_{n\ge0}$ with vanishing mesh and considers only the increments of the path along the prescribed partition points. This leads to the following variation index of a path $x\in C^0([0,T])$ along $\pi$:
\[
    p_\pi(x):=\inf\Big\{q\ge1:\sup_{n\ge0}[x]_{\pi^n}^{(q)}(T)<\infty\Big\},
\]
which is the critical threshold for the uniform boundedness of the variation sums along $\pi$. By contrast, the classical variation index is
\[
    p_{\cvar}(x):=\inf\Big\{q\ge1:\|x\|_{q\text{-}\cvar}<\infty\Big\},
\]
where the classical $q$-variation seminorm is defined using all finite partitions. Unlike $p_{\cvar}(x)$, the quantity $p_\pi(x)$ may depend on the sampling sequence $\pi$.

The use of variation along a prescribed partition sequence in pathwise calculus originates in F\"ollmer's pathwise It\^o formula, which defines integrals of gradients of smooth functions along paths with finite quadratic variation and proves a deterministic change-of-variable formula \cite{follmer1981}. In the same quadratic-variation setting, a functional-analytic theory of the resulting integral was developed, including a pathwise It\^o isometry, continuity on a suitable integrand space, and a rough-smooth decomposition \cite{ananova2017}. Higher-order extensions include change-of-variable formulas based on even-order $p$-th variation \cite{perkowski2019} and fractional formulas for arbitrary $p>1$ \cite{ruhong2022}. The dependence of quadratic variation and related roughness properties on the chosen refining sequence has been investigated in \cite{das2021,das2020}. These developments motivate the study of the critical exponent associated with a prescribed partition sequence, even when the corresponding variation sums do not converge.

Beyond its role in pathwise calculus, the variation index provides a notion of pathwise roughness that complements H\"older regularity. For fractional Brownian motion, the dyadic variation index almost surely agrees with the reciprocal of the critical H\"older exponent, but this identity need not hold even for paths with positive H\"older regularity; explicit examples were given in \cite{fake_fBM}. Thus, the critical H\"older exponent alone does not determine the value of the variation index. This raises a different question: can positive H\"older regularity nevertheless ensure that the variation index is independent of the choice among suitably regular partition sequences? The present paper addresses this question without requiring the variation index to equal the reciprocal of the critical H\"older exponent.

In a recent paper \cite{das-kim2024}, the space
\[
    \mathcal X_\pi^p:=\Big\{x\in C^0([0,T]):\sup_{n\ge0}[x]_{\pi^n}^{(p)}(T)<\infty\Big\}
\]
was studied for a general class of refining partition sequences. Membership in $\mathcal X_\pi^p$ was characterized in terms of the Schauder coefficients associated with $\pi$. Under additional H\"older regularity assumptions, a Ciesielski-type isomorphism with an appropriate coefficient space was also established, extending the classical result of Ciesielski \cite{Ciesielski:isomorphism}. Complementary recent work studies the more restrictive class in which the $p$-th variation sums converge along a fixed partition sequence, and develops a constructive and functional-analytic framework for paths with prescribed $p$-th variation, including Banach subspaces with explicitly controlled variation \cite{das-kim-lim2026}. The present paper addresses a different question. Rather than fixing $p$ and characterizing the resulting space $\mathcal X_\pi^p$, we ask when its critical threshold
\[
    p_\pi(x)=\inf\{q\ge1:x\in\mathcal X_\pi^q\}=\inf\Big\{q\ge1:\sup_{n\ge0}[x]_{\pi^n}^{(q)}(T)<\infty\Big\}
\]
is independent of the partition sequence. A central phenomenon revealed below is that this critical threshold is considerably more stable than membership at the critical exponent itself.

Since each $\pi^n$ is one of the finite partitions appearing in the definition of the classical variation, one always has $p_\pi(x)\le p_{\cvar}(x)$. Our principal result identifies conditions under which the reverse inequality holds. More precisely, under two assumptions on the partition sequence, namely the balancedness condition in \eqref{eq:balanced-intro} and the upper bound on the successive mesh ratios in \eqref{eq:no-large-jump-intro}, Theorem~\ref{thm:main-intro} shows that
\begin{equation}\label{eq:introduction-main-result}
    p_\pi(x)=p_{\cvar}(x)=p_\T(x)
\end{equation}
for every path $x$ with positive H\"older regularity, where $\T$ denotes the dyadic partition sequence. The class of partition sequences covered by the theorem contains every balanced complete refining partition sequence considered in \cite{das-kim2024}, but is strictly larger, since no lower bound strictly larger than one is imposed on the successive mesh ratios. By equivalence of norms, the same conclusion extends immediately from real-valued continuous paths to finite-dimensional ones.

We further show that positive H\"older regularity is not essential to the proof of \eqref{eq:introduction-main-result}; Corollary~\ref{cor:dini-main} replaces it by an all-orders Dini condition on the modulus of continuity. A stretched-logarithmic function in Example~\ref{ex:non-holder-modulus} satisfies this condition without belonging to $C^\alpha([0,T])$ for any $\alpha>0$, so this extension genuinely goes beyond the union of the positive H\"older classes.

Separate counterexamples show that neither the balancedness condition nor the upper bound on the successive mesh ratio can in general be removed. If the upper bound on the mesh ratio is omitted, a perfectly balanced sparse subsequence of the dyadic partitions may have variation index one even when the dyadic variation index is any prescribed number $s>1$. If the balancedness condition is omitted, an analogous failure may occur even when the largest mesh decreases exactly by a factor of two at every level. These examples distinguish the roles of scale regularity and spatial balance in the partition-invariance result \eqref{eq:introduction-main-result}.

Equality of variation indices does not imply equality of the corresponding spaces at the critical exponent. Suppose that $p_\pi(x)=p_\sigma(x)=p_{\cvar}(x)=p<\infty$. Then membership in $\mathcal X_\pi^q$ and $\mathcal X_\sigma^q$ necessarily agrees for every noncritical exponent $q\ne p$, whereas no such conclusion follows at $q=p$. Proposition~\ref{prop:critical-space-noninvariance} shows that this limitation is genuine. For every $p>1$, we construct a path and two balanced complete refining partition sequences $\pi$ and $\sigma$ such that
\[
    x\in\bigcap_{0<\alpha<1/p}C^\alpha([0,1]),\qquad p_\pi(x)=p_\sigma(x)=p_{\cvar}(x)=p,\qquad x\in\mathcal X_\sigma^p\setminus\mathcal X_\pi^p.
\]
Thus, under the balancedness and mesh-ratio assumptions, the variation index becomes canonical, while membership in the corresponding critical space may remain sensitive to the locations of the partition points.

The second part of the paper develops the consequences of this distinction for classical $p$-variation spaces. Let $\mathcal C^{p\text{-}\cvar}([0,1])$ denote the Banach space of continuous paths with finite classical $p$-variation, and let $\mathcal C^{0,p\text{-}\cvar}([0,1])$ be the closure of the continuous piecewise linear paths in the classical $p$-variation norm. Under a related assumption on $\pi$, Proposition~\ref{prop:subcritical-compact} shows that, for $\alpha\in(0,1]$ and $1<q<p$, the embedding
\begin{equation}\label{eq:introduction-compact-embedding}
    C^\alpha([0,1])\cap\mathcal X_\pi^q\hookrightarrow\mathcal C^{0,p\text{-}\cvar}([0,1])
\end{equation}
is compact. This shows that subcritical variation control along one sufficiently regular partition sequence, together with positive H\"older regularity, yields approximation by piecewise linear paths in the stronger classical $p$-variation topology.

To study the critical exponent more explicitly, we specialize to the dyadic Faber--Schauder system. If $\theta_{n,k}^{x,\T}$ denotes the Faber--Schauder coefficient of $x$ at level $n$ and position $k$, set
\[
    \xi_n^{(p)}(x):=2^{-\frac{np}{2}}\sum_{k\in I_n}|\theta_{n,k}^{x,\T}|^p,\qquad I_n:=\{0,1,\ldots,2^n-1\}.
\]
The results of \cite{das-kim2024} identify uniform boundedness of these levelwise quantities with membership in $\mathcal X_\T^p$. Combining this characterization with the all-orders Dini extension of the partition-invariance theorem gives
\[
    p_{\cvar}(x)=\inf\Big\{q>1:\limsup_{n\to\infty}\xi_n^{(q)}(x)<\infty\Big\}
\]
for every path with an all-orders Dini modulus. Thus, although the classical variation index is defined using all finite partitions, it can be recovered from one canonical family of dyadic Faber--Schauder coefficients.

At a fixed exponent $p>1$, define
\[
    \mathcal S_\T^p:=\Big\{x\in C^0([0,1]):\sum_{n=0}^\infty\big(\xi_n^{(p)}(x)\big)^{1/p}<\infty\Big\}.
\]
We prove the inclusion chain
\begin{equation}\label{eq:introduction-coefficient-chain}
    \mathcal S_\T^p\subset\mathcal C^{0,p\text{-}\cvar}([0,1])\subset\mathcal C^{p\text{-}\cvar}([0,1])\subset\mathcal X_\T^p.
\end{equation}
The spaces $\mathcal S_\T^p$ and $\mathcal X_\T^p$ admit explicit descriptions in terms of the dyadic Faber--Schauder coefficients. Moreover, every positively H\"older continuous path in $\mathcal X_\T^q$ for some $q<p$ belongs to $\mathcal S_\T^p$. More generally, the positive H\"older regularity assumption can be replaced by an all-orders Dini condition. Hence, within these regularity classes, the discrepancy between the coefficient conditions and finite classical $p$-variation is confined to the critical exponent. Nevertheless, Theorem~\ref{thm:strict-endpoint-hierarchy} shows, by explicit examples, that every inclusion in \eqref{eq:introduction-coefficient-chain} is strict.

Finally, Corollary~\ref{cor:nonclosed-classical-embedding} shows that the continuous embedding
\[
    \mathcal C^{p\text{-}\cvar}([0,1])\hookrightarrow\mathcal X_\T^p
\]
does not have closed range. Thus, the dyadic norm on $\mathcal X_\T^p$ induces a genuinely weaker topology on the classical $p$-variation space and is not equivalent to the classical $p$-variation norm.

The remainder of the paper is organized as follows. Section~\ref{sec:partition-invariance} proves the partition-invariance theorem, extends it beyond H\"older regularity, establishes the sharpness of the partition assumptions, and constructs the critical-space counterexample. Section~\ref{sec:classical-pvar-spaces} studies the resulting relations with classical $p$-variation spaces, proves the compact subcritical embedding, derives the Faber--Schauder characterizations, and establishes the strict hierarchy in \eqref{eq:introduction-coefficient-chain} and the nonclosed-range result.

\bigskip

\section{Partition invariance of variation indices} \label{sec:partition-invariance}

\subsection{Notations}  \label{subsec:notations}

For a fixed $T>0$, let $C^0([0, T])$ denote the space of real-valued continuous functions defined on the finite interval $[0, T]$. We denote by $C^{\alpha}([0,T])$ the space of $\alpha$-H\"older continuous functions with the $\alpha$-H\"older seminorm
\[
    |x|_{C^{\alpha}} := \sup_{\substack{s, t \in [0, T] \\ s \neq t}} \frac{|x(t)-x(s)|}{|t-s|^{\alpha}}.
\]

Let $\pi=(\pi^n)_{n\ge 0}$ be a sequence of partitions of $[0,T]$ such that
\[
    \pi^n=\{ 0 = t_0^n < t_1^n < \cdots < t_{N(\pi^n)}^n = T\}, \qquad n \ge 0.
\]
We say that $\pi$ is \emph{refining} if $\pi^n \subset \pi^{n+1}$ for all $n \ge 0$.
With the notation
\begin{equation}\label{eq:mesh-notation}
    \underline{\pi^n} :=\min_{0\le j<N(\pi^n)}(t_{j+1}^n-t_j^n), \qquad  |\pi^n| :=\max_{0\le j<N(\pi^n)}(t_{j+1}^n-t_j^n),
\end{equation}
we say that $\pi$ has \emph{vanishing mesh} if $|\pi^n| \to 0$ as $n \to \infty$. Throughout the paper, $\Pi([0,T])$ denotes the collection of refining partition sequences of $[0,T]$ with vanishing mesh. We use the convention $\pi^0=\{0,T\}$.

We say that $\pi$ is \emph{balanced} if there exists $c>1$ such that
\begin{equation}\label{eq:balanced-intro}
    \frac{|\pi^n|}{\underline{\pi^n}}\le c \quad \text{holds for every} \quad n\ge 0.
\end{equation}
We also say that $\pi$ is \emph{perfectly balanced} if $|\pi^n| = \underline{\pi^n}$ for all $n \ge 0$. In particular, when $\pi^n$ satisfies for all $n\ge 0$
\[
    N(\pi^n) = 2^n, \qquad t^n_k = \frac{kT}{2^n}, \qquad k = 0, 1, \ldots, 2^n,
\]
we write this sequence $\T$ and call $\T = (\T^n)_{n \ge 0}$ the dyadic partition sequence of $[0, T]$.

For $p\ge 1$ and $x\in C^0([0,T])$, we define the $p$-th variation of $x$ along $\pi^n$
\begin{equation}\label{eq:intro-pvar}
    [x]_{\pi^n}^{(p)}(T) := \sum_{j = 0}^{N(\pi^n)-1} \big|x(t_{j+1}^n)-x(t_j^n)\big|^p, \qquad n \ge 0,
\end{equation}
and the variation index of $x$ along $\pi$
\begin{equation}\label{eq:intro-index}
    p_\pi(x) := \inf \big\{p\ge 1 : \sup_{n\ge 0} [x]^{(p)}_{\pi^n}(T) < \infty \big\}.
\end{equation}
The variation index, as defined in \cite[Definition~2.3]{das-kim2024}, depends in general on the choice of the partition sequence. For $p\ge1$, consider the Banach space introduced in \cite[Definition~2.4]{das-kim2024},
\begin{equation}\label{eq:Xp}
    \mathcal X_\pi^p:=\Big\{x\in C^0([0,T]):\sup_{n\ge0}[x]_{\pi^n}^{(p)}(T)<\infty\Big\},
\end{equation}
equipped with the norm
\begin{equation}\label{eq:Xp-norm}
    \|x\|_\pi^{(p)}:=\max\bigg\{\|x\|_\infty,\ |x(0)|+\sup_{n\ge0}\big([x]_{\pi^n}^{(p)}(T)\big)^{\frac1p}\bigg\}.
\end{equation}
Due to the continuity of $x \in C^0([0, T])$, the definition of the variation index implies
\begin{equation}\label{eq:index-threshold}
    \limsup_{n\to\infty}[x]_{\pi^n}^{(q)}(T)
    =\begin{cases}
       0, & q>p_\pi(x),\\
       \infty, & q<p_\pi(x).
    \end{cases}
\end{equation}
Thus, $p_\pi(x)$ separates the exponents $q>p_\pi(x)$, for which the variation sums tend to zero, from the exponents $1\le q<p_\pi(x)$, for which these sums are unbounded. No conclusion is imposed at $q=p_\pi(x)$.

In \cite[Theorem~4.3]{das-kim2024}, for $p>1$, the uniform boundedness $\sup_{n\ge0} [x]_{\pi^n}^{(p)}(T) < \infty$, equivalently, the membership $x\in \mathcal{X}_\pi^p$, was characterized in terms of so-called Schauder coefficients associated with a certain class of refining partition sequences. We shall introduce a very special type of Schauder coefficients (along the dyadic partition sequence $\T$) in Section~\ref{subsec:counterexamples}, and study relations among several function spaces in Section~\ref{sec:classical-pvar-spaces}. Here, however, we focus on seeking conditions under which the variation index in \eqref{eq:intro-index} is independent of the choice of partition sequence.

For any finite partition $\mathfrak P = \{0 = t_0 < t_1 < \cdots < t_n=T\}$ of $[0, T]$, we call $[u, v]$ an \emph{interval} of $\mathfrak P$ and write $[u,v]\in \mathfrak P$, if $u < v$ are consecutive partition points of $\mathfrak P$, i.e., $u=t_i$, $v=t_{i+1}$ for some $i = 0, \cdots, n-1$. For such partition $\mathfrak P$ of $[0,T]$, let
\[
    [x]_{\mathfrak P}^{(p)}(T) := \sum_{[u,v]\in \mathfrak P} \big|x(v)-x(u)\big|^p = \sum_{i=1}^n \big|x(t_i)-x(t_{i-1})\big|^p,
\]
where the first sum is understood to be taken over all consecutive points of $\mathfrak P$. We denote the classical $p$-variation seminorm by
\begin{equation}\label{eq:classical-pvar}
    \|x\|_{p\text{-}\cvar} := \|x\|_{p\text{-}\cvar;[0,T]} = \Big(\sup_{\mathfrak P} [x]_{\mathfrak P}^{(p)}(T)\Big)^{\frac1p},
\end{equation}
where the supremum is taken over all finite partitions of $[0,T]$, and we define
\begin{equation*}
    p_{\cvar}(x) := \inf \Big\{ p\ge 1: \|x\|_{p\text{-}\cvar} < \infty \Big\}.
\end{equation*}
Hence, $p_{\cvar}(x)$ is the critical threshold for the finiteness of the classical $q$-variation. Finiteness at the critical exponent $q=p_{\cvar}(x)$ is not asserted.

Since every $\pi^n$ of a partition sequence $\pi$ is one of the partitions appearing in \eqref{eq:classical-pvar}, one always has
\begin{equation}\label{eq:easy-direction}
    p_\pi(x)\le p_{\cvar}(x).
\end{equation}
The main point is to identify assumptions under which the opposite inequality holds.

\medskip

\subsection{Invariance of variation index} \label{subsec:invariance}

The following is the main result of this section.

\begin{theorem}[Variation index invariance] \label{thm:main-intro}
    Let $\pi=(\pi^n)_{n\ge 0}$ be a balanced refining sequence of partitions of $[0,T]$ with vanishing mesh. Suppose that
    \begin{equation}\label{eq:no-large-jump-intro}
        B_\pi := \sup_{n\ge 0}\frac{|\pi^n|}{|\pi^{n+1}|}<\infty.
    \end{equation}
    Then, for every $\alpha\in(0,1]$ and every $x\in C^\alpha([0,T])$,
    \begin{equation}\label{eq:main-equality-intro}
        p_\pi(x)=p_{\cvar}(x)=p_\T(x).
    \end{equation}
\end{theorem}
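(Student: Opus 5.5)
The inequality $p_\pi(x)\le p_{\cvar}(x)$ is \eqref{eq:easy-direction}. The dyadic sequence $\T$ is perfectly balanced and has $B_\T=2$, so it satisfies the hypotheses of the theorem. Hence it suffices to prove $p_{\cvar}(x)\le p_\pi(x)$ for every $\pi$ as in the statement. I fix $q>p_\pi(x)$, so that $K:=\sup_n[x]^{(q)}_{\pi^n}(T)<\infty$, and any $r>q$. The goal is a bound on $[x]^{(r)}_{\mathfrak P}(T)$ that is uniform over all finite partitions $\mathfrak P$; letting $q\downarrow p_\pi(x)$ and $r\downarrow q$ then gives the claim.

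\emph{Step 1: geometric scales.} Since $|\pi^{n+1}|\ge|\pi^n|/B_\pi$ and $|\pi^n|\to0$, I can choose indices $n_0=0<n_1<\cdots$ with $|\pi^{n_k}|\asymp 2^{-k}T$, where the implied constants depend only on $B_\pi$. The mesh itself need not decrease geometrically, since ratios equal to one are allowed, which is why I pass to a subsequence. Balancedness then gives that every interval of $\pi^{n_k}$ has length $\asymp 2^{-k}$. Consequently every subinterval of length $\lesssim 2^{-k}$ meets only boundedly many intervals of $\pi^{n_k}$, and $N(\pi^{n_k})\asymp 2^k$. I write $\Delta_J x$ for the increment of $x$ over an interval $J$. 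Hölder continuity gives $|\Delta_Jx|\le|x|_{C^\alpha}C2^{-k\alpha}$ for $J\in\pi^{n_k}$. Therefore
\[
\sum_{J\in\pi^{n_k}}|\Delta_Jx|^r\le (C2^{-k\alpha})^{r-q}K .
\]

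\emph{Step 2: chaining.} Assign to each $[u,v]\in\mathfrak P$ its scale $k$, namely the largest $k$ with $v-u\le|\pi^{n_k}|$ (up to constants). For $m\ge k$, let $u_m,v_m$ be the points of $\pi^{n_m}$ nearest to $u,v$, chosen inside $[u,v]$ when possible. Then
\[
x(v)-x(u)=\bigl(x(v_k)-x(u_k)\bigr)+\sum_{m\ge k}\Bigl[\bigl(x(v_{m+1})-x(v_m)\bigr)-\bigl(x(u_{m+1})-x(u_m)\bigr)\Bigr].
\]
The first term is a sum of boundedly many $\pi^{n_k}$-increments. Each bracket is a sum of boundedly many $\pi^{n_{m+1}}$-increments, because refinement means $u_m,u_{m+1}$ lie in a common $\pi^{n_m}$-interval. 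The tails converge by continuity. Writing $a_m$ for the sum of the absolute values of the increments used at level $m$, I get $|x(v)-x(u)|\le\sum_{m\ge k}a_m$. Hölder's inequality with weights $2^{\mp\varepsilon(m-k)}$ then gives
\[
|x(v)-x(u)|^r\le C_\varepsilon\sum_{m\ge k}2^{\varepsilon r(m-k)}a_m^r .
\]

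\emph{Step 3: counting overlaps (main obstacle).} I must show that a fixed interval $J\in\pi^{n_m}$ is charged by at most $C$ intervals of $\mathfrak P$ of each scale $k\le m$. This holds because the charging intervals have disjoint interiors, have length $\gtrsim2^{-k}$, and have an endpoint within distance $\lesssim2^{-m}$ of $J$ (or lie at the level-$k$ block, within distance $\lesssim 2^{-k}$). Summing over $\mathfrak P$ and exchanging the order of summation gives
\[
[x]^{(r)}_{\mathfrak P}(T)\le C\sum_{m\ge0}\Bigl(\sum_{k\le m}2^{\varepsilon r(m-k)}\Bigr)\sum_{J\in\pi^{n_m}}|\Delta_Jx|^r\le C'K\sum_{m}2^{\varepsilon r m}2^{-m\alpha(r-q)} .
\]
This series is finite once $\varepsilon<\alpha(r-q)/r$, and the bound is uniform in $\mathfrak P$. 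This is exactly where the Hölder regularity enters: it compensates for the logarithmic-type loss from chaining across scales. The bookkeeping in this step, namely that boundary effects and the choice of $u_m$ inside $[u,v]$ produce only bounded multiplicities via balancedness and $B_\pi$, is the part I expect to require the most care.
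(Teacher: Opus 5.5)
Your argument is correct, but it takes a genuinely different route from the paper's.

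\textbf{What the paper does instead.} You decompose each increment $x(v)-x(u)$ of an arbitrary test partition by chaining through the levels of a geometric subsequence, and then control overlaps by counting. The paper instead decomposes the \emph{function}: $x=x_0^\rho+\sum_m u_m^\rho$ with $u_m^\rho=x_{m+1}^\rho-x_m^\rho$, along the halving skeleton of Lemma~\ref{lem:skeleton-construction}. Each block is piecewise linear along $\rho^{m+1}$ and vanishes on $\rho^m$. Hence its classical $q$-variation is at most a branching constant times the discrete $q$-energy along $\pi$ (Lemma~\ref{lem:block-qvar}), while $\|u_m^\rho\|_\infty\le\omega_x(|\rho^m|)$. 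Interpolation then gives $\|u_m^\rho\|_{r\text{-}\cvar}\lesssim\omega_x(|\rho^m|)^{1-q/r}$. The triangle inequality for the $r$-variation seminorm and lower semicontinuity finish the proof.

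\textbf{Comparison.} In the paper, the arbitrary test partition is handled once, inside a single block. Summing over scales then costs nothing beyond $\sum_m\omega_x(|\rho^m|)^{1-q/r}$. Your weighted H\"older step, by contrast, loses a factor $\sum_{k\le m}2^{\varepsilon r(m-k)}\asymp2^{\varepsilon rm}$, which only the exponential decay supplied by H\"older regularity can absorb. As a result, the paper's proof has several advantages:
\begin{itemize}
\item it isolates the intrinsic hypothesis, namely a finitely refining skeleton with $\sum_m\|u_m^\rho\|_\infty^\delta<\infty$ (Theorem~\ref{thm:bounded-branching});
\item it extends verbatim to all-orders Dini moduli, whereas your argument would need polynomial weights $(m-k+1)^\beta$ instead of exponential ones for that;
\item it uses balancedness only to obtain bounded branching, whereas your count of increments in the level-$k$ block relies on $\underline{\pi^{n_k}}\gtrsim|\pi^{n_k}|$;
\item it produces the tail bound $\|x-x_N^\rho\|_{r\text{-}\cvar}\to0$, which is reused for the compact embedding into $\mathcal C^{0,p\text{-}\cvar}$.
\end{itemize}
In exchange, your route gives a self-contained, chaining-type bound on $[x]^{(r)}_{\mathfrak P}(T)$ uniformly over all finite partitions. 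It needs no interpolants, no interpolation inequality, and no lower semicontinuity.

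\textbf{One small correction.} Strictly increasing indices $n_k$ with $|\pi^{n_k}|\asymp2^{-k}$ need not exist when $B_\pi>2$ (e.g.\ for the $4$-adic partitions). You can either allow repeated indices, whose brackets vanish, or use the paper's skeleton, for which $(2B_\pi)^{-m}|\rho^0|\le|\rho^m|\le2^{-m}|\rho^0|$. Nothing else in your argument changes.
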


Before proving Theorem~\ref{thm:main-intro} in the next subsection, we make a few remarks on this result.

\begin{remark} [No lower bound on the mesh ratio]
    From Definition~3.4 of \cite{das2021}, we say that a refining partition sequence $\pi = (\pi^n)_{n \ge 0}$ is \emph{complete refining} if there exist positive constants $a, b$ such that
    \begin{equation}    \label{con:complete-refining}
        1+a \le \frac{|\pi^n|}{|\pi^{n+1}|} \le b \quad \text{holds for all } n \ge 0.
    \end{equation}
    We note that condition \eqref{eq:no-large-jump-intro} imposes no lower bound strictly larger than one on the ratio $|\pi^n|/|\pi^{n+1}|$. Hence, Theorem~\ref{thm:main-intro} applies, in particular, to every balanced complete refining partition sequence, but no lower bound strictly larger than one is required.
\end{remark}

\smallskip

\begin{remark} [Membership at the critical exponent]    \label{rem:Banach-space-membership}
    The equality in \eqref{eq:main-equality-intro} concerns only the location of the critical exponent and should not be confused with equality of the corresponding spaces at the critical exponent. Indeed, suppose that $\pi$ and $\sigma$ both satisfy the assumptions of Theorem~\ref{thm:main-intro}, and let $x\in C^\alpha([0,T])$ for some $\alpha>0$. Suppose that the common value $p:=p_\pi(x)=p_\sigma(x)=p_{\cvar}(x)$ is finite. Then \eqref{eq:index-threshold} implies
    \[
        x\in\mathcal X_\pi^q\cap\mathcal X_\sigma^q \quad\text{for every }q>p, \qquad x\notin\mathcal X_\pi^q\cup\mathcal X_\sigma^q \quad\text{for every }1\le q<p.
    \]
    Thus, for this fixed path $x$, membership in $\mathcal X_\pi^q$ and $\mathcal X_\sigma^q$ agrees at every noncritical exponent $q\ne p$. At the critical exponent $q=p$, however, Theorem~\ref{thm:main-intro} gives no information on the membership; it may depend on the partition sequence. In Proposition~\ref{prop:critical-space-noninvariance} of Subsection~\ref{subsec:critical-space-noninvariance}, we construct two balanced complete refining partition sequences $\pi$ and $\sigma$ and a H\"older continuous function $x$ such that
    \[
        p_\pi(x)=p_\sigma(x)=p, \qquad x\in\mathcal X_\sigma^p\setminus\mathcal X_\pi^p.
    \]
\end{remark}

\smallskip

\begin{remark}[Finite-dimensional paths]\label{rem:vector-valued-extension}
    Theorem~\ref{thm:main-intro} extends immediately to continuous paths with values in $\R^d$. For $x=(x^1,\ldots,x^d)\in C^0([0,T];\R^d)$, define the discrete and classical variations using any norm on $\R^d$. By equivalence of norms in finite dimensions,
    \[
        p_\pi(x)=\max_{1\le i\le d}p_\pi(x^i), \qquad p_{\cvar}(x)=\max_{1\le i\le d}p_{\cvar}(x^i).
    \]
    Consequently, under the assumptions of Theorem~\ref{thm:main-intro},
    \[
        p_\pi(x)=p_{\cvar}(x)=p_\T(x), \qquad x\in C^\alpha([0,T];\R^d).
    \]
\end{remark}

\medskip

\subsection{Regular skeletons and proof of Theorem~\ref{thm:main-intro}} \label{subsec:proofs}

We first introduce some useful concepts and establish preliminary results. The proof of Theorem~\ref{thm:main-intro} is given at the end of this subsection.

Let $\rho=(\rho^m)_{m\ge 0}$ be a refining subsequence of $\pi \in \Pi([0,T])$, so that
\begin{equation}\label{eq:subsequence}
    \rho^m=\pi^{n_m}, \qquad 0\le n_0<n_1<\cdots.
\end{equation}
For every interval $I$ of $\rho^m$, i.e., two consecutive points of $\rho^m$, we denote by
\[
    \mathcal{J}_{m}(I) :=\{J:J\text{ is an interval of }\rho^{m+1},\ J\subset I\}
\]
the collection of its children, and set
\begin{equation}\label{eq:branching-number}
    M_m(\rho) := \max_{I\in\rho^m} \#\big(\mathcal{J}_m(I)\big).
\end{equation}
Here, $\# A$ represents the cardinality of a set $A$.

\begin{definition}[Finitely refining skeleton]
    A refining subsequence $\rho =(\pi^{n_m})_{m\ge0}$ of $\pi$ is called a \emph{finitely refining skeleton} of $\pi$, if
    \[
        \sup_{m\ge0}M_m(\rho)<\infty.
    \]
\end{definition}

\begin{definition}[Regular skeleton]    \label{def:regular-skeleton}
    A finitely refining skeleton $\rho=(\pi^{n_m})_{m\ge0}$ of $\pi$ is called \emph{regular}, if
    \begin{equation}    \label{con:regular-skeleton}
        \sum_{m=0}^\infty |\rho^m|^\gamma<\infty, \qquad \text{for every }\gamma>0.
    \end{equation}
\end{definition}

\begin{remark}\label{rem:regular-skeleton}
    If there exists a constant $\lambda\in(0,1)$ such that $|\rho^{m+1}|\le\lambda|\rho^m|$ holds for every $m\ge 0$, then condition \eqref{con:regular-skeleton} is satisfied.
\end{remark}

For $x\in C^0([0,T])$, let $x_m^\rho$ denote the piecewise linear approximation of $x$ along $\rho^m$, namely
\begin{equation}\label{eq:linear-interpolation}
    x_m^\rho(t) := x(u)+\frac{x(v)-x(u)}{v-u}(t-u),
    \qquad t\in[u,v], \quad [u,v]\in\rho^m.
\end{equation}
We define the $m$-th interpolation block by
\begin{equation}\label{eq:block-definition}
    u_m^\rho:=x_{m+1}^\rho-x_m^\rho.
\end{equation}
The function $u_m^\rho$ vanishes at every point of $\rho^m$ and is piecewise linear along $\rho^{m+1}$.

The modulus of continuity of $x$ is denoted by
\begin{equation}\label{eq:modulus}
    \omega_x(h) := \sup \Big\{|x(t)-x(s)|:s,t\in[0,T],\ |t-s|\le h \Big\}.
\end{equation}
The following elementary bound will be used repeatedly:
\begin{equation}\label{eq:block-modulus}
    \|u_m^\rho\|_\infty \le \omega_x(|\rho^m|).
\end{equation}
Indeed, on each interval of $\rho^m$, both $x_m^\rho$ and $x_{m+1}^\rho$ are convex combinations of values of $x$ taken inside the same interval.

\begin{lemma}[Lower semicontinuity of the $p$-variation]\label{lem:pvar-lower-semicontinuity}
    Let $p\ge1$ and let $(f_n)_{n\ge1}\subset C^0([0,T])$ converge uniformly to $f\in C^0([0,T])$. Then
    \begin{equation}\label{eq:pvar-lower-semicontinuity}
        \|f\|_{p\text{-}\cvar}
        \le \liminf_{n\to\infty}\|f_n\|_{p\text{-}\cvar}.
    \end{equation}
\end{lemma}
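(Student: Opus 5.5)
The plan is to reduce the claim to a statement about one fixed partition at a time and then pass to the supremum. By the definition \eqref{eq:classical-pvar}, it suffices to show that for every finite partition $\mathfrak P=\{0=t_0<t_1<\cdots<t_k=T\}$ of $[0,T]$ we have
\[
    [f]_{\mathfrak P}^{(p)}(T)\le \liminf_{n\to\infty}\|f_n\|_{p\text{-}\cvar}^p.
\]
Taking the supremum over $\mathfrak P$ and then $p$-th roots gives \eqref{eq:pvar-lower-semicontinuity}. If the right-hand side is $+\infty$ there is nothing to prove, so I may assume it is finite.

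Fix such a partition $\mathfrak P$. The map
\[
    g\mapsto [g]_{\mathfrak P}^{(p)}(T)=\sum_{i=1}^k|g(t_i)-g(t_{i-1})|^p
\]
depends only on the finitely many values $g(t_0),\ldots,g(t_k)$, and it is a continuous function of those values, since $y\mapsto|y|^p$ is continuous. Uniform convergence $f_n\to f$ gives in particular pointwise convergence $f_n(t_i)\to f(t_i)$ for each $i$. Hence
\[
    [f_n]_{\mathfrak P}^{(p)}(T)\to[f]_{\mathfrak P}^{(p)}(T).
\]
On the other hand, $[f_n]_{\mathfrak P}^{(p)}(T)\le\|f_n\|_{p\text{-}\cvar}^p$ for every $n$, by the definition of the seminorm as a supremum. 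Passing to the $\liminf$ in $n$ yields
\[
    [f]_{\mathfrak P}^{(p)}(T)=\lim_{n\to\infty}[f_n]_{\mathfrak P}^{(p)}(T)\le\liminf_{n\to\infty}\|f_n\|_{p\text{-}\cvar}^p,
\]
which is the claimed bound for the fixed partition $\mathfrak P$.

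The last step is to take the supremum over all finite partitions $\mathfrak P$, which gives $\|f\|_{p\text{-}\cvar}^p\le\liminf_{n}\|f_n\|_{p\text{-}\cvar}^p$. Since $s\mapsto s^{1/p}$ is continuous and increasing on $[0,\infty]$, it commutes with $\liminf$, so this is exactly \eqref{eq:pvar-lower-semicontinuity}.

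There is no real obstacle here: the argument is just the standard fact that a supremum of continuous functionals is lower semicontinuous. The only point needing care is the order of operations, namely fixing $\mathfrak P$ before letting $n\to\infty$. Note also that only pointwise convergence at the partition points is used, so uniform convergence is more than is needed.
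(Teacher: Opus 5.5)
Your proof is correct and follows the paper's argument exactly. You fix a finite partition $\mathfrak P$, use convergence of the finitely many point values to get $[f]_{\mathfrak P}^{(p)}(T)=\lim_n[f_n]_{\mathfrak P}^{(p)}(T)\le\liminf_n\|f_n\|_{p\text{-}\cvar}^p$, and then take the supremum over $\mathfrak P$ followed by the $p$-th root.
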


\begin{proof}
    Fix a finite partition $\mathfrak P$ of $[0,T]$. Since $f_n\to f$ uniformly,
    \[
        [f]_{\mathfrak P}^{(p)}(T)
        =\lim_{n\to\infty}[f_n]_{\mathfrak P}^{(p)}(T)
        \le\liminf_{n\to\infty}\|f_n\|_{p\text{-}\cvar}^p.
    \]
    Taking the supremum over all finite partitions $\mathfrak P$ of $[0,T]$ and then the $p$-th root proves \eqref{eq:pvar-lower-semicontinuity}.
\end{proof}

\begin{lemma}   \label{lem:interpolation-inequality}
    Let $1\le q<r<\infty$. Then, for every $f\in C^0([0,T])$,
    \begin{equation*}
        \|f\|_{r\text{-}\cvar} \le (2\|f\|_\infty)^{1-\frac qr} \big(\|f\|_{q\text{-}\cvar}\big)^{\frac qr}.
    \end{equation*}
\end{lemma}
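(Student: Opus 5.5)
The plan is a termwise comparison on an arbitrary partition, reduced to an elementary inequality for the power mean. Fix a finite partition $\mathfrak P=\{0=t_0<\cdots<t_n=T\}$ and write $a_i:=|f(t_i)-f(t_{i-1})|$. Every $a_i$ is the absolute value of a difference of two values of $f$, so
\[
    a_i\le 2\|f\|_\infty .
\]
Since $r>q$, we can write $a_i^r=a_i^{r-q}a_i^q$ and bound the first factor:
\[
    a_i^r\le(2\|f\|_\infty)^{r-q}a_i^q .
\]
Summing over $i$ gives
\[
    [f]_{\mathfrak P}^{(r)}(T)\le(2\|f\|_\infty)^{r-q}\,[f]_{\mathfrak P}^{(q)}(T)\le(2\|f\|_\infty)^{r-q}\,\|f\|_{q\text{-}\cvar}^q .
\]
Next take the supremum over all finite partitions $\mathfrak P$, which yields $\|f\|_{r\text{-}\cvar}^r\le(2\|f\|_\infty)^{r-q}\|f\|_{q\text{-}\cvar}^q$. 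Taking $r$-th roots then gives exactly the claimed inequality, since $(r-q)/r=1-q/r$.

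Two degenerate cases need a remark. If $\|f\|_{q\text{-}\cvar}=\infty$, the right-hand side is $+\infty$ (unless $f$ is constant, in which case both sides vanish), so the inequality holds trivially. If $\|f\|_\infty=0$, then $f\equiv0$ and both sides are zero. Continuity of $f$ is used only to guarantee that $\|f\|_\infty$ is finite.

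There is no real obstacle here. The only point needing care is the direction of the power: because $r-q>0$, the bound $a_i\le 2\|f\|_\infty$ controls $a_i^{r-q}$ from above, which is what the argument requires.
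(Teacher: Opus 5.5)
Your proof is correct and is essentially the paper's argument: bound each increment by $2\|f\|_\infty$ to get $[f]_{\mathfrak P}^{(r)}(T)\le(2\|f\|_\infty)^{r-q}[f]_{\mathfrak P}^{(q)}(T)$, then take the supremum over $\mathfrak P$ and the $r$-th root. The only blemish is your parenthetical ``unless $f$ is constant'', which is vacuous, since a constant $f$ has zero $q$-variation and so $\|f\|_{q\text{-}\cvar}=\infty$ already forces $\|f\|_\infty>0$.
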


\begin{proof}
    For every finite partition $\mathfrak P$, $[f]_{\mathfrak P}^{(r)}(T) \le (2\|f\|_\infty)^{r-q}[f]_{\mathfrak P}^{(q)}(T)$. Taking the supremum over $\mathfrak P$ and then the $r$-th root gives the result.
\end{proof}

We now establish an estimate for a single interpolation block.

\begin{lemma}   \label{lem:block-qvar}
    Let $q>1$, let $\rho^m\subset\rho^{m+1}$ be two finite partitions, and let $y$ be a continuous function which is piecewise linear along $\rho^{m+1}$ and vanishes on $\rho^m$. Then
    \begin{equation}\label{eq:block-qvar-general}
        \|y\|_{q\text{-}\cvar}
        \le 2^{1-\frac1q} M_m(\rho)^{1-\frac1q}
        \Big(\sum_{[u, v]\in\rho^{m+1}}|y(v)-y(u)|^q\Big)^{\frac1q}.
    \end{equation}
\end{lemma}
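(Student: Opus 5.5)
Fix a finite partition $\mathfrak P=\{s_0<\cdots<s_k\}$ of $[0,T]$. The plan is to bound $\sum_i|y(s_i)-y(s_{i-1})|^q$ by splitting each increment according to the intervals of $\rho^m$ and using two facts: $y$ vanishes at every point of $\rho^m$, and $y$ is linear on each interval of $\rho^{m+1}$. First, inserting the points of $\rho^m$ into $\mathfrak P$ costs at most a factor $2^{q-1}$. If $s_{i-1}<w<s_i$ with $w\in\rho^m$, then $y(w)=0$, so
\[
|y(s_i)-y(s_{i-1})|^q\le 2^{q-1}\big(|y(s_i)-y(w)|^q+|y(w)-y(s_{i-1})|^q\big).
\]
Since $y$ vanishes at all such $w$, inserting several points of $\rho^m$ inside one interval of $\mathfrak P$ behaves the same way: only the two extreme pieces contribute, and the middle pieces have zero increment. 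So it suffices to bound $\sum_{[a,b]\in\mathfrak P'\cap I}|y(b)-y(a)|^q$ for a single interval $I\in\rho^m$ and an arbitrary partition $\mathfrak P'$ of $I$, and then sum over $I$. This accounts for the factor $2^{1-1/q}$ after taking $q$-th roots.

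On a fixed $I\in\rho^m$, $y$ is piecewise linear with at most $M:=M_m(\rho)$ linear pieces $J_1,\dots,J_r$, $r\le M$, and $y=0$ at the endpoints of $I$. Set $A_I:=\sum_{J\in\mathcal J_m(I)}|\Delta_J y|^q$, where $\Delta_J y$ is the increment of $y$ over $J$.

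The key estimate is that the $q$-variation of a piecewise linear function on $I$ is at most the $q$-variation computed with partition points at its kinks. First, refining a partition interval that lies inside a single linear piece $J$ only decreases the sum, since by convexity $|a+b|^q\ge|a|^q+|b|^q$ when $a,b$ have the same sign. Hence the worst partition of $I$ uses only kink points plus at most one extra point per linear piece. One then checks that the sum is controlled by $\sum|y(\text{kink}_j)-y(\text{kink}_{j-1})|$ taken to the power $q$ with grouping. Concretely, any increment $y(b)-y(a)$ is a sum over the at most $r$ pieces it crosses, with partial pieces dominated by full ones. Hölder's inequality gives
\[
|y(b)-y(a)|^q\le r^{q-1}\sum_{J\text{ met}}|\Delta_J y|^q.
\]
Each $J$ is met by at most two partition intervals whose increments involve only part of $J$; those partial contributions sum to at most $|\Delta_J y|^q$ by the convexity inequality above. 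Intervals of $\mathfrak P'$ lying inside one $J$ are handled by the same superadditivity.

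Combining these gives $\sum_{\mathfrak P'}\le M^{q-1}A_I$. Summing over $I$ and including the factor $2^{q-1}$ from inserting the points of $\rho^m$ then yields
\[
\sup_{\mathfrak P}[y]^{(q)}_{\mathfrak P}\le 2^{q-1}M^{q-1}\sum_{[u,v]\in\rho^{m+1}}|y(v)-y(u)|^q,
\]
which is \eqref{eq:block-qvar-general} after taking $q$-th roots.

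The main obstacle is the bookkeeping in the previous step. A partial piece of $J$ is shared between two partition intervals, and these must be charged without double-counting. I would handle this by writing the partial increments as $\lambda\Delta_J y$ and $(1-\lambda)\Delta_J y$. Since $\lambda^q+(1-\lambda)^q\le1$, the two partial charges to each $J$ together cost at most one copy of $|\Delta_J y|^q$.
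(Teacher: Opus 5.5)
Your proof is correct, and its outer structure matches the paper's. You insert the points of $\rho^m$ at a cost of $2^{q-1}$ (using $y=0$ there), localize to a single interval $I\in\rho^m$, and prove $\|y\|_{q\text{-}\cvar;I}^q\le M_m(\rho)^{q-1}\sum_{J\in\mathcal J_m(I)}|\Delta_J y|^q$.

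The difference is only in how you get this per-interval bound. The paper gets it in one line: $\|y\|_{q\text{-}\cvar;I}\le\operatorname{TV}(y;I)=\sum_{J\in\mathcal J_m(I)}|\Delta_J y|$, because $\sum|d_i|^q\le(\sum|d_i|)^q$ and $y$ is monotone on each child. It then applies H\"older's inequality once over the at most $M_m(\rho)$ children. Your charging argument instead applies H\"older within each test increment over the pieces it crosses. It then uses $\sum_i\lambda_i^q\le(\sum_i\lambda_i)^q=1$, where the $\lambda_i$ are the fractions of a child $J$ covered by the test intervals. This proves the same inequality with the same constant. It gains nothing beyond showing explicitly that the relevant factor is the number of children a single increment crosses, so the total-variation route is the more economical one.

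If you keep your version, tidy up three things:
\begin{itemize}
\item Write the H\"older display with the partial increments $\lambda\Delta_J y$, not the full $\Delta_J y$. As displayed, summing over test intervals would double-count a child met by several of them.
\item Allow a child to be cut into more than two pieces; the $\lambda$ bookkeeping handles this uniformly.
\item Drop the unproved heuristic that the ``worst partition uses only kink points plus at most one extra point per linear piece.'' Your final argument does not need it.
\end{itemize}
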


\begin{proof}
    Fix an interval $I$ of $\rho^m$. Since $y$ is piecewise linear along $\rho^{m+1}$,
    \[
        \|y\|_{q\text{-}\cvar;I} \le \operatorname{TV}(y;I) = \sum_{[u, v] \in \mathcal{J}_m(I)}|y(v)-y(u)|.
    \]
    By H\"older's inequality,
    \[
        \|y\|_{q\text{-}\cvar;I}^q \le M_m(\rho)^{q-1} \sum_{[u, v] \in \mathcal{J}_m(I)} |y(v)-y(u)|^q.
    \]
    When a test partition for the classical $q$-variation crosses a point of $\rho^m$, we insert that point and use the fact that $y$ vanishes there. The inequality $|a-b|^q\le 2^{q-1}(|a|^q+|b|^q)$ then yields
    \[
        \|y\|_{q\text{-}\cvar}^q \le 2^{q-1}\sum_{I\in\rho^m} \|y\|_{q\text{-}\cvar;I}^q.
    \]
    Combining the last two inequalities proves \eqref{eq:block-qvar-general}.
\end{proof}

\begin{proposition}[Subcritical interpolation]  \label{prop:subcritical}
    Let $\pi\in\Pi([0,T])$, let $\rho=(\pi^{n_m})_{m\ge 0}$ be a refining subsequence of $\pi$, and let $1<q<r<\infty$. Suppose that $x\in \mathcal{X}_\pi^q$. Then
    \begin{align}\label{eq:subcritical-bound}
        \|x\|_{r\text{-}\cvar}
        \le \|x_0^\rho\|_{r\text{-}\cvar} + 2^{1+\frac{q-1}{r}} \Big(\sup_{n\ge 0}[x]_{\pi^n}^{(q)}(T)\Big)^{\frac1r} \sum_{m=0}^{\infty} M_m(\rho)^{\frac{q-1}{r}} \|u_m^\rho\|_\infty^{1-\frac qr},
    \end{align}
    provided that the series on the right-hand side is finite. Moreover, for every $N\ge0$,
    \begin{align}\label{eq:subcritical-tail}
        \|x-x_N^\rho\|_{r\text{-}\cvar} \le 2^{1+\frac{q-1}{r}} \Big(\sup_{n\ge0}[x]_{\pi^n}^{(q)}(T)\Big)^{\frac1r} \sum_{m=N}^{\infty}M_m(\rho)^{\frac{q-1}{r}} \|u_m^\rho\|_\infty^{1-\frac qr},
    \end{align}
    provided that the series on the right-hand side is finite.
\end{proposition}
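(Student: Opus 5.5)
We telescope $x$ along the skeleton and bound each interpolation block separately in $r$-variation. Since $\rho$ is a refining subsequence of $\pi$ with vanishing mesh, $x_m^\rho\to x$ uniformly by continuity of $x$. Indeed, $\|x-x_m^\rho\|_\infty\le\omega_x(|\rho^m|)$ by the convex-combination argument behind \eqref{eq:block-modulus}. Hence, for every $M> N$,
\[
    x_M^\rho-x_N^\rho=\sum_{m=N}^{M-1}u_m^\rho ,
\]
and the triangle inequality for the seminorm $\|\cdot\|_{r\text{-}\cvar}$ gives $\|x_M^\rho-x_N^\rho\|_{r\text{-}\cvar}\le\sum_{m=N}^{M-1}\|u_m^\rho\|_{r\text{-}\cvar}$. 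Letting $M\to\infty$, Lemma~\ref{lem:pvar-lower-semicontinuity} applied to $f_M=x_M^\rho-x_N^\rho\to x-x_N^\rho$ yields
\[
    \|x-x_N^\rho\|_{r\text{-}\cvar}\le\sum_{m=N}^\infty\|u_m^\rho\|_{r\text{-}\cvar}.
\]
Taking $N=0$ and using the triangle inequality once more, \eqref{eq:subcritical-bound} will follow from \eqref{eq:subcritical-tail}. So everything reduces to a single-block estimate.

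For a fixed block, I would apply Lemma~\ref{lem:interpolation-inequality} with exponents $q<r$:
\[
    \|u_m^\rho\|_{r\text{-}\cvar}\le (2\|u_m^\rho\|_\infty)^{1-\frac qr}\|u_m^\rho\|_{q\text{-}\cvar}^{\frac qr}.
\]
Since $u_m^\rho$ is piecewise linear along $\rho^{m+1}$ and vanishes on $\rho^m$, Lemma~\ref{lem:block-qvar} applies with $y=u_m^\rho$. It gives $\|u_m^\rho\|_{q\text{-}\cvar}^q\le 2^{q-1}M_m(\rho)^{q-1}\sum_{[u,v]\in\rho^{m+1}}|u_m^\rho(v)-u_m^\rho(u)|^q$. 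It remains to bound the last sum by a multiple of $\sup_n[x]_{\pi^n}^{(q)}(T)$.

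\textbf{Main obstacle.} The main point, and the step requiring care, is this comparison of the increments of $u_m^\rho$ with those of $x$. On $[u,v]\in\rho^{m+1}$ the function $x_{m+1}^\rho$ agrees with $x$ at the endpoints. Writing $[a,b]\in\rho^m$ for the parent interval and $s=(b-a)^{-1}(x(b)-x(a))$ for its slope, we get
\[
    u_m^\rho(v)-u_m^\rho(u)=(x(v)-x(u))-s\,(v-u).
\]
Hence
\[
    |u_m^\rho(v)-u_m^\rho(u)|^q\le 2^{q-1}\big(|x(v)-x(u)|^q+|x(b)-x(a)|^q\big((v-u)/(b-a)\big)^q .
\]
Summing over the children of $[a,b]$ and using $\sum (v-u)^q/(b-a)^q\le1$ (valid since $q\ge1$), the total is at most $2^{q-1}\big([x]_{\rho^{m+1}}^{(q)}(T)+[x]_{\rho^m}^{(q)}(T)\big)\le 2^{q}\sup_n[x]_{\pi^n}^{(q)}(T)$, because $\rho^m$ and $\rho^{m+1}$ are members of $\pi$. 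Equivalently, this is the convexity fact that the linear interpolant has $q$-variation sum on $[a,b]$ at most $|x(b)-x(a)|^q$.

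Collecting constants, $\|u_m^\rho\|_{q\text{-}\cvar}^{q/r}\le (2^{2q-1})^{1/r}M_m(\rho)^{\frac{q-1}r}(\sup_n[x]^{(q)}_{\pi^n})^{1/r}$. Combined with the factor $2^{1-q/r}$, this gives a constant $2^{1+\frac{q-1}{r}}$, matching the statement. If the constant came out slightly off, I would tighten the increment bound using the convexity estimate above in place of the crude $2^{q-1}$ split. Summing over $m\ge N$ then gives \eqref{eq:subcritical-tail}, and with it \eqref{eq:subcritical-bound}, whenever the series is finite.
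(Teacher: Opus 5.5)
Your proposal is correct and follows essentially the same route as the paper. Both use the increment identity for $u_m^\rho$ with the $2^{q-1}$ split and $\sum_J(|J|/|I|)^q\le1$, then Lemma~\ref{lem:block-qvar} and Lemma~\ref{lem:interpolation-inequality}, then telescoping and lower semicontinuity via Lemma~\ref{lem:pvar-lower-semicontinuity}; your constants combine exactly to $2^{1+\frac{q-1}{r}}$, so the hedge about tightening them is unnecessary.
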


\begin{proof}
    Set
    \[
        |x|^{(q)}_{\pi} := \sup_{n\ge0} \big([x]_{\pi^n}^{(q)}(T)\big)^{\frac1q} < \infty.
    \]
    Fix $m\ge 0$, an interval $I=[a,b]$ of $\rho^m$, and a child interval $J=[s,t]\subset I$ of $\rho^{m+1}$. By \eqref{eq:linear-interpolation},
    \begin{equation}\label{eq:block-increment}
        u_m^\rho(t)-u_m^\rho(s) = x(t)-x(s)-\frac{t-s}{b-a}\big(x(b)-x(a)\big).
    \end{equation}
    Consequently,
    \begin{align}
        \sum_{[s,t]\in\rho^{m+1}} |u_m^\rho(t)-u_m^\rho(s)|^q
        &\le 2^{q-1}\bigg( [x]_{\rho^{m+1}}^{(q)}(T) + \sum_{[a,b]\in\rho^m}|x(b)-x(a)|^q \sum_{J\in\mathcal{J}_m([a,b])} \Big(\frac{|J|}{b-a}\Big)^q \bigg)  \nonumber
        \\
        &\le 2^q\big(|x|^{(q)}_{\pi}\big)^q. \label{eq:nodal-qenergy}
    \end{align}
    Here, the last inequality used
    \[
        \sum_{J\in\mathcal{J}_m(I)} \Big(\frac{|J|}{|I|}\Big)^q
        \le\sum_{J\in\mathcal{J}_m(I)}\frac{|J|}{|I|}=1.
    \]
    Lemma~\ref{lem:block-qvar} applied to $u_m^\rho$ and \eqref{eq:nodal-qenergy} imply
    \begin{equation}\label{eq:um-qvar}
        \|u_m^\rho\|_{q\text{-}\cvar} \le 2^{2-\frac1q} M_m(\rho)^{1-\frac1q} |x|^{(q)}_{\pi}.
    \end{equation}
    
    Applying Lemma~\ref{lem:interpolation-inequality} to $u_m^\rho$, followed by \eqref{eq:um-qvar}, gives
    \begin{equation}\label{eq:um-rvar}
        \|u_m^\rho\|_{r\text{-}\cvar} \le 2^{1+\frac{q-1}{r}} M_m(\rho)^{\frac{q-1}{r}} \big(|x|^{(q)}_{\pi}\big)^{\frac qr} \|u_m^\rho\|_\infty^{1-\frac qr}.
    \end{equation}
    For integers $K>N$, we have the telescoping representation
    \[
        x_K^\rho-x_N^\rho = \sum_{m=N}^{K-1}u_m^\rho.
    \]
    Hence, the triangle inequality for the classical $r$-variation seminorm and \eqref{eq:um-rvar} yield
    \[
        \|x_K^\rho-x_N^\rho\|_{r\text{-}\cvar}
        \le 2^{1+\frac{q-1}{r}}
        \Big(\sup_{n\ge0}[x]_{\pi^n}^{(q)}(T)\Big)^{\frac1r}
        \sum_{m=N}^{K-1}M_m(\rho)^{\frac{q-1}{r}}
        \|u_m^\rho\|_\infty^{1-\frac qr}.
    \]
    Since $x_K^\rho\to x$ uniformly as $K\to\infty$, the lower semicontinuity of $r$-variation from Lemma~\ref{lem:pvar-lower-semicontinuity} gives \eqref{eq:subcritical-tail}. Taking $N=0$ and using $x=x_0^\rho+(x-x_0^\rho)$ then gives \eqref{eq:subcritical-bound}.
\end{proof}

\begin{theorem}[Finitely refining criterion] \label{thm:bounded-branching}
    Let $\pi\in\Pi([0,T])$ contain a finitely refining skeleton $\rho=(\pi^{n_m})_{m\ge0}$. If $x\in C^0([0,T])$ satisfies
    \begin{equation}\label{eq:all-powers-summability}
        \sum_{m=0}^{\infty} \big(\|u_m^\rho\|_\infty\big)^\delta < \infty \qquad \text{for every } \delta > 0,
    \end{equation}
    then $p_\pi(x)=p_{\cvar}(x)$.
\end{theorem}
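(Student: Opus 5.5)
Since \eqref{eq:easy-direction} already gives $p_\pi(x)\le p_{\cvar}(x)$, it suffices to prove $p_{\cvar}(x)\le p_\pi(x)$. If $p_\pi(x)=\infty$ there is nothing to show, so assume $p_\pi(x)<\infty$. We will show that $\|x\|_{r\text{-}\cvar}<\infty$ for every $r>p_\pi(x)$ with $r>1$. Letting $r\downarrow p_\pi(x)$ then gives $p_{\cvar}(x)\le p_\pi(x)$. If $p_\pi(x)=1$, we take $r>1$ arbitrary, and the infimum in the definition of $p_{\cvar}$ is still $1$.

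Fix such an $r$ and choose $q$ with $\max\{1,p_\pi(x)\}<q<r$. By \eqref{eq:index-threshold}, $\limsup_n[x]_{\pi^n}^{(q)}(T)=0$. Each individual sum $[x]_{\pi^n}^{(q)}(T)$ is finite because $\pi^n$ is a finite partition, so $\sup_{n\ge0}[x]_{\pi^n}^{(q)}(T)<\infty$, i.e.\ $x\in\mathcal X_\pi^q$. We then apply Proposition~\ref{prop:subcritical} with the given skeleton $\rho$ and the exponents $q<r$. The term $\|x_0^\rho\|_{r\text{-}\cvar}$ is finite because $x_0^\rho$ is piecewise linear with finitely many pieces, hence of bounded variation, and $\|\cdot\|_{r\text{-}\cvar}\le\|\cdot\|_{1\text{-}\cvar}$ when $r\ge 1$.

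It remains to check that the series in \eqref{eq:subcritical-bound} converges. Set $M:=\sup_m M_m(\rho)<\infty$, which is finite by the finitely refining assumption. Then $M_m(\rho)^{(q-1)/r}\le \max\{1,M\}^{(q-1)/r}$ uniformly in $m$. The series is therefore bounded by a constant times $\sum_m\|u_m^\rho\|_\infty^{\delta}$ with $\delta=1-q/r>0$, which is finite by \eqref{eq:all-powers-summability}. Hence $\|x\|_{r\text{-}\cvar}<\infty$, and the proof is complete.

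The only point that needs care is this choice of exponents. Hypothesis \eqref{eq:all-powers-summability} must hold for every $\delta>0$, because $\delta=1-q/r$ can be arbitrarily small when $r$ is close to $p_\pi(x)$. Uniformity of the branching bound is also essential, since it is what lets the factor $M_m(\rho)^{(q-1)/r}$ be absorbed into a constant. All the analytic work was already done in Proposition~\ref{prop:subcritical}, so no step here should be a genuine obstacle.
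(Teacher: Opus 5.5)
Your proposal is correct and follows the paper's proof essentially verbatim: reduce to $p_{\cvar}(x)\le p_\pi(x)$, fix $r>p_\pi(x)$, pick $q\in(p_\pi(x),r)$ so that $x\in\mathcal X_\pi^q$, and apply Proposition~\ref{prop:subcritical} with the uniform branching bound and \eqref{eq:all-powers-summability} at $\delta=1-q/r$. Your extra remarks (the $\max\{1,p_\pi(x)\}$ and the finiteness of $\|x_0^\rho\|_{r\text{-}\cvar}$) are harmless; the first is redundant since $p_\pi(x)\ge1$ by definition.
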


\begin{proof}
    We use the convention $\inf \emptyset =\infty$. Due to \eqref{eq:easy-direction}, it is enough to show $p_\pi(x) \ge p_{\cvar}(x)$. If $p_\pi(x)=\infty$, there is nothing to prove. Suppose that $p_\pi(x)<\infty$. Fix $r>p_\pi(x)$ and choose $q$ such that $p_\pi(x)<q<r$. Then $x\in \mathcal{X}_\pi^q$ by \eqref{eq:index-threshold}. Since $\rho$ is a finitely refining skeleton, we may set
    \[
        M_\rho:=\sup_{m\ge0}M_m(\rho)<\infty.
    \]
    Applying \eqref{eq:all-powers-summability} with
    $\delta=1-q/r>0$, Proposition~\ref{prop:subcritical} yields $\|x\|_{r\text{-}\cvar}<\infty$, thus $p_{\cvar}(x)\le r$. Sending $r\downarrow p_\pi(x)$ proves the desired inequality.
\end{proof}

\begin{corollary}[Positive H\"older regularity]\label{cor:holder-skeleton}
    Let $\pi\in\Pi([0,T])$ contain a regular skeleton. Then, for every $\alpha\in(0,1]$ and $x\in C^{\alpha}([0,T])$, we have the identity $p_\pi(x)=p_{\cvar}(x)$.
\end{corollary}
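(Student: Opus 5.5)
The plan is to verify the summability hypothesis \eqref{eq:all-powers-summability} of Theorem~\ref{thm:bounded-branching} for the regular skeleton $\rho=(\pi^{n_m})_{m\ge0}$ contained in $\pi$. Once that is done, the identity $p_\pi(x)=p_{\cvar}(x)$ follows immediately from Theorem~\ref{thm:bounded-branching}, since a regular skeleton is by definition a finitely refining skeleton.

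The key step is to bound the sup-norms of the interpolation blocks. Fix $x\in C^\alpha([0,T])$ with $\alpha\in(0,1]$. For every $h>0$ the modulus of continuity satisfies
\[
    \omega_x(h)\le |x|_{C^\alpha}h^\alpha .
\]
Combining this with \eqref{eq:block-modulus} gives
\[
    \|u_m^\rho\|_\infty\le |x|_{C^\alpha}\,|\rho^m|^\alpha .
\]
Now fix $\delta>0$. Raising the previous bound to the power $\delta$ yields
\[
    \sum_{m=0}^\infty\|u_m^\rho\|_\infty^\delta\le |x|_{C^\alpha}^\delta\sum_{m=0}^\infty|\rho^m|^{\alpha\delta}.
\]
The right-hand side is finite by \eqref{con:regular-skeleton} applied with $\gamma=\alpha\delta>0$. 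If $|x|_{C^\alpha}=0$, then $x$ is constant, every block vanishes, and the sum is trivially zero. So \eqref{eq:all-powers-summability} holds for every $\delta>0$.

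There is no real obstacle. The only point to check is that the exponent $\gamma=\alpha\delta$ can be made arbitrarily small, and this is exactly why Definition~\ref{def:regular-skeleton} requires summability of $|\rho^m|^\gamma$ for every $\gamma>0$ rather than for a single one. Positivity of $\alpha$ is what makes $\gamma>0$, so the argument uses only the Hölder modulus bound together with this feature of regular skeletons. Applying Theorem~\ref{thm:bounded-branching} then concludes the proof.
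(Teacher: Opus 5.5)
Your proposal is correct and follows essentially the same route as the paper: it bounds $\|u_m^\rho\|_\infty\le\omega_x(|\rho^m|)\le|x|_{C^\alpha}|\rho^m|^\alpha$ and uses regularity of the skeleton with $\gamma=\alpha\delta$ to verify \eqref{eq:all-powers-summability}. It then invokes Theorem~\ref{thm:bounded-branching}. Your separate treatment of $|x|_{C^\alpha}=0$ is harmless but unnecessary, since the bound already covers that case.
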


\begin{proof}
    Let $\rho=(\pi^{n_m})_{m\ge0}$ be a regular skeleton of $\pi$. Fix $\delta>0$. Since $x\in C^{\alpha}([0,T])$, the inequality \eqref{eq:block-modulus} gives
    \[
        \|u_m^\rho\|_\infty \le \omega_x(|\rho^m|) \le |x|_{C^{\alpha}}|\rho^m|^\alpha.
    \]
    Consequently,
    \[
        \sum_{m=0}^\infty \big(\|u_m^\rho\|_\infty\big)^\delta
        \le |x|_{C^{\alpha}}^\delta \sum_{m=0}^\infty |\rho^m|^{\alpha\delta} <\infty,
    \]
    where the last inequality follows from
    \eqref{con:regular-skeleton} with
    $\gamma=\alpha\delta>0$. Hence the condition \eqref{eq:all-powers-summability} holds and
    Theorem~\ref{thm:bounded-branching} applies.
\end{proof}

We now show that the hypotheses of Theorem~\ref{thm:main-intro} produce a regular skeleton.

\begin{lemma}[Construction of a regular skeleton]\label{lem:skeleton-construction}
    Let $\pi\in\Pi([0,T])$ be balanced with constant $c$, and suppose that $B_\pi$ in \eqref{eq:no-large-jump-intro} is finite. Then $\pi$ contains a regular skeleton.
\end{lemma}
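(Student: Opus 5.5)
We will construct the skeleton greedily by mesh halving. Set $n_0:=0$. Given $n_m$, let $n_{m+1}$ be the smallest index $n>n_m$ with $|\pi^n|\le \tfrac12|\pi^{n_m}|$. Such an index exists because $|\pi^n|\to0$. By construction $|\rho^{m+1}|\le\tfrac12|\rho^m|$, so Remark~\ref{rem:regular-skeleton} with $\lambda=1/2$ gives \eqref{con:regular-skeleton}. It remains to check that $\rho=(\pi^{n_m})_{m\ge0}$ is finitely refining, i.e.\ to bound $M_m(\rho)$ uniformly in $m$.

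The key point is a two-sided comparison of meshes. By minimality of $n_{m+1}$, the index $n_{m+1}-1$ (which is $\ge n_m$) satisfies $|\pi^{n_{m+1}-1}|>\tfrac12|\pi^{n_m}|$. This holds trivially if $n_{m+1}-1=n_m$. Then \eqref{eq:no-large-jump-intro} yields
\[
    |\rho^{m+1}|=|\pi^{n_{m+1}}|\ge B_\pi^{-1}|\pi^{n_{m+1}-1}|>\frac{1}{2B_\pi}|\rho^m|.
\]
Balancedness \eqref{eq:balanced-intro} then gives a lower bound on the shortest interval of $\rho^{m+1}$:
\[
    \underline{\rho^{m+1}}\ge c^{-1}|\rho^{m+1}|>\frac{1}{2cB_\pi}|\rho^m|.
\]

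Now fix an interval $I\in\rho^m$. Its children in $\mathcal J_m(I)$ are disjoint intervals of $\rho^{m+1}$ covering $I$. Each child has length at least $\underline{\rho^{m+1}}$, while $|I|\le|\rho^m|$. Hence
\[
    \#\mathcal J_m(I)\le\frac{|\rho^m|}{\underline{\rho^{m+1}}}<2cB_\pi,
\]
so $\sup_m M_m(\rho)\le 2cB_\pi<\infty$.

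The only delicate step is obtaining the lower bound on $|\rho^{m+1}|$ relative to $|\rho^m|$. This is exactly where the choice of the \emph{first} index achieving halving is combined with $B_\pi<\infty$. Without that assumption, the mesh could drop by an arbitrarily large factor in a single step, and the branching number $M_m(\rho)$ would be unbounded; this is the mechanism behind the counterexamples mentioned in the introduction.
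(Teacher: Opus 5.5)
Your proposal is correct and follows essentially the same route as the paper: you choose $n_{m+1}$ greedily as the first index that halves the mesh, obtain the two-sided comparison $\frac{1}{2B_\pi}|\rho^m|<|\rho^{m+1}|\le\frac12|\rho^m|$ from minimality together with $B_\pi<\infty$, and then count children using balancedness. Your bound $M_m(\rho)<2cB_\pi$ coincides with the paper's $\sup_m M_m(\rho)\le\lceil 2cB_\pi\rceil$ up to rounding.
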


\begin{proof}
    Starting from $n_0=0$, define inductively
    \begin{equation}\label{eq:skeleton-indices}
        n_{m+1} :=\min\Big\{n>n_m: |\pi^n| \le \frac12 |\pi^{n_m}|\Big\}.
    \end{equation}
    The index $n_{m+1}$ exists because $|\pi^n|\to 0$. By the minimality of $n_{m+1}$,
    \[
        |\pi^{n_{m+1}-1}|>\frac12|\pi^{n_m}|.
    \]
    Indeed, this is immediate if $n_{m+1}=n_m+1$, while otherwise it follows because $n_m<n_{m+1}-1<n_{m+1}$. On the other hand, by the definition of $B_\pi$,
    \[
        |\pi^{n_{m+1}}| \ge \frac1{B_\pi}|\pi^{n_{m+1}-1}| > \frac1{2B_\pi}|\pi^{n_m}|.
    \]
    Consequently,
    \begin{equation}\label{eq:skeleton-mesh-comparison}
        \frac{1}{2B_\pi}|\pi^{n_m}| < |\pi^{n_{m+1}}| \le \frac12 |\pi^{n_m}|.
    \end{equation}
    Set $\rho^m:=\pi^{n_m}$. From \eqref{eq:skeleton-mesh-comparison},
    \[
        |\rho^{m+1}|\le\frac12|\rho^m|, \qquad \text{hence} \qquad  |\rho^m|\le 2^{-m}|\rho^0|.
    \]
    Therefore, for every $\gamma>0$,
    \[
        \sum_{m=0}^\infty|\rho^m|^\gamma \le |\rho^0|^\gamma \sum_{m=0}^\infty2^{-m\gamma} <\infty.
    \]
    
    Let $I$ be an interval of $\rho^m$. Every interval $J$ of $\rho^{m+1}$ contained in $I$ has length at least
    \[
        \underline{\rho^{m+1}} \ge \frac{|\rho^{m+1}|}{c} > \frac{|\rho^m|}{2cB_\pi}.
    \]
    Since $|I|\le|\rho^m|$, the number of such intervals of $J$ in $I$ is bounded by $2cB_\pi$. This proves that $\sup_{m\ge 0}M_m(\rho)\le\lceil 2cB_\pi\rceil$, and $\rho$ is a regular skeleton.
\end{proof}

The proof of Theorem~\ref{thm:main-intro} is now immediate.

\begin{proof}[Proof of Theorem~\ref{thm:main-intro}]
    Lemma~\ref{lem:skeleton-construction} and Corollary~\ref{cor:holder-skeleton} give $p_\pi(x)=p_{\cvar}(x)$. The dyadic partition sequence $\T$ is itself a regular skeleton: each interval of $\T^m$ has exactly two children, and $|\T^m|=T2^{-m}$. Thus, for every $\gamma>0$,
    \[
        \sum_{m=0}^\infty|\T^m|^\gamma = T^\gamma\sum_{m=0}^\infty2^{-m\gamma} <\infty.
    \]
    Corollary~\ref{cor:holder-skeleton} therefore gives $p_\T(x)=p_{\cvar}(x)$.
\end{proof}

\medskip

\subsection{Beyond H\"older regularity} \label{subsec:beyond-holder}

Positive H\"older regularity is a convenient sufficient condition in
Corollary~\ref{cor:holder-skeleton}, but it is not the intrinsic assumption in Theorem~\ref{thm:bounded-branching}; the relevant condition is the summability of the interpolation errors along a finitely refining skeleton. Thus, the purpose of this subsection is to establish the result of Theorem~\ref{thm:main-intro} under a weaker regularity condition on $x$.

We first recall the notation \eqref{eq:modulus} for the modulus of continuity and provide the following definition.

\begin{definition}[All-orders Dini modulus] \label{def:all-orders-dini}
    We say that $x\in C^0([0,T])$ has an all-orders Dini modulus if
    \begin{equation}\label{eq:all-orders-dini}
        \int_0^T\omega_x(h)^\delta\,\frac{dh}{h}<\infty, \qquad \text{for every }\delta>0.
    \end{equation}
\end{definition}

If $x\in C^\alpha([0,T])$ for some $\alpha \in (0, 1]$, then for every $\delta>0$,
\[
    \int_0^T\omega_x(h)^\delta\,\frac{dh}{h} \le |x|_{C^\alpha}^\delta \int_0^T h^{\alpha\delta-1}\,dh =\frac{T^{\alpha\delta}}{\alpha\delta}|x|_{C^\alpha}^\delta < \infty.
\]
Thus, every positively H\"older continuous path has an all-orders Dini modulus.

\begin{proposition}[Geometric skeletons]\label{prop:dini-geometric}
    Let $\pi\in\Pi([0,T])$ contain a finitely refining skeleton $\rho=(\pi^{n_m})_{m\ge0}$ for which there exists $\lambda\in(0,1)$ such that
    \begin{equation}\label{eq:geometric-skeleton-again}
        |\rho^{m+1}|\le\lambda|\rho^m|, \qquad m\ge0.
    \end{equation}
    If $x$ has an all-orders Dini modulus, then $p_\pi(x)=p_{\cvar}(x)$.
\end{proposition}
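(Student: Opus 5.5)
The plan is to reduce everything to Theorem~\ref{thm:bounded-branching}. Since $\rho$ is assumed to be a finitely refining skeleton, it suffices to verify the summability condition \eqref{eq:all-powers-summability}: for every $\delta>0$, $\sum_m \|u_m^\rho\|_\infty^\delta<\infty$. By \eqref{eq:block-modulus}, $\|u_m^\rho\|_\infty\le\omega_x(|\rho^m|)$, so it is enough to show
\[
    \sum_{m=0}^\infty \omega_x(|\rho^m|)^\delta<\infty \qquad\text{for every }\delta>0.
\]

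To control this series by the Dini integral \eqref{eq:all-orders-dini}, I will use that $\omega_x$ is nondecreasing and compare each term with an integral over a dyadic-type window. Set $h_m:=|\rho^m|$. By \eqref{eq:geometric-skeleton-again}, $h_{m+1}\le\lambda h_m$, so $h_m\le\lambda^m h_0\le\lambda^m T$. Since $h_m$ decreases at least geometrically, I compare with the deterministic scales $a_k:=\lambda^kT$. Monotonicity of $\omega_x$ gives
\[
    \omega_x(h_m)\le\omega_x(a_m).
\]
For $h\in[a_{m+1},a_m]$ we have $\omega_x(h)\ge\omega_x(a_{m+1})$, hence
\[
    \int_{a_{m+1}}^{a_m}\omega_x(h)^\delta\,\frac{dh}{h}\ \ge\ \omega_x(a_{m+1})^\delta\log(1/\lambda).
\]
Summing over $m$ yields
\[
    \sum_{m\ge1}\omega_x(a_m)^\delta\le\log(1/\lambda)^{-1}\int_0^T\omega_x(h)^\delta\,\frac{dh}{h}<\infty.
\]
Adding the single finite term $m=0$ (bounded by $\omega_x(T)^\delta\le(2\|x\|_\infty)^\delta$) gives the desired summability.

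Combining these steps, \eqref{eq:all-powers-summability} holds for every $\delta>0$, and Theorem~\ref{thm:bounded-branching} gives $p_\pi(x)=p_{\cvar}(x)$. No step is a real obstacle. The only point requiring care is the passage from the possibly irregular meshes $h_m$ to the geometric scales $a_m$: this relies on the monotonicity of $\omega_x$ together with the one-sided bound $h_m\le\lambda^mT$. No lower bound on $h_m$ is needed, because we only need an upper bound on $\omega_x(h_m)$.
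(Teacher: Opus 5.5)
Your proof is correct and follows the paper's route: you reduce to Theorem~\ref{thm:bounded-branching} via \eqref{eq:block-modulus}, then bound $\sum_m\omega_x(|\rho^m|)^\delta$ by the Dini integral, using the monotonicity of $\omega_x$ on logarithmic windows of length $\log(1/\lambda)$. The only cosmetic difference is the choice of windows. You compare with the fixed scales $\lambda^mT$, using $|\rho^m|\le\lambda^mT$, whereas the paper uses the windows $[|\rho^m|,|\rho^m|/\lambda]$, whose disjointness follows directly from \eqref{eq:geometric-skeleton-again}.
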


\begin{proof}
    Fix $\delta>0$. For $m\ge1$, the monotonicity of the modulus of continuity gives
    \[
        \omega_x(|\rho^m|)^\delta\log\frac1\lambda \le \int_{|\rho^m|}^{|\rho^m|/\lambda} \omega_x(h)^\delta\,\frac{dh}{h}.
    \]
    By \eqref{eq:geometric-skeleton-again}, the intervals $[|\rho^m|,|\rho^m|/\lambda]$, $m\ge1$, have pairwise disjoint interiors and are contained in $(0,T]$. Hence
    \[
        \sum_{m=1}^\infty\omega_x(|\rho^m|)^\delta \le \frac1{\log(1/\lambda)} \int_0^T\omega_x(h)^\delta\,\frac{dh}{h}<\infty.
    \]
    Since the initial term $\omega_x(|\rho^0|)^\delta$ is finite, the function $x\in C^0([0,T])$ satisfies
    \begin{equation}\label{eq:discrete-modulus-condition}
        \sum_{m=0}^\infty \omega_x(|\rho^m|)^\delta<\infty \qquad \text{for every }\delta>0.
    \end{equation}
    By \eqref{eq:block-modulus}, $\|u_m^\rho\|_\infty^\delta \le \omega_x(|\rho^m|)^\delta$. Thus \eqref{eq:discrete-modulus-condition} implies \eqref{eq:all-powers-summability}, and Theorem~\ref{thm:bounded-branching} applies.
\end{proof}

\begin{corollary}[All-orders Dini extension of Theorem~\ref{thm:main-intro}]    \label{cor:dini-main}
    Let $\pi$ satisfy the assumptions of Theorem~\ref{thm:main-intro}. If $x\in C^0([0,T])$ has an all-orders Dini modulus, then $p_\pi(x)=p_{\cvar}(x)=p_\T(x)$.
\end{corollary}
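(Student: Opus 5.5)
The plan is to reduce Corollary~\ref{cor:dini-main} directly to Proposition~\ref{prop:dini-geometric}, applied twice: once to $\pi$ and once to the dyadic sequence $\T$. In each case the task is to exhibit a finitely refining skeleton whose mesh contracts at a geometric rate.

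For $\pi$, I will reuse the skeleton $\rho=(\pi^{n_m})_{m\ge0}$ built in the proof of Lemma~\ref{lem:skeleton-construction}, with indices chosen by \eqref{eq:skeleton-indices}. That proof already establishes two facts:
\begin{itemize}
    \item $\sup_{m\ge0}M_m(\rho)\le\lceil 2cB_\pi\rceil<\infty$, which uses balancedness with constant $c$ together with the finiteness of $B_\pi$;
    \item by \eqref{eq:skeleton-mesh-comparison}, $|\rho^{m+1}|\le\tfrac12|\rho^m|$ for every $m$.
\end{itemize}
Hence $\rho$ is a finitely refining skeleton satisfying \eqref{eq:geometric-skeleton-again} with $\lambda=1/2$. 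Since $x$ has an all-orders Dini modulus, Proposition~\ref{prop:dini-geometric} yields $p_\pi(x)=p_{\cvar}(x)$.

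For $\T$, I take the whole sequence as its own skeleton, with $n_m=m$. Each interval of $\T^m$ has exactly two children, so $M_m(\T)=2$, and $|\T^{m+1}|=\tfrac12|\T^m|$. Proposition~\ref{prop:dini-geometric} therefore applies again with $\lambda=1/2$ and gives $p_\T(x)=p_{\cvar}(x)$. Combining the two identities proves the corollary.

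There is essentially no obstacle here, since the argument only re-reads the estimates already recorded in the proof of Lemma~\ref{lem:skeleton-construction}. The one thing to check is that this lemma really supplies the geometric decay \eqref{eq:geometric-skeleton-again}, and not merely the weaker summability \eqref{con:regular-skeleton}. The Dini argument in Proposition~\ref{prop:dini-geometric} needs the intervals $[|\rho^m|,|\rho^m|/\lambda]$ to have pairwise disjoint interiors, and the bound $|\rho^{m+1}|\le\tfrac12|\rho^m|$ provides exactly that.
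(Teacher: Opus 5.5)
Your proposal is correct and follows the paper's proof: both reuse the skeleton from Lemma~\ref{lem:skeleton-construction}, which satisfies $|\rho^{m+1}|\le\frac12|\rho^m|$ and has bounded branching, and apply Proposition~\ref{prop:dini-geometric} to it and to $\T$ itself (two children per interval, ratio $1/2$). Your check that geometric decay, rather than mere summability, is needed for the disjoint scale intervals in the Dini argument is exactly the point the paper relies on.
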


\begin{proof}
    The skeleton constructed in Lemma~\ref{lem:skeleton-construction} satisfies
    \[
        |\rho^{m+1}|\le\frac12|\rho^m|.
    \]
    Proposition~\ref{prop:dini-geometric} therefore yields $p_\pi(x)=p_{\cvar}(x)$. The dyadic partition sequence is itself a finitely refining geometric skeleton with ratio $1/2$, and the same proposition gives $p_\T(x)=p_{\cvar}(x)$.
\end{proof}

The following example gives a function which has an all-orders Dini modulus but fails to be H\"older continuous for every positive exponent. Consequently, Corollary~\ref{cor:dini-main} genuinely extends Theorem~\ref{thm:main-intro} beyond the union of the positive H\"older classes.

\begin{example} [The extension is strictly beyond H\"older regularity]\label{ex:non-holder-modulus}
    Let $\beta\in(0,1)$ and define $x_\beta\in C^0([0,T])$ by
    \begin{equation}\label{eq:stretched-log-modulus}
        x_\beta(0):=0,\qquad x_\beta(t):=\exp\bigg[-\Big(\log\frac{eT}{t}\Big)^\beta\bigg], \qquad 0<t\le T.
    \end{equation}
    For every $\delta>0$, the change of variables $u=\log(eT/h)$ gives
    \begin{equation}\label{eq:stretched-log-dini}
        \int_0^T x_\beta(h)^\delta\,\frac{dh}{h} = \int_1^\infty e^{-\delta u^\beta}\,du < \infty.
    \end{equation}
    Moreover, $x_\beta$ is increasing and concave on $[0,T]$. Indeed, writing
    \[
        L(t):=\log\frac{eT}{t}, \qquad 0<t\le T,
    \]
    a direct calculation gives
    \[
        x_\beta'(t) = \frac{\beta x_\beta(t)}{t}L(t)^{\beta-1}>0, \qquad x_\beta''(t) = \frac{\beta x_\beta(t)}{t^2}L(t)^{\beta-2} \big(\beta L(t)^\beta-L(t)+1-\beta\big).
    \]
    Since $L(t)\ge1$ and $0<\beta<1$, one has
    \[
        \beta L(t)^\beta-L(t)+1-\beta \le \beta L(t)-L(t)+1-\beta = (1-\beta)(1-L(t)) \le 0,
    \]
    hence $x_\beta''(t)\le0$ for $0<t\le T$.

    Since every increasing concave function vanishing at zero is subadditive, it follows that
    \[
        \omega_{x_\beta}(h)=x_\beta(h), \qquad 0\le h\le T.
    \]
    Hence, $x_\beta$ satisfies \eqref{eq:all-orders-dini}. On the other hand, for every $\alpha>0$,
    \begin{equation*}
        \frac{x_\beta(h)}{h^\alpha}\longrightarrow\infty \qquad\text{as } h\downarrow0.
    \end{equation*}
    Thus $x_\beta$ does not belong to $C^\alpha([0,T])$ for any $\alpha>0$.
\end{example}

\begin{remark}
    For a general regular skeleton in the sense of Definition~\ref{def:regular-skeleton}, the all-orders Dini condition need not imply \eqref{eq:discrete-modulus-condition}; the skeleton may sample many nearby logarithmic scales. The discrete condition \eqref{eq:discrete-modulus-condition} is therefore the appropriate general statement, while Proposition~\ref{prop:dini-geometric} uses the additional geometric separation of scales.
\end{remark}

\medskip

\subsection{Sharpness of the partition assumptions}    \label{subsec:counterexamples}

We show in this subsection that both assumptions on the partition sequence in Theorem~\ref{thm:main-intro}, namely the balancedness condition and condition \eqref{eq:no-large-jump-intro}, are meaningful. First, a sparse subsequence of dyadic partitions is perfectly balanced, but may fail \eqref{eq:no-large-jump-intro}; along such a sequence, the variation index can be equal to one even when the dyadic variation index is any prescribed number $s>1$. Second, if balancedness is removed, one may retain the exact mesh relation $|\pi^n|=2^{-n}$ and nevertheless force the variation index to be one by inserting sufficiently many small intervals adapted to the function.

For simplicity, we shall work on $[0,1]$. Let $\T^m=\{k2^{-m}:0\le k\le 2^m\}$ denote the dyadic partition at level $m$. In what follows, we introduce the Faber--Schauder representation of continuous functions on $[0,1]$ along $\T = (\T^m)_{m \ge 0}$.

We first consider the Haar basis for each $m \ge 0$ and $k \in I_m := \{0, 1, \cdots, 2^m -1\}$
\begin{equation}\label{Eq:Haar}
    \psi^{\T}_{m, k}(t) := 2^{\frac{m}{2}}\psi(2^m t-k), \quad t \in [0, 1], \qquad \text{where} \qquad
    \psi(t) :=
    \begin{cases}
        ~1, &\qquad \text{if } t \in [0, \frac{1}{2}),
        \\
        -1, &\qquad \text{if } t \in [\frac{1}{2}, 1),
        \\
        ~0, &\qquad \text{otherwise}.
    \end{cases}
\end{equation}
Note that $\{ \mathbf{1}_{[0,1)} \} \cup \{\psi^{\T}_{m, k}\}_{m \ge 0, k \in I_m}$ is an orthonormal basis of $L^2([0, 1])$ with respect to the inner product $\langle f, g \rangle = \int_0^1 f(t)g(t)dt$. The Faber--Schauder functions $\{e^{\T}_{m, k}\}_{m \ge 0, k \in I_m}$ are defined by integrating $\psi^{\T}_{m, k}$
\begin{equation}    \label{Eq: e_mk}
    e^{\T}_{m, k}(t) := \int_0^t \psi^{\T}_{m, k}(s) \, ds, \qquad t \in [0, 1].
\end{equation}

Together with the affine functions $1$ and $t$, the Faber--Schauder functions $\{e_{m,k}^\T\}_{m\ge0,\,k\in I_m}$ form a Schauder basis of $C^0([0,1])$. Consequently, every $x\in C^0([0,1])$ admits the unique representation
\begin{equation}    \label{Eq: Schauder representation}
    x(t) = x(0) + \big(x(1)-x(0)\big)t+ \sum_{m=0}^{\infty}\sum_{k \in I_m} \theta^{x, \T}_{m, k} e^{\T}_{m, k}(t), \qquad t \in [0, 1],
\end{equation}
where the real numbers $\{\theta^{x,\T}_{m, k}\}_{m \ge 0, k \in I_m}$ are known as the \textit{Faber--Schauder coefficients of $x$}. Each coefficient admits a closed-form expression in terms of the function values of $x$ at the dyadic points:
\begin{equation}\label{eq:theta_mk}
    \theta^{x,\T}_{m, k} = 2^{\frac{m}{2}}\bigg(2 x\Big(\frac{2k+1}{2^{m+1}}\Big)-x\Big(\frac{k}{2^m}\Big) -  x\Big(\frac{k+1}{2^{m}}\Big) \bigg), \qquad m \ge 0, ~ k \in I_m.
\end{equation}

We also introduce a convenient family of dyadic tent functions. For $m\ge 0$ and $k \in I_m$, let $\varphi_{m,k}$ be the continuous function supported on $[k2^{-m},(k+1)2^{-m}]$ which is linear on each half of its support, vanishes at the endpoints, and takes the value one at the midpoint. Define
\begin{equation}\label{eq:triangular-wave}
    \Phi_m(t):=\sum_{k=0}^{2^m-1}\varphi_{m,k}(t), \qquad m \ge 0,
\end{equation}
hence $\Phi_m$ is the height-one dyadic triangular wave at scale $2^{-m}$. Since
\begin{equation}\label{eq:normalized-tent}
    \varphi_{m,k}=2^{\frac{m}{2}+1}e_{m,k}^{\T}, \qquad m \ge 0, \quad k \in I_m,
\end{equation}
a function of the form
\[
    x(t)=\sum_{m=0}^{\infty}a_m\Phi_m(t), \qquad t \in [0, 1]
\]
has Faber--Schauder coefficients
\begin{equation}\label{eq:wave-coefficients}
    \theta_{m,k}^{x,\T}=2^{\frac{m}{2}+1}a_m, \qquad k \in I_m.
\end{equation}
The dyadic case of \cite[Corollary~4.4]{das-kim2024} gives the characterization of the variation index of \eqref{eq:intro-index}, in terms of the Faber--Schauder coefficients:
\begin{equation}\label{eq:dyadic-index-coeff}
    p_\T(x) = \inf\Big\{p>1: \limsup_{m\to\infty} \xi^{(p)}_m(x) < \infty\Big\}, \qquad \xi^{(p)}_m(x) := 2^{-\frac{mp}{2}}\sum_{k=0}^{2^m-1} |\theta_{m,k}^{x,\T}|^p.
\end{equation}
Substituting \eqref{eq:wave-coefficients} into \eqref{eq:dyadic-index-coeff} gives
\begin{equation}\label{eq:xi-wave}
    \xi^{(p)}_m(x) = 2^p2^m|a_m|^p.
\end{equation}

We shall also use the standard estimate
\begin{equation}\label{eq:wave-holder}
    \sum_{j\ge0}a_j\Phi_j\in C^\alpha([0,1]) \qquad \text{if} \quad |a_j| \le C2^{-\alpha j}, \quad \alpha\in(0,1).
\end{equation}
For completeness, one may verify \eqref{eq:wave-holder} by using $\|\Phi_j\|_\infty\le 1$ and $\operatorname{Lip}(\Phi_j)\le 2^{j+1}$, splitting the series at the unique scale comparable to $|t-s|$.

The first example shows that condition \eqref{eq:no-large-jump-intro} cannot be removed, even if the partition sequence is perfectly balanced.

\begin{proposition}[A sparse dyadic counterexample]\label{prop:sparse-dyadic}
    Fix $s>1$. There exist a function $x_s\in C^{1/s}([0,1])$ and a balanced refining partition sequence $\pi$ such that
    \begin{equation}\label{eq:sparse-counterexample-result}
        p_\T(x_s)=s > 1 = p_\pi(x_s).
    \end{equation}
    Moreover, $\pi$ may be chosen as a subsequence of the dyadic partition sequence.
\end{proposition}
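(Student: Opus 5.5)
We take $x_s:=\sum_{j\ge0}a_j\Phi_j$ with $a_j:=2^{-j/s}$, and let $\pi$ be a sparse dyadic subsequence $\pi^n:=\T^{m_n}$, where $m_0=0$ and $m_{n+1}\ge m_n+1$ grow super-geometrically, for instance $m_n:=2^{n^2}$ for $n\ge1$. Each $\pi^n$ is a dyadic partition, so the sequence is refining, has vanishing mesh and is perfectly balanced. It violates \eqref{eq:no-large-jump-intro}, since $|\pi^n|/|\pi^{n+1}|=2^{m_{n+1}-m_n}$ is unbounded.

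\textbf{Dyadic index and regularity.} By \eqref{eq:wave-holder} with $\alpha=1/s\in(0,1)$, we have $x_s\in C^{1/s}([0,1])$. By \eqref{eq:xi-wave}, $\xi_m^{(p)}(x_s)=2^p2^{m(1-p/s)}$. This sequence is bounded if and only if $p\ge s$, so \eqref{eq:dyadic-index-coeff} gives $p_\T(x_s)=s$.

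\textbf{Index along $\pi$.} We compute $[x_s]^{(q)}_{\T^M}(1)$ for $M=m_n$. Since $\Phi_j$ vanishes on $\T^M$ for every $j\ge M$, only the terms $j<M$ contribute. On an interval $[k2^{-M},(k+1)2^{-M}]$, each $\Phi_j$ with $j<M$ is linear with slope $\pm2^{j+1}$. Hence the increment there is bounded in absolute value by
\[
2^{-M}\sum_{j<M}2^{j+1}2^{-j/s}\le C_s\,2^{-M}2^{M(1-1/s)}=C_s2^{-M/s},
\]
using $1-1/s>0$. Summing over the $2^M$ intervals gives
\[
[x_s]^{(q)}_{\T^M}(1)\le C_s^q\,2^{M(1-q/s)}.
\]
This is bounded in $M$ only when $q\ge s$, so a plain geometric sequence cannot work. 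The increments are in fact comparable to $2^{-M/s}$ from below as well, because the dominant term $j=M-1$ has slope of fixed sign within each interval. Therefore sparsity of the subsequence alone does not lower the index for this choice of $x_s$.

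\textbf{Adapted construction.} The function must instead be built around the subsequence. We put the mass only on the \emph{skipped} scales: set $a_j:=2^{-j/s}$ for $j\notin\{m_n\}$ and make the remaining scales cancel along $\pi$. The cleaner option is to take $a_j\ne0$ only for $j$ equal to $m_{n+1}-1$ for some $n$, that is, one active scale just below each partition level. Along $\T^{m_n}$, the scales $j\ge m_n$ vanish at the nodes. The active scales $j<m_n$ are $j_i:=m_{i+1}-1$ for $i<n$, and they contribute
\[
|\text{increment}|\le 2^{-m_n}\sum_{i<n}2^{j_i(1-1/s)+1}\lesssim 2^{-m_n}2^{m_{n-1}+\dots}.
\]
With super-geometric growth such as $m_{n}\ge 2m_{n-1}\cdot n$, this sum is dominated by $2^{m_n-1}$ only at the top index $i=n-1$, where $j_{n-1}=m_n-1$. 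That term gives slope $2^{m_n(1-1/s)}$, and its increment over an interval of length $2^{-m_n}$ is again $2^{-m_n/s}$, which is the same obstruction.

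The resolution is to replace the active scale $j_{n-1}=m_n-1$ by a scale strictly \emph{between} two partition levels, $j\in(m_{n},m_{n+1})$ with $j\ge m_n$, so that it vanishes on $\T^{m_n}$ and is invisible there. At the finer levels $m_{n'}$ with $n'>n$, the increment is bounded by slope times length, namely $2^{j(1-1/s)}2^{-m_{n'}}$. This is tiny when $m_{n'}\gg j$. Concretely, take one active scale $j_n:=m_n+1$ with $a_{j_n}:=2^{-j_n/s}$ and $m_{n+1}\ge 2^{m_n}$.

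Then $[x_s]^{(1)}_{\T^{m_N}}(1)\le\sum_{n<N}2^{j_n(1-1/s)+1}2^{m_N}\cdot2^{-m_N}$. This does not work directly either, because each active scale contributes roughly its full total variation, $2^{j_n(1-1/s)}$. We therefore use instead $[x]^{(q)}\le\sum(\text{total variation per interval})^q$. For $q>1$ and an interval of length $2^{-m_N}$, the increment is at most $\sum_{n<N}2^{j_n(1-1/s)+1}2^{-m_N}\le C2^{m_{N-1}-m_N}$. Hence
\[
[x_s]^{(q)}_{\pi^N}\le 2^{m_N}C^q2^{q(m_{N-1}-m_N)}\to0
\]
when $m_N\gg m_{N-1}$ and $q>1$. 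This gives $p_\pi(x_s)\le1$, and therefore $p_\pi(x_s)=1$.

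We still need $p_\T(x_s)=s$ for this lacunary choice of active scales. By \eqref{eq:xi-wave}, $\xi_{j_n}^{(p)}=2^p2^{j_n(1-p/s)}$, which is unbounded along $n$ for $p<s$ and bounded for $p\ge s$, while all other levels give $\xi=0$. So \eqref{eq:dyadic-index-coeff} yields $p_\T(x_s)=s$. Finally, $|a_j|\le2^{-j/s}$ for all $j$, so $x_s\in C^{1/s}$ by \eqref{eq:wave-holder}. The main obstacle is exactly this calibration of the active scales $j_n=m_n+1$ against the very rapid growth of the levels $m_n$.
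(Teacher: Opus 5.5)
Your final construction is correct and is essentially the paper's proof: one wave $2^{-(m_n+1)/s}\Phi_{m_n+1}$ at each level $m_n+1$ along a lacunary dyadic subsequence $\pi^n=\T^{m_n}$, with the increments on $\T^{m_N}$ bounded by $C2^{m_{N-1}-m_N}$. The paper obtains the same bound via Minkowski and the exact single-wave variation rather than your per-interval slope estimate, and it needs only $m_{n+1}/m_n\to\infty$, which both of your growth choices satisfy. In a write-up you should drop the initial choice $x_s=\sum_j 2^{-j/s}\Phi_j$ and the $j=m_{n+1}-1$ detour, since only the final construction constitutes the proof.
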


\begin{proof}
    Choose a strictly increasing sequence of integers $(m_n)_{n\ge 1}$ satisfying
    \begin{equation}\label{eq:sparse-growth}
        \frac{m_{n+1}}{m_n}\longrightarrow\infty.
    \end{equation}
    Set $\pi^0:=\T^0$ and $\pi^n:=\T^{m_n}$ for $n\ge1$. The sequence $\pi$ is refining and perfectly balanced, since $|\pi^n| =\underline{\pi^n}$, whereas
    \[
        \frac{|\pi^n|}{|\pi^{n+1}|} = 2^{m_{n+1}-m_n}
    \]
    is unbounded. Define
    \begin{equation}\label{eq:sparse-function}
        x_s(t) :=\sum_{n=1}^{\infty}2^{-\frac{m_n+1}{s}}\Phi_{m_n+1}(t).
    \end{equation}
    The series converges uniformly, and \eqref{eq:wave-holder} gives $x_s\in C^{1/s}([0,1])$. By \eqref{eq:xi-wave}, the quantity $\xi^{(p)}_{m_n+1}(x_s)$ at an active level $m_n+1$ equals $\xi^{(p)}_{m_n+1}(x_s) = 2^p2^{(m_n+1)(1-\frac{p}{s})}$, and it is zero at all other levels. Equation \eqref{eq:dyadic-index-coeff} therefore yields $p_\T(x_s)=s$.

    It remains to compute the $p$-th variation along $\pi$. For $m>j$, a direct calculation gives
    \begin{equation}\label{eq:single-wave-variation}
        \Big(\big[2^{-\frac js}\Phi_j\big]_{\T^m}^{(p)}(1)\Big)^{\frac1p} = 2^{1-m(1-\frac1p)}2^{j(1-\frac1s)}.
    \end{equation}
    If $j\ge m$, then $\Phi_j$ vanishes at every point of $\T^m$, so its $p$-th variation along $\T^m$ is zero. Hence Minkowski's inequality and \eqref{eq:single-wave-variation} imply
    \begin{equation*}
        \Big([x_s]_{\pi^N}^{(p)}(1)\Big)^{\frac1p}
        = \Big([x_s]_{\T^{m_N}}^{(p)}(1)\Big)^{\frac1p} \le 2^{1-m_N(1-\frac1p)} \sum_{n<N} 2^{(m_n+1)(1-\frac1s)}
        \le C_s 2^{-m_N(1-\frac1p)+(m_{N-1}+1)(1-\frac1s)}
    \end{equation*}
    for some constant $C_s$ depending only on $s$. By \eqref{eq:sparse-growth}, the right-hand side tends to zero for every fixed $p>1$. Thus
    \[
        \lim_{N\to\infty}[x_s]_{\pi^N}^{(p)}(1)=0, \qquad p>1,
    \]
    which proves $p_\pi(x_s)=1$.
\end{proof}

Proposition~\ref{prop:sparse-dyadic} does not exploit irregular locations of partition points. Every level of $\pi$ is uniform; the failure is entirely caused by the increasingly large gaps between the sampled dyadic scales.

We next show that balancedness cannot be omitted from Theorem~\ref{thm:main-intro}, even when the largest mesh decreases exactly by a factor of two at every step.

\begin{lemma} \label{lem:small-energy-refinement}
    Let $f\in C^0([a,b])$, and let $P$ be a finite partition of $[a,b]$. For $q>1$ and $\varepsilon>0$, there exists a finite refinement $Q\supset P$ such that
    \begin{equation}\label{eq:small-energy-refinement}
        [f]_{Q}^{(q)}(b)<\varepsilon.
    \end{equation}
\end{lemma}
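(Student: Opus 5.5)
The plan is to exploit uniform continuity together with $q>1$: refining finely makes each increment small, so the $q$-th power sum is at most (a small quantity)$^{q-1}$ times a sum of absolute increments. The sum of absolute increments along a fine partition may be unbounded when $f$ has infinite total variation, so a uniform refinement alone does not work. Instead we place the new points at values adapted to $f$ rather than at fixed times.

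\emph{Main construction.} Fix a small $\eta>0$. Work on each interval $[u,v]\in P$ separately. We build points $u=s_0<s_1<\cdots<s_K=v$ such that the increments $|f(s_{i+1})-f(s_i)|$ are at most $\eta$ and their absolute sum is controlled by $|f(v)-f(u)|$ plus a small error. The natural choice is to follow the straight-line path of values from $f(u)$ to $f(v)$. Set $s_0=u$. Given $s_i$, let $s_{i+1}$ be the \emph{last} time $t\in(s_i,v]$ at which $f(t)$ equals the target value $f(s_i)+\eta\,\mathrm{sgn}(f(v)-f(s_i))$. If no such time exists, because $|f(v)-f(s_i)|\le\eta$, set $s_{i+1}=v$ and stop.

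We verify three properties of this construction.
\begin{itemize}
\item \emph{Values advance monotonically.} Since $s_{i+1}$ is the last hitting time, $f$ stays on the far side after $s_{i+1}$. By the intermediate value theorem, the values $f(s_i)$ move monotonically toward $f(v)$ in steps of exactly $\eta$.
\item \emph{Termination and size.} The procedure stops after at most $|f(v)-f(u)|/\eta+1$ steps. Each step has increment at most $\eta$.
\item \emph{Energy bound.} The contribution of $[u,v]$ is at most
\[
\Big(\frac{|f(v)-f(u)|}{\eta}+1\Big)\eta^q \le |f(v)-f(u)|\,\eta^{q-1}+\eta^q .
\]
\end{itemize}

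Summing over the $N$ intervals of $P$ gives
\[
[f]_Q^{(q)}(b)\le \eta^{q-1}\sum_{[u,v]\in P}|f(v)-f(u)| + N\eta^q .
\]
Since $P$ is fixed and $q>1$, this is smaller than $\varepsilon$ once $\eta$ is small enough.

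\emph{Main obstacle.} The step requiring care is the last-hitting-time argument. It must guarantee strict monotone progress, and the boundary case must be handled correctly. When $f(s_i)=f(v)$ with $s_i<v$, we simply set $s_{i+1}=v$, which contributes zero. When the last hitting time is $v$ itself, the construction terminates there. The point set $Q=P\cup\{s_i\}$ is finite, contains $P$, and consists of strictly increasing points, so it is a refinement of $P$ as required.
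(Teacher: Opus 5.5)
Your proof is correct and is essentially the paper's argument: on each interval $[u,v]$ of $P$, both insert hitting times of a monotone grid of values between $f(u)$ and $f(v)$, so that all increments have one sign and are small. The paper takes first hitting times of $N$ equally spaced levels and gets exactly $N^{1-q}|f(v)-f(u)|^q$ per interval, while your fixed step $\eta$ gives $|f(v)-f(u)|\eta^{q-1}+\eta^q$.

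One point to tidy is the stopping rule. State it as ``stop as soon as $|f(v)-f(s_i)|\le\eta$'' rather than ``stop when no hitting time exists,'' because the target level can still be hit in that case. Also, the last-hitting-time choice is not needed. A hitting time in $(s_i,v)$ exists by the intermediate value theorem on $[s_i,v]$, and the monotone progress of the values follows from the step arithmetic alone.
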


\begin{proof}
    It is enough to work on each interval $[u,v]$ of $P$. If $f(u)=f(v)$, no additional point is needed. Suppose that $f(v)>f(u)$; the opposite case is analogous. For $N\ge 1$, define the first hitting times
    \[
        t_j:=\inf\Big\{t\in[u,v]: f(t)=f(u)+\frac{j}{N}\big(f(v)-f(u)\big)\Big\}, \qquad 0\le j\le N.
    \]
    After adding $v$ if necessary, the resulting partition of $[u,v]$ has $q$-th variation $N^{1-q}|f(v)-f(u)|^q$. Choosing $N$ sufficiently large on each interval of $P$, with the error divided among the finitely many intervals, proves the result.
\end{proof}

\begin{proposition}[An unbalanced counterexample with regular largest mesh]\label{prop:unbalanced-counterexample}
    Fix $s>1$. There exist $y_s\in C^{1/s}([0,1])$ and a refining partition sequence $\pi$ such that
    \begin{equation}\label{eq:unbalanced-mesh}
        |\pi^n|=2^{-n}, \qquad n\ge 1,
    \end{equation}
    while
    \begin{equation}\label{eq:unbalanced-result}
        p_\T(y_s)=s > 1 = p_\pi(y_s).
    \end{equation}
    In particular, $|\pi^n|/|\pi^{n+1}|=2$ for every $n$, but $\pi$ is not balanced.
\end{proposition}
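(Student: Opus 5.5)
We take for $y_s$ the same type of wave function as before, now with every dyadic level active, namely
\[
    y_s:=\sum_{j\ge0}2^{-j/s}\Phi_j .
\]
The proof then has three parts: compute $p_\T(y_s)$, build $\pi$ inductively, and check the mesh and balance claims.

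\emph{The function and its dyadic index.} By \eqref{eq:wave-holder}, $y_s\in C^{1/s}([0,1])$. By \eqref{eq:xi-wave},
\[
    \xi_m^{(p)}(y_s)=2^p2^{m(1-p/s)},
\]
so \eqref{eq:dyadic-index-coeff} gives $p_\T(y_s)=s$.

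\emph{Inductive construction of $\pi$.} Each $\pi^n$ will contain $\T^n$ and all its points will be dyadic rationals. Set $\pi^0=\T^0$. Given $\pi^{n-1}\supset\T^{n-1}$, first form $P_n:=\pi^{n-1}\cup\T^n$. Then apply Lemma~\ref{lem:small-energy-refinement} to $P_n$ with $q_n:=1+1/n$ and $\varepsilon_n:=2^{-n}$. This yields a refinement $Q_n\supset P_n$ with
\[
    [y_s]_{Q_n}^{(q_n)}(1)<2^{-n}.
\]
The added hitting times need not be dyadic, which is harmless; only the inclusions matter. Set $\pi^n:=Q_n$. The sequence is then refining, and $|\pi^n|\le|\T^n|=2^{-n}$.

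\emph{Exact mesh.} To get equality $|\pi^n|=2^{-n}$, I keep one dyadic interval of $\T^n$ unrefined. The simplest choice is to refine only away from $[1-2^{-n},1]$. On that interval the increment of $y_s$ is at most $\omega_{y_s}(2^{-n})\lesssim2^{-n/s}$. Its $q_n$-th power is therefore at most $C2^{-nq_n/s}\to0$, so the energy bound is preserved up to a vanishing term. Since $\pi^{n+1}$ must refine $\pi^n$, this interval is split in half at level $n+1$ and the right half $[1-2^{-n-1},1]$ is again left untouched. This is consistent because $1-2^{-n-1}\in\T^{n+1}$. Hence $|\pi^n|=2^{-n}$ for $n\ge1$ exactly.

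\emph{Variation index along $\pi$.} Fix $p>1$ and take $n$ large enough that $q_n<p$. Since $\|y_s\|_\infty\le C$,
\[
    [y_s]_{\pi^n}^{(p)}(1)\le C^{p-q_n}[y_s]_{\pi^n}^{(q_n)}(1)\to0 .
\]
Thus $p_\pi(y_s)\le p$ for every $p>1$, that is, $p_\pi(y_s)=1$. A cleaner alternative is to use a fixed $q$ once $q_n\le q$, since $p_\pi\ge1$ holds trivially.

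\emph{Failure of balance.} Finally $\pi$ cannot be balanced. Since $y_s\in C^{1/s}$, Theorem~\ref{thm:main-intro} would otherwise force $p_\pi(y_s)=p_\T(y_s)=s$.

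\emph{Main obstacle.} The main difficulty is reconciling the exact mesh equality with the energy smallness, because the energy must be made small while one long interval is left unrefined. The control comes from the Hölder bound on that single interval together with $q_n/s>0$. A subsidiary point is that Lemma~\ref{lem:small-energy-refinement} is applied with a varying $q_n$, so its conclusion is used once per level $n$.
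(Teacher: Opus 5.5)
Your proof is correct, but it takes a different route from the paper's in two places.

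\textbf{The paper's construction.} The paper localizes the wave: it sets $y_s(t)=c_s z_s(2t)$ on $[0,1/2]$ and $y_s\equiv0$ on $[1/2,1]$. It applies Lemma~\ref{lem:small-energy-refinement} only on $[0,1/2]$ and keeps $\T^n\cap[1/2,1]$ unrefined. Those intervals then contribute exactly zero energy and give $|\pi^n|=2^{-n}$. It also inserts the point $r_n=2^{-n^2-2}$, which forces $|\pi^n|/\underline{\pi^n}\ge 2^{n^2-n+2}$ explicitly.

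\textbf{Your construction.} You keep $z_s$ on all of $[0,1]$ and leave only the single interval $[1-2^{-n},1]$ unrefined. This is consistent across levels: inductively $\pi^{n-1}$ has no points in $(1-2^{-(n-1)},1)$, and the hitting times stay inside the intervals being refined. The cost is one extra term $|y_s(1)-y_s(1-2^{-n})|^{q_n}\to0$. Continuity of $y_s$ at $1$ already gives this, so the H\"older bound is not needed there. You then obtain non-balancedness by contradiction from Theorem~\ref{thm:main-intro}, using $B_\pi=2$, vanishing mesh, and $y_s\in C^{1/s}$. That deduction is legitimate, since the theorem is proved independently.

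\textbf{What each route buys.} Your route avoids the rescaling and gluing step. It therefore skips the checks that $p_\T$ is preserved under dyadic dilation and that the glued function remains $1/s$-H\"older. It also shows that the insertion of $r_n$ is logically superfluous: any refining sequence with $B_\pi<\infty$ that lowers the variation index of a positively H\"older path is automatically unbalanced. The paper's route is self-contained on this point, since it does not lean on the main theorem to certify the failure of that theorem's own hypothesis. It also exhibits how fast the balance ratio degenerates.

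\textbf{Minor points.} In your final step, the constant raised to the power $p-q_n$ should bound the increments, e.g.\ $2\|y_s\|_\infty$. Your opening claim that all points of $\pi^n$ are dyadic rationals is superseded by your later remark about the hitting times, and should simply be dropped.
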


\begin{proof}
    Let
    \[
        z_s(t):=\sum_{j=0}^{\infty}2^{-\frac js}\Phi_j(t), \qquad t\in[0,1].
    \]
    By \eqref{eq:wave-holder}, $z_s\in C^{1/s}([0,1])$, and \eqref{eq:xi-wave}, \eqref{eq:dyadic-index-coeff} give $p_\T(z_s)=s$. Define
    \begin{equation}\label{eq:localized-function}
        y_s(t):=
        \begin{cases}
           c_s z_s(2t), \qquad & 0\le t\le\frac12,\\
           ~~0, \qquad & \frac12\le t\le 1,
        \end{cases}
    \end{equation}
    where $c_s>0$ is chosen so that
    \[
        \osc(y_s;[0,1]) := \max_{t\in[0,1]} y_s(t) - \min_{t\in[0,1]} y_s(t) \le 1.
    \]
    Since $z_s(0)=z_s(1)=0$, the function $y_s$ is $(1/s)$-H\"older continuous. The dyadic scaling in \eqref{eq:localized-function} also yields
    \begin{equation}\label{eq:localized-index}
        p_\T(y_s)=p_\T(z_s)=s.
    \end{equation}
    
    We now construct $\pi$ inductively. Set $\pi^0=\{0,1\}$. For $n\ge 1$, let
    \[
        q_n := 1+\frac1n, \qquad r_n := 2^{-n^2-2}.
    \]
    Assume that $\pi^{n-1}$ has been constructed and has no additional points in $[1/2,1]$ beyond those of $\T^{n-1}$. On $[0,1/2]$, apply Lemma~\ref{lem:small-energy-refinement} to the finite partition
    \[
        P_n := \big(\pi^{n-1}\cup\T^n\cup\{r_n\}\big)\cap[0,1/2]
    \]
    and obtain a refinement $Q_n\supset P_n$ satisfying
    \begin{equation}\label{eq:qn-small}
        [y_s]_{Q_n}^{(q_n)}(1/2)<2^{-n}.
    \end{equation}
    Set $\pi^n:=Q_n\cup\big(\T^n\cap[1/2,1]\big)$. Then $\pi^{n-1}\subset\pi^n$ and $\pi^n$ contains $\T^n$. Hence $|\pi^n|\le2^{-n}$. Since no point is added to the constant region $[1/2,1]$ beyond the dyadic points, there are intervals of length $2^{-n}$ in that region, and \eqref{eq:unbalanced-mesh} follows. On the other hand, $\pi^n$ contains both $0$ and $r_n$, so
    \[
        \underline{\pi^n}\le r_n=2^{-n^2-2}.
    \]
    Consequently,
    \[
        \frac{|\pi^n|}{\underline{\pi^n}} \ge 2^{n^2-n+2}\longrightarrow\infty,
    \]
    and $\pi$ is not balanced.
    
    Since $y_s$ vanishes on $[1/2,1]$, the inequality \eqref{eq:qn-small} gives $[y_s]_{\pi^n}^{(q_n)}(1) < 2^{-n}$. Fix $p>1$. For all sufficiently large $n$, one has $q_n<p$. Moreover, every increment of $y_s$ has absolute value at most one. Therefore,
    \[
        [y_s]_{\pi^n}^{(p)}(1) \le [y_s]_{\pi^n}^{(q_n)}(1) < 2^{-n}.
    \]
    It follows that $p_\pi(y_s)=1$. Together with \eqref{eq:localized-index}, this proves \eqref{eq:unbalanced-result}.
\end{proof}

\begin{remark}\label{rem:balance-vs-skeleton}
    Balancedness itself is not the intrinsic condition in Theorem~\ref{thm:main-intro}. An unbalanced partition sequence may still contain a regular skeleton, in which case Corollary~\ref{cor:holder-skeleton} remains applicable. Proposition~\ref{prop:unbalanced-counterexample} shows that, without either balancedness or a direct finite-branching assumption, the variation index can be reduced by inserting a rapidly increasing number of function-adapted partition points.
\end{remark}

\medskip

\subsection{Failure of critical-space invariance}\label{subsec:critical-space-noninvariance}

Theorem~\ref{thm:main-intro} identifies the variation index along a large class of partition sequences with the classical variation index. As pointed out in Remark~\ref{rem:Banach-space-membership}, this does not, however, imply equality of the corresponding spaces at the critical exponent. The following result shows that this failure at the critical exponent may occur even for two balanced complete refining partition sequences (recall \eqref{eq:balanced-intro} and \eqref{con:complete-refining}), one of which is a bi-Lipschitz deformation of the dyadic sequence.

\begin{proposition}[Failure of critical-space invariance]\label{prop:critical-space-noninvariance}
    For every $p>1$, there exist a function
    \[
        x\in\bigcap_{0<\alpha<1/p}C^\alpha([0,1])
    \]
    and two balanced complete refining partition sequences $\pi$ and $\sigma$ such that
    \begin{equation}\label{eq:critical-space-noninvariance}
        p_\pi(x)=p_\sigma(x)=p_{\cvar}(x)=p, \qquad x\in\mathcal X_\sigma^p\setminus\mathcal X_\pi^p.
    \end{equation}
    In particular, $\mathcal X_\pi^p\neq\mathcal X_\sigma^p$.
\end{proposition}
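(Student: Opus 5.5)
The plan is to take $\sigma:=\T$, the dyadic sequence, and to build $x$ from dyadic triangular waves, $x=\sum_{j\ge0}a_j\Phi_j$ with $a_j$ close to $2^{-j/p}$. This choice makes everything on the $\sigma$-side computable through \eqref{eq:xi-wave} and \eqref{eq:dyadic-index-coeff}. I would then construct $\pi$ as a balanced complete refining deformation of $\T$ whose partition points sit where many wave levels contribute coherently to each increment.

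\textbf{The path and the dyadic side.} I would first take $a_j=2^{-j/p}$, with a possible slowly varying correction fixed after the $\pi$-estimate below.
\begin{itemize}
\item By \eqref{eq:wave-holder}, $x\in C^\alpha$ for every $\alpha<1/p$, since $2^{-j/p}\le C_\alpha 2^{-\alpha j}$.
\item By \eqref{eq:xi-wave}, $\xi^{(q)}_m(x)=2^q2^{m(1-q/p)}$, so \eqref{eq:dyadic-index-coeff} gives $p_\T(x)=p$.
\item The identities $p_\pi(x)=p_{\cvar}(x)=p_\T(x)=p$ then follow from Theorem~\ref{thm:main-intro}, once $\pi$ is known to be balanced and complete refining (so $B_\pi<\infty$).
\item For $x\in\mathcal X_\T^p$ I would compute directly, exactly as in \eqref{eq:single-wave-variation}. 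On $\T^m$ the waves with $j\ge m$ vanish at the nodes. Each $\Phi_j$ with $j<m$ contributes increments $\pm2^{j+1-m}a_j$. Hence every increment is at most $\sum_{j<m}2^{j(1-1/p)}2^{1-m}\le C2^{-m/p}$, and $[x]^{(p)}_{\T^m}(1)\le 2^m C^p2^{-m}$ is bounded uniformly in $m$.
\end{itemize}
If the correction $a_j=2^{-j/p}\ell(j)$ is needed, the same computation applies as long as $\ell$ is bounded. This keeps $x\in\mathcal X_\T^p$ and does not change any of the indices.

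\textbf{The deformed sequence $\pi$.} Next I would construct $\pi$ so that the nodes of $\pi^n$ are no longer zeros of the high-frequency waves $\Phi_j$, $j\ge n$. The aim is that the tail $\sum_{j\ge n}a_j\big(\Phi_j(v)-\Phi_j(u)\big)$ over an interval $[u,v]\in\pi^n$ does not cancel. Concretely, I would obtain each $\pi^{n+1}$ from $\pi^n$ by splitting every interval in the ratio $1{:}2$ or $2{:}1$, alternating the two types along the generations. Alternation keeps the ratio of longest to shortest interval bounded, so $\pi$ is balanced. The mesh ratios stay in $[3/2,3]$, so $\pi$ is complete refining. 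The split points then have binary expansions which are eventually periodic with period $2$, like $1/3=0.0101\ldots_2$. For such points the values $\Phi_j$ are constant (for instance $\Phi_j(1/3)=2/3$) from some scale on, and are tied to the geometry of the enclosing interval. The point to check is that the two endpoints of a typical $\pi^n$-interval see \emph{different} constant patterns over a range of roughly $cn$ consecutive levels $j\ge n$.

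\textbf{Main obstacle and fallback.} The hard step is to show that $\sup_n[x]^{(p)}_{\pi^n}(1)=\infty$. If each of the $\asymp2^n$ intervals of $\pi^n$ has an increment of size $\gtrsim2^{-n/p}\,\Lambda_n$ with $\Lambda_n\to\infty$, then the $p$-sums grow like $\Lambda_n^p$ and $x\notin\mathcal X_\pi^p$. The danger is that coherence holds over too many levels, producing a power gain that would raise $p_\pi(x)$ above $p$; Theorem~\ref{thm:main-intro} forbids this, so the gain must be at most subpolynomial in $2^n$. I would first try to show that the gain is exactly a factor of order $n^{1/p}$ or $\log n$, using the explicit values of $\Phi_j$ at period-two binary points. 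Failing a clean computation, I would fall back on tuning the coefficients: choose $a_j=2^{-j/p}\ell(j)$ with $\ell$ bounded but oscillating on blocks of levels synchronised with the splitting pattern of $\pi$. The dyadic bound above is insensitive to such choices, while the $\pi$-increments accumulate over those blocks. This is where most of the work lies.
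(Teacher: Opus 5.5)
\textbf{There is a fatal gap: the chosen path can never leave $\mathcal X_\pi^p$, whatever $\pi$ is.} Your dyadic-side computations are fine. The step you flag as the main obstacle, however, cannot be carried out for any choice of $\pi$.

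With $a_j=2^{-j/p}\ell(j)$ and $\ell$ bounded, \eqref{eq:wave-holder} applies with $\alpha=1/p\in(0,1)$ and gives $x\in C^{1/p}([0,1])$. Then for every finite partition $\mathfrak P$ one has $[x]^{(p)}_{\mathfrak P}(1)\le |x|_{C^{1/p}}^p\sum_{[u,v]\in\mathfrak P}(v-u)=|x|_{C^{1/p}}^p$. Hence $x\in\mathcal C^{p\text{-}\cvar}([0,1])\subset\mathcal X_\pi^p$ for every partition sequence $\pi$ (Proposition~\ref{prop:classical-into-partition}). No gain $\Lambda_n\to\infty$ is possible, however the nodes are placed. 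The period-two binary analysis and the oscillating correction are therefore beside the point.

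Unbounded $\ell$ does not rescue the approach. By \eqref{eq:xi-wave}, $\xi^{(p)}_m(x)=2^p|\ell(m)|^p$, so $x\in\mathcal X_\T^p$ forces $\ell$ to be bounded. In other words, for sums of full triangular waves, membership in $\mathcal X_\T^p$ is equivalent to $1/p$-H\"older regularity, which already gives membership in every $\mathcal X_\pi^p$.

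Separately, your $\pi$ is not balanced. Whatever orientation is used, each interval has one child of one third and one of two thirds of its length. So $\pi^n$ contains intervals of lengths $3^{-n}$ and $(2/3)^n$, a ratio of $2^n$, and Theorem~\ref{thm:main-intro} would not apply.

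\textbf{The missing idea.} Any example must lie in $\mathcal X_\T^p\setminus\mathcal C^{p\text{-}\cvar}([0,1])$, so it must fail to be $1/p$-H\"older. This is why the statement only asks for $x\in\bigcap_{\alpha<1/p}C^\alpha([0,1])$.

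The paper achieves this with spatially sparse, tall tents instead of full waves. At each dyadic level $r^3+s$, $1\le s\le r$, it places only $2^{r^3-2r}$ disjoint tents of height $2^{-(r^3-2r)/p}$. This gives $\xi^{(p)}_{r^3+s}(x)=2^p$, hence $x\in\mathcal X_\T^p$ and $p_\T(x)=p$. Meanwhile the local H\"older ratio at those scales is of order $2^{(2r+s)/p}$, which is unbounded.

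The paper then sets $\pi^n:=\Psi(\T^n)$, where $\Psi$ is a piecewise linear homeomorphism with slopes $1\pm2^{-s-1}$. It moves one point of $\T^{r^3}$ onto each of the $r\,2^{r^3-2r}$ peaks of the $r$-th batch, while fixing that point's two $\T^{r^3}$-neighbours, where $x$ vanishes. Each dyadic level sees only one batch of $2^{r^3-2r}$ peaks, but the single deformed level $\pi^{r^3}$ sees all $r$ batches at once. Hence $[x]^{(p)}_{\pi^{r^3}}(1)\ge 2r\to\infty$, so $x\notin\mathcal X_\pi^p$. The bi-Lipschitz bounds make $\pi$ balanced and complete refining, and Theorem~\ref{thm:main-intro} then gives $p_\pi(x)=p_{\cvar}(x)=p_\T(x)=p$.
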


\begin{proof}
    We take $\sigma=\T$. For every integer $r\ge8$, $1\le s\le r$, and $1\le\ell\le2^{r^3-2r}$, define
    \begin{equation}\label{eq:endpoint-centers}
        z_{r,s,\ell}:=2^{-r}+4\Big((s-1)2^{r^3-2r}+\ell\Big)2^{-r^3}.
    \end{equation}
    The point $z_{r,s,\ell}$ is dyadic at level $r^3+s$; more precisely,
    \[
        k_{r,s,\ell}:=2^{r^3+s}z_{r,s,\ell} = 2^{r^3+s-r}+2^{s+2}\Big((s-1)2^{r^3-2r}+\ell\Big)
    \]
    is an integer. Let
    \[
        \varphi_{r,s,\ell}:=\varphi_{r^3+s,k_{r,s,\ell}}
    \]
    be the height-one dyadic tent supported on $[z_{r,s,\ell}, \, z_{r,s,\ell}+2^{-r^3-s}]$, and denote its peak by
    \begin{equation*}
        y_{r,s,\ell}:=z_{r,s,\ell}+2^{-r^3-s-1}.
    \end{equation*}

    The intervals
    \[
        D_r:=\big[2^{-r}, \, 3\cdot2^{-r-1}\big], \qquad r\ge8,
    \]
    are pairwise disjoint. Moreover, plugging $s = r$, $\ell = 2^{r^3-2r}$ into \eqref{eq:endpoint-centers} and adding $2^{-r^3}$ give
    \[
        2^{-r} + 4r\,2^{r^3-2r} 2^{-r^3}+2^{-r^3} = 2^{-r} + 4r\,2^{-2r}+2^{-r^3} < 2^{-r} + 2^{-r-1} = 3\cdot2^{-r-1}.
    \]
    Hence the intervals
    \begin{equation}\label{eq:local-deformation-intervals}
        L_{r,s,\ell}:=\big[z_{r,s,\ell}-2^{-r^3}, \, z_{r,s,\ell}+2^{-r^3}\big]
    \end{equation}
    are contained in $D_r$ and are pairwise disjoint. Since $2^{-r^3-s}\le\frac12\,2^{-r^3}$, the supports of all the tents $\varphi_{r,s,\ell}$ are also pairwise disjoint.

    Define
    \begin{equation}\label{eq:endpoint-noninvariant-function}
        x(t) := \sum_{r=8}^\infty 2^{-\frac{r^3-2r}{p}} \sum_{s=1}^{r}\sum_{\ell=1}^{2^{r^3-2r}} \varphi_{r,s,\ell}(t), \qquad t\in[0,1].
    \end{equation}
    Since the supports are pairwise disjoint and $2^{-\frac{r^3-2r}{p}} \xlongrightarrow{r\to\infty} 0$, the series in \eqref{eq:endpoint-noninvariant-function} converges uniformly to a continuous function.

    We first determine the H\"older regularity of $x$. By \eqref{eq:normalized-tent}, every nonzero Faber--Schauder coefficient at level $r^3+s$ is equal to
    \[
        \theta_{r^3+s,k}^{x,\T} = 2^{\frac{r^3+s}{2}+1}2^{-\frac{r^3-2r}{p}},
    \]
    and there are exactly $2^{r^3-2r}$ such coefficients at that level. We have
    \begin{equation*}
        2^{(r^3+s)(\alpha-\frac12)} |\theta_{r^3+s,k}^{x,\T}| = 2^{1-(\frac1p-\alpha)r^3+\frac{2r}{p}+\alpha s} \le2^{1-(\frac1p-\alpha)r^3+(\frac2p+\alpha)r}.
    \end{equation*}
    If $0<\alpha<1/p$, then the last expression is bounded uniformly in $r$, $s$, and $k$; the dyadic Ciesielski characterization \cite[Theorem~3.4]{fake_fBM} therefore gives
    \begin{equation}\label{eq:endpoint-function-holder}
        x\in C^\alpha([0,1]), \qquad 0<\alpha<\frac1p.
    \end{equation}

    At every active level $r^3+s$, the levelwise coefficient quantity of \eqref{eq:dyadic-index-coeff} satisfies, for $q>1$,
    \begin{equation}   \label{eq:endpoint-active-energy}
        \xi_{r^3+s}^{(q)}(x) =2^{-\frac{q(r^3+s)}2}2^{r^3-2r} \Big(2^{\frac{r^3+s}{2}+1} 2^{-\frac{r^3-2r}{p}}\Big)^q = 2^q\,2^{(r^3-2r)(1-\frac qp)}.
    \end{equation}
    At every other level, the corresponding quantity is zero. In particular, $\sup_{n\ge0}\xi_n^{(p)}(x)=2^p<\infty$, whereas $\xi_{r^3+s}^{(q)}(x) \xlongrightarrow{r\to\infty}\infty$ for every $q<p$. The dyadic coefficient characterization in \cite[Theorem~4.3 and Corollary~4.4]{das-kim2024} yields
    \begin{equation}\label{eq:endpoint-dyadic-membership}
        x\in\mathcal X_\T^p, \qquad p_\T(x)=p.
    \end{equation}

    We next construct a partition sequence which samples all the peaks in the $r$-th batch simultaneously. On each interval $L_{r,s,\ell}$ in \eqref{eq:local-deformation-intervals}, define
    \begin{equation}\label{eq:local-deformation}
        \Psi(t):=
        \begin{cases}
            \displaystyle
            t+2^{-s-1}\big(t-z_{r,s,\ell}+2^{-r^3}\big), & \qquad t \in [z_{r,s,\ell}-2^{-r^3}, \, z_{r,s,\ell}],\\[2mm]
            \displaystyle
            t+2^{-r^3-s-1}-2^{-s-1}\big(t-z_{r,s,\ell}\big), & \qquad t \in [z_{r,s,\ell}, \, z_{r,s,\ell}+2^{-r^3}].
        \end{cases}
    \end{equation}
    Outside the union of these intervals, set $\Psi(t):=t$. The intervals $L_{r,s,\ell}$ are pairwise disjoint, and $\Psi$ agrees with the identity at their endpoints. Hence $\Psi$ is continuous on $(0,1]$. Moreover, $\Psi$ maps each $L_{r,s,\ell}$ onto itself, and all intervals in the $r$-th batch are contained in $D_r=[2^{-r},\,3\cdot2^{-r-1}]$. Since $\sup D_r\to0$ as $r\to\infty$, we have $\Psi(t)\to0$ as $t\to0$. Therefore, $\Psi$ is continuous on $[0,1]$.
    
    Moreover, $\Psi(z_{r,s,\ell})=z_{r,s,\ell}+2^{-r^3-s-1}=y_{r,s,\ell}$, while $\Psi$ fixes the two endpoints of $L_{r,s,\ell}$. Every linear piece of $\Psi$ has slope $1\pm2^{-s-1}$, which lies between $3/4$ and $5/4$. Hence $\Psi$ is an increasing bi-Lipschitz homeomorphism of $[0,1]$.

    Define
    \begin{equation}\label{eq:deformed-dyadic-partition}
        \pi^n := \Psi(\T^n), \qquad n\ge0.
    \end{equation}
    The sequence $\pi$ is refining, and every interval $I$ of $\pi^n$ satisfies
    \[
        \frac34\,2^{-n}\le |I|\le\frac54\,2^{-n}.
    \]
    Thus $\pi$ is balanced. Moreover, every interval has exactly two children. Since $\pi^n$ consists of $2^n$ intervals whose lengths sum to one, $|\pi^n|\ge2^{-n}$, and hence
    \begin{equation}\label{eq:deformed-mesh-ratios}
        \frac85\le\frac{|\pi^n|}{|\pi^{n+1}|}\le\frac{10}{3}, \qquad n\ge0.
    \end{equation}
    Consequently, $\pi$ is a balanced complete refining partition sequence.

    For fixed $r,s,\ell$, the points $z_{r,s,\ell}-2^{-r^3}$, $z_{r,s,\ell}$, $z_{r,s,\ell}+2^{-r^3}$ are three consecutive points of $\T^{r^3}$. Their images under $\Psi$ are $z_{r,s,\ell}-2^{-r^3}$, $y_{r,s,\ell}$, $z_{r,s,\ell}+2^{-r^3}$, respectively, and are three consecutive points of $\pi^{r^3}$. Since the supports of the tents are pairwise disjoint,
    \[
        x\big(z_{r,s,\ell}-2^{-r^3}\big)=0, \qquad x(y_{r,s,\ell})=2^{-\frac{r^3-2r}{p}}, \qquad x\big(z_{r,s,\ell}+2^{-r^3}\big)=0.
    \]
    Each peak therefore contributes
    \[
        2\Big(2^{-\frac{r^3-2r}{p}}\Big)^p = 2^{1-r^3+2r}
    \]
    to the $p$-th variation of $x$ along $\pi^{r^3}$. Since the $r$-th batch contains $r\,2^{r^3-2r}$ peaks,
    \[
        [x]_{\pi^{r^3}}^{(p)}(1) \ge r\,2^{r^3-2r} \, 2^{1-r^3+2r} = 2r \xlongrightarrow{r\to\infty}\infty.
    \]
    Thus $x\notin\mathcal X_\pi^p$.

    Finally, \eqref{eq:endpoint-function-holder} shows that $x$ has positive H\"older regularity, and both $\pi$ and $\sigma=\T$ satisfy the assumptions of Theorem~\ref{thm:main-intro}. Combining that theorem with \eqref{eq:endpoint-dyadic-membership} proves \eqref{eq:critical-space-noninvariance}.
\end{proof}

\begin{remark}\label{rem:sufficient-alignment}
    If two refining partition sequences $\pi$ and $\sigma$ mutually refine each other with uniformly bounded subdivision multiplicities, then $\mathcal X_\pi^p=\mathcal X_\sigma^p$ with equivalent norms for every $p\ge1$. Indeed, if $Q$ refines $P$ and divides each interval of $P$ into at most $K$ intervals, then
    \[
        [x]_P^{(p)}(T)\le K^{p-1}[x]_Q^{(p)}(T).
    \]
    Proposition~\ref{prop:critical-space-noninvariance} shows that the balancedness and mesh assumptions of Theorem~\ref{thm:main-intro} do not imply such a relation. In its construction, an interval of $\pi^{r^3}$ is divided into $2^{r+1}+1$ intervals by the first dyadic level which contains all its endpoints, and this number diverges as $r\to\infty$.
\end{remark}

\bigskip

\section{Classical \texorpdfstring{$p$}{p}-variation spaces and dyadic Schauder coefficients}   \label{sec:classical-pvar-spaces}

The equality $p_\T(x)=p_{\cvar}(x)$ in Theorem~\ref{thm:main-intro} allows us to relate the Banach spaces $\mathcal X_\T^p$ studied in \cite{das-kim2024} to the classical $p$-variation spaces $\mathcal C^{p\text{-}\cvar}$ defined below. We first establish function-space relations for an arbitrary partition sequence containing a regular skeleton, including a compact subcritical embedding into the vanishing $p$-variation space. We then specialize to the dyadic partition sequence and characterize the classical variation index in terms of the levelwise Faber--Schauder quantities. At a fixed exponent, this leads to a coefficient-space refinement of the subcritical inclusion and to a strict hierarchy between the resulting critical spaces.

Throughout this section, we work on the unit interval $[0,1]$. This entails no loss of generality, since the linear time change for a partition sequence $\pi$ of $[0,T]$
\[
    \widetilde\pi^n:=\{t/T:t\in\pi^n\}, \qquad \widetilde x(t):=x(Tt), \quad t\in[0,1],
\]
preserves the $p$-th variation and the classical $p$-variation:
\[
    [\widetilde x]_{\widetilde\pi^n}^{(p)}(1) = [x]_{\pi^n}^{(p)}(T), \qquad \|\widetilde x\|_{p\text{-}\cvar;[0,1]} = \|x\|_{p\text{-}\cvar;[0,T]}.
\]

For $p\ge1$, we recall the Banach space $\mathcal X_\pi^p$ in \eqref{eq:Xp}, equipped with the norm $\|x\|_\pi^{(p)}$ of \eqref{eq:Xp-norm}. We also consider the classical $p$-variation Banach space
\begin{equation}\label{eq:classical-pvar-space}
    \mathcal C^{p\text{-}\cvar}([0,1]) := \big\{x\in C^0([0,1]):\|x\|_{p\text{-}\cvar}<\infty\big\},
\end{equation}
equipped with the norm
\[
    \|x\|_{\mathcal C^{p\text{-}\cvar}} := |x(0)|+\|x\|_{p\text{-}\cvar}.
\]
Let $\mathrm{PL}([0,1])$ denote the space of continuous piecewise linear functions on $[0,1]$. We further define
\begin{equation}\label{eq:vanishing-pvar-space}
    \mathcal C^{0,p\text{-}\cvar}([0,1]) := \overline{\mathrm{PL}([0,1])}^{\,\|\cdot\|_{\mathcal C^{p\text{-}\cvar}}},
\end{equation}
the closed subspace of $\mathcal C^{p\text{-}\cvar}([0,1])$ consisting of paths which can be approximated by piecewise linear paths in the classical $p$-variation norm. This space $\mathcal C^{0,p\text{-}\cvar}([0,1])$ plays a central role in rough path theory; see, for instance, \cite{friz2010,FrizHairer}.

\medskip

\subsection{Function-space relations}

For two Banach spaces $B_1$ and $B_2$, we denote by
\begin{equation*}
    \mathcal L(B_1,B_2) :=\{A:B_1\to B_2:A\text{ is linear and bounded}\}
\end{equation*}
the space of bounded linear operators from $B_1$ to $B_2$. This space is equipped with the operator norm
\begin{equation*}
    \|A\|_{\mathcal L(B_1,B_2)}:=\sup_{\|x\|_{B_1}\le1}\|Ax\|_{B_2} = \sup_{x\in B_1\setminus\{0\}} \frac{\|Ax\|_{B_2}}{\|x\|_{B_1}}.
\end{equation*}
Under this norm, $\mathcal L(B_1,B_2)$ is a Banach space.

We first derive an inclusion result from Proposition~\ref{prop:subcritical}. We equip the intersection space
\[
    E_{\alpha,q}^\pi:=C^\alpha([0,1])\cap\mathcal X_\pi^q
\]
with the norm
\begin{equation}    \label{def:E-alpha-q-norm}
    \|x\|_{E_{\alpha,q}^\pi} := |x|_{C^\alpha}+\|x\|_\pi^{(q)}.
\end{equation}
The completeness argument in \cite[Proposition~5.2]{das-kim2024} applies verbatim to the present range of $\alpha$ and $q$, and shows that $E_{\alpha,q}^\pi$ is a Banach space under this norm.

\begin{proposition}[Subcritical approximation and compact embedding]\label{prop:subcritical-compact}
    Let $\pi\in\Pi([0,1])$ contain a regular skeleton $\rho=(\pi^{n_m})_{m\ge0}$, let $\alpha\in(0,1]$, and let $1<q<p<\infty$. Then 
    \begin{equation}\label{eq:subcritical-vanishing-pvar}
        E_{\alpha,q}^\pi \subset\mathcal C^{0,p\text{-}\cvar}([0,1]).
    \end{equation}
    Moreover, when the space $E_{\alpha,q}^\pi$ is equipped with the norm $\|x\|_{E_{\alpha,q}^\pi}$, the embedding in \eqref{eq:subcritical-vanishing-pvar} is continuous and compact.
\end{proposition}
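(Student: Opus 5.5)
The plan is to use the tail estimate \eqref{eq:subcritical-tail} of Proposition~\ref{prop:subcritical} with $r=p$, applied to the regular skeleton $\rho$. Set $M_\rho:=\sup_m M_m(\rho)<\infty$ and $\delta:=1-q/p>0$. For $x\in E_{\alpha,q}^\pi$, \eqref{eq:block-modulus} and the H\"older bound give $\|u_m^\rho\|_\infty\le |x|_{C^\alpha}|\rho^m|^\alpha$. Hence
\[
    \|x-x_N^\rho\|_{p\text{-}\cvar}\le C\,\big(\|x\|_\pi^{(q)}\big)^{q/p}\,|x|_{C^\alpha}^{\delta}\sum_{m\ge N}|\rho^m|^{\alpha\delta},
\]
where $C=2^{1+\frac{q-1}{p}}M_\rho^{\frac{q-1}{p}}$. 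By \eqref{con:regular-skeleton} with $\gamma=\alpha\delta$, the tail sum $\varepsilon_N:=\sum_{m\ge N}|\rho^m|^{\alpha\delta}$ tends to zero. Since $x(0)=x_N^\rho(0)$, this gives $\|x-x_N^\rho\|_{\mathcal C^{p\text{-}\cvar}}\le C\|x\|_{E^\pi_{\alpha,q}}\,\varepsilon_N$, because both $(\|x\|^{(q)}_\pi)^{q/p}|x|_{C^\alpha}^\delta$ factors are bounded by $\|x\|_{E}^{q/p+\delta}=\|x\|_E$. Each $x_N^\rho$ is piecewise linear, so $x\in\mathcal C^{0,p\text{-}\cvar}$, which proves \eqref{eq:subcritical-vanishing-pvar}.

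For continuity, note $\|x_0^\rho\|_{p\text{-}\cvar}=|x(1)-x(0)|\le 2\|x\|_\infty\le2\|x\|_\pi^{(q)}$, since $\rho^0=\pi^{n_0}$ is a finite partition. A single linear segment is only guaranteed if $n_0=0$; in general $x_0^\rho$ is piecewise linear on the fixed partition $\rho^0$ with $K$ intervals, so its $p$-variation is bounded by $K\cdot2\|x\|_\infty$. Adding the $N=0$ tail bound gives $\|x\|_{\mathcal C^{p\text{-}\cvar}}\le C'\|x\|_{E^\pi_{\alpha,q}}$, with $C'$ depending only on $\rho,\alpha,p,q$.

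For compactness, the operators $P_N:x\mapsto x_N^\rho$ are linear from $E^\pi_{\alpha,q}$ into the finite-dimensional space of piecewise linear functions on $\rho^N$. They are bounded, since $\|x_N^\rho\|_{\mathcal C^{p\text{-}\cvar}}$ is controlled by $\#\rho^N$ times $\|x\|_\infty$. Hence each $P_N$ has finite rank and is compact. The tail estimate gives $\|\iota-P_N\|_{\mathcal L(E^\pi_{\alpha,q},\,\mathcal C^{0,p\text{-}\cvar})}\le C\varepsilon_N\to0$, where $\iota$ is the embedding. The embedding is therefore a norm limit of finite-rank operators and is compact.

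The main obstacle is the homogeneity bookkeeping in the tail bound. The mixed factor $(\sup_n[x]^{(q)}_{\pi^n})^{1/p}\|u_m\|_\infty^{1-q/p}$ has to be dominated linearly by $\|x\|_E$, uniformly in $x$. Since the exponents $q/p$ and $1-q/p$ sum to one, this follows from $a^{\theta}b^{1-\theta}\le a+b$. The domination must be uniform, and not just pointwise in $x$, in order to get the operator-norm convergence used in the compactness step.
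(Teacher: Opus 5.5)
Your proposal is correct and follows essentially the same route as the paper. Like the paper, you apply the tail estimate \eqref{eq:subcritical-tail} with $r=p$ and the H\"older block bound, dominate the mixed factor linearly by $\|x\|_{E_{\alpha,q}^\pi}$ (via $a^{\theta}b^{1-\theta}\le a+b$ where the paper uses weighted AM--GM), and approximate the inclusion in operator norm by the finite-rank operators $P_N x=x_N^\rho$. Your separate continuity argument, including the correct remark that $x_0^\rho$ need not be a single segment when $n_0>0$, is valid but redundant, since $\|\iota\|\le\|P_N\|+\|\iota-P_N\|$ already gives boundedness.
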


\begin{proof}
    By \eqref{eq:block-modulus}, $\|u_m^\rho\|_\infty\le |x|_{C^\alpha}|\rho^m|^\alpha$.
    Applying \eqref{eq:subcritical-tail} with $r=p$ gives
    \begin{equation}\label{eq:subcritical-holder-tail}
        \|x-x_N^\rho\|_{p\text{-}\cvar} \le 2^{1+\frac{q-1}{p}} \Big(\sup_{m\ge0}M_m(\rho)\Big)^{\frac{q-1}{p}} \Big(\sup_{n\ge0}[x]_{\pi^n}^{(q)}(1)\Big)^{\frac1p} |x|_{C^\alpha}^{1-\frac qp} \sum_{m=N}^{\infty}|\rho^m|^{\alpha(1-\frac qp)}.
    \end{equation}
    Since $\rho$ is regular,
    \[
        \sum_{m=0}^\infty|\rho^m|^{\alpha(1-\frac qp)}<\infty,
    \]
    and hence $x_N^\rho\to x$ as $N\to\infty$ in the classical $p$-variation norm. Since every $x_N^\rho$ is piecewise linear, this proves \eqref{eq:subcritical-vanishing-pvar}.

    Let
    \[
        \mathcal I:E_{\alpha,q}^\pi\longrightarrow\mathcal C^{0,p\text{-}\cvar}([0,1]), \qquad \mathcal I x:=x,
    \]
    denote the inclusion mapping. For every $N\ge0$, define
    \[
        P_N:E_{\alpha,q}^\pi\longrightarrow\mathcal C^{0,p\text{-}\cvar}([0,1]), \qquad P_Nx:=x_N^\rho.
    \]
    If $\rho^N=\{0=t_0^N<t_1^N<\cdots<t_{M_N}^N=1\}$, then the range of $P_N$ is contained in the finite-dimensional space
    \[
        V_N^\rho := \{f\in C^0([0,1]):f\text{ is linear on every interval of }\rho^N\}.
    \]
    Moreover, $P_Nx$ is determined by the finitely many continuous evaluation mappings
    \[
        x\longmapsto x(t_0^N),\ldots,x(t_{M_N}^N),
    \]
    so $P_N$ is a bounded finite-rank operator and hence compact. Since $x_N^\rho(0)=x(0)$, one has
    \[
        \|(\mathcal I-P_N)x\|_{\mathcal C^{0,p\text{-}\cvar}} = \|x-x_N^\rho\|_{p\text{-}\cvar}.
    \]
    Set
    \[
        |x|^{(q)}_{\pi} := \Big(\sup_{n\ge0}[x]_{\pi^n}^{(q)}(1)\Big)^{\frac1q}.
    \]
    Since $|x|^{(q)}_{\pi}\le\|x\|_\pi^{(q)}$, the weighted arithmetic--geometric mean inequality with \eqref{def:E-alpha-q-norm} gives
    \[
        (|x|^{(q)}_{\pi})^{\frac qp}|x|_{C^\alpha}^{1-\frac qp}
        \le\frac qp |x|^{(q)}_{\pi} + \Big(1-\frac qp\Big)|x|_{C^\alpha}
        \le\|x\|_{E_{\alpha,q}^\pi}.
    \]
    Hence, the inequality \eqref{eq:subcritical-holder-tail} yields the following bound for the operator norm
    \[
        \|\mathcal I-P_N\|_{\mathcal L(E_{\alpha,q}^\pi,\mathcal C^{0,p\text{-}\cvar})}
        \le 2^{1+\frac{q-1}{p}} \Big(\sup_{m\ge0}M_m(\rho)\Big)^{\frac{q-1}{p}} \sum_{m=N}^\infty|\rho^m|^{\alpha(1-\frac qp)}.
    \]
    The right-hand side converges to zero because $\rho$ is regular. Thus, $P_N\to\mathcal I$ as $N\to\infty$ in operator norm. Since every $P_N$ is compact, the inclusion mapping $\mathcal I$ is compact, and in particular continuous.
\end{proof}

Since each $\pi^n$ is one of the finite partitions appearing in \eqref{eq:classical-pvar}, one has $[x]_{\pi^n}^{(p)}(1)\le\|x\|_{p\text{-}\cvar}^p$ for every $n\ge0$. Moreover, $\|x\|_\infty\le|x(0)|+\|x\|_{p\text{-}\cvar}$. Hence, the following inclusion holds without any additional assumption on the partition sequence or regularity assumption on the function.

\begin{proposition}\label{prop:classical-into-partition}
    Let $\pi\in\Pi([0,1])$ and $p\ge1$. Then the inclusion
    \begin{equation}\label{eq:classical-into-partition}
        \mathcal C^{p\text{-}\cvar}([0,1])\subset\mathcal X_\pi^p
    \end{equation}
    is continuous. More precisely, $\|x\|_\pi^{(p)} \le\|x\|_{\mathcal C^{p\text{-}\cvar}}$ for $x\in\mathcal C^{p\text{-}\cvar}([0,1])$.
\end{proposition}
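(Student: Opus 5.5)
The proof will be a direct comparison of the two norms, with no appeal to the partition structure beyond the fact that each $\pi^n$ is a finite partition of $[0,1]$. Fix $x\in\mathcal C^{p\text{-}\cvar}([0,1])$. I will bound each of the two quantities inside the maximum in \eqref{eq:Xp-norm} by $\|x\|_{\mathcal C^{p\text{-}\cvar}}=|x(0)|+\|x\|_{p\text{-}\cvar}$.

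\textbf{Variation term.} For every $n\ge0$, the partition $\pi^n$ is one of the admissible partitions in \eqref{eq:classical-pvar}, so
\[
    [x]_{\pi^n}^{(p)}(1)\le\|x\|_{p\text{-}\cvar}^p .
\]
Taking the supremum over $n$ and then the $p$-th root gives
\[
    |x(0)|+\sup_{n\ge0}\big([x]_{\pi^n}^{(p)}(1)\big)^{1/p}\le|x(0)|+\|x\|_{p\text{-}\cvar}.
\]

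\textbf{Sup-norm term.} For $t\in(0,1]$, use the two-point partition $\{0,t,1\}$ (or $\{0,1\}$ if $t=1$) as a test partition. This gives
\[
    |x(t)-x(0)|^p\le\|x\|_{p\text{-}\cvar}^p ,
\]
hence $|x(t)|\le|x(0)|+\|x\|_{p\text{-}\cvar}$. Taking the supremum over $t$ yields $\|x\|_\infty\le\|x\|_{\mathcal C^{p\text{-}\cvar}}$.

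Combining the two bounds gives $\|x\|_\pi^{(p)}\le\|x\|_{\mathcal C^{p\text{-}\cvar}}$. In particular $x\in\mathcal X_\pi^p$, so the inclusion \eqref{eq:classical-into-partition} holds. The inclusion map is linear with operator norm at most one, and is therefore continuous.

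There is no real obstacle here. The only points to check are that the mixed form of the norm \eqref{eq:Xp-norm}, a maximum of a sup-norm and a seminorm shifted by $|x(0)|$, is matched by the single quantity $|x(0)|+\|x\|_{p\text{-}\cvar}$, and that the sup bound uses a partition containing $t$, which is allowed since the supremum in \eqref{eq:classical-pvar} runs over all finite partitions.
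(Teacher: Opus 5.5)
Your proof is correct and matches the paper's argument. The paper likewise observes that each $\pi^n$ is an admissible partition in \eqref{eq:classical-pvar}, so that $[x]_{\pi^n}^{(p)}(1)\le\|x\|_{p\text{-}\cvar}^p$, and that $\|x\|_\infty\le|x(0)|+\|x\|_{p\text{-}\cvar}$; you simply justify the second bound explicitly via the test partition $\{0,t,1\}$, which the paper leaves implicit.
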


For $p>1$ and $\pi\in\Pi([0,1])$, introduce the subcritical and supercritical spaces
\begin{equation}\label{eq:pminus-pplus-spaces}
    \mathcal X_\pi^{p-}:=\bigcup_{1<q<p}\mathcal X_\pi^q,\qquad
    \mathcal X_\pi^{p+}:=\bigcap_{q>p}\mathcal X_\pi^q,\qquad
    \mathcal C_{\cvar}^{p+}:=\bigcap_{q>p}\mathcal C^{q\text{-}\cvar}([0,1]).
\end{equation}

Combining the previous results, we obtain the following inclusion chain.

\begin{corollary}[Subcritical and supercritical relations]\label{cor:subcritical-space-relations}
    Let $\pi\in\Pi([0,1])$ contain a regular skeleton, and let $\alpha\in(0,1]$ and $p>1$. Then
    \begin{equation}\label{eq:subcritical-space-sandwich}
        C^\alpha([0,1])\cap\mathcal X_\pi^{p-} \subset C^\alpha([0,1])\cap\mathcal C^{0,p\text{-}\cvar}([0,1]) \subset C^\alpha([0,1])\cap\mathcal C^{p\text{-}\cvar}([0,1]) \subset C^\alpha([0,1])\cap\mathcal X_\pi^p.
    \end{equation}
    Moreover,
    \begin{equation}\label{eq:supercritical-space-equality}
        C^\alpha([0,1])\cap\mathcal X_\pi^{p+} = C^\alpha([0,1])\cap\mathcal C_{\cvar}^{p+}.
    \end{equation}
\end{corollary}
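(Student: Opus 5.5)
The chain \eqref{eq:subcritical-space-sandwich} is proved link by link, intersecting each known inclusion with $C^\alpha([0,1])$. The first inclusion follows from Proposition~\ref{prop:subcritical-compact}. Let $x\in C^\alpha([0,1])\cap\mathcal X_\pi^{p-}$. Then $x\in\mathcal X_\pi^q$ for some $1<q<p$, so $x\in E_{\alpha,q}^\pi$, and \eqref{eq:subcritical-vanishing-pvar} gives $x\in\mathcal C^{0,p\text{-}\cvar}([0,1])$. The second inclusion is immediate, since $\mathcal C^{0,p\text{-}\cvar}$ is by definition a closed subspace of $\mathcal C^{p\text{-}\cvar}$. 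The third inclusion is Proposition~\ref{prop:classical-into-partition}, which requires no assumption on $\pi$ or on $x$.

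For \eqref{eq:supercritical-space-equality}, the inclusion ``$\supset$'' again follows from Proposition~\ref{prop:classical-into-partition}, applied at each exponent $q>p$: we have $\mathcal C^{q\text{-}\cvar}\subset\mathcal X_\pi^q$, and intersecting over $q>p$ gives the claim.

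For ``$\subset$'', fix $x\in C^\alpha([0,1])\cap\mathcal X_\pi^{p+}$ and $r>p$. We must show $x\in\mathcal C^{r\text{-}\cvar}$. Choose $q$ with $p<q<r$. Then $x\in\mathcal X_\pi^q$, so $x\in E_{\alpha,q}^\pi$ with $1<q<r$. Applying Proposition~\ref{prop:subcritical-compact} with $r$ in place of $p$ gives $x\in\mathcal C^{0,r\text{-}\cvar}\subset\mathcal C^{r\text{-}\cvar}$. Since $r>p$ was arbitrary, $x\in\mathcal C_{\cvar}^{p+}$.

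An alternative route uses indices. Membership in $\mathcal X_\pi^{p+}$ gives $p_\pi(x)\le p$, and Corollary~\ref{cor:holder-skeleton} gives $p_{\cvar}(x)=p_\pi(x)\le p$, hence $\|x\|_{r\text{-}\cvar}<\infty$ for all $r>p$. The direct argument above avoids this detour.

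There is no genuine obstacle. The only points to check are that the hypothesis $q>1$ of Proposition~\ref{prop:subcritical-compact} holds (it does, since $q>p>1$ in the second part and $q>1$ by definition of $\mathcal X_\pi^{p-}$ in the first), and that the strictness $q<r$ needed there is available. In the supercritical part this strictness is ensured by choosing the intermediate exponent $q$ strictly between $p$ and $r$.
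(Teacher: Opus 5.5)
Your proof is correct and follows the paper. The sandwich \eqref{eq:subcritical-space-sandwich} is argued exactly as in the paper. For \eqref{eq:supercritical-space-equality}, the paper uses the index route you list as an alternative: the threshold property \eqref{eq:index-threshold} combined with Corollary~\ref{cor:holder-skeleton}. Your direct use of Proposition~\ref{prop:subcritical-compact} with exponents $p<q<r$ just unwinds that corollary's proof, which itself rests on Proposition~\ref{prop:subcritical} with an intermediate exponent, and it gives membership in $\mathcal C^{0,r\text{-}\cvar}([0,1])$ for every $r>p$ along the way.
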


\begin{proof}
    Suppose that $x\in C^\alpha([0,1])\cap\mathcal X_\pi^{p-}$. Then $x\in\mathcal X_\pi^q$ for some $1<q<p$. Since $\pi$ contains a regular skeleton, Proposition~\ref{prop:subcritical-compact} gives $x\in\mathcal C^{0,p\text{-}\cvar}([0,1])$. The second inclusion in \eqref{eq:subcritical-space-sandwich} follows from the definition of $\mathcal C^{0,p\text{-}\cvar}([0,1])$, and the third follows from Proposition~\ref{prop:classical-into-partition}.

    Next, the threshold property \eqref{eq:index-threshold} implies $x\in\mathcal X_\pi^{p+}$ if and only if $p_\pi(x)\le p$; similarly, $x\in\mathcal C_{\cvar}^{p+}$ if and only if $p_{\cvar}(x)\le p$. Then Corollary~\ref{cor:holder-skeleton} proves \eqref{eq:supercritical-space-equality}.
\end{proof}

\begin{remark}[The H\"older threshold $1/p$]    \label{rem:holder-threshold}
    Recall that every $x\in C^\alpha([0,1])$ has finite classical $r$-variation for every $r\ge1/\alpha$. Hence, if $\alpha>1/p$, choose $q$ such that $\max \{1, 1/\alpha\}<q<p$. Then
    \[
        C^\alpha([0,1]) \subset\mathcal C^{q\text{-}\cvar}([0,1]) \subset\mathcal X_\pi^q \subset\mathcal X_\pi^{p-}.
    \]
    Proposition~\ref{prop:subcritical-compact} also gives $C^\alpha([0,1]) \subset\mathcal C^{0,p\text{-}\cvar}([0,1])$. Consequently, all four spaces in \eqref{eq:subcritical-space-sandwich} coincide with $C^\alpha([0,1])$. Moreover, both sides of \eqref{eq:supercritical-space-equality} are also equal to $C^\alpha([0,1])$.

    At the critical H\"older exponent $\alpha=1/p$, one still has $C^{1/p}([0,1]) \subset\mathcal C^{p\text{-}\cvar}([0,1]) \subset\mathcal X_\pi^p$. Hence, the last two spaces of \eqref{eq:subcritical-space-sandwich} collapse to $C^{1/p}([0,1])$ and the sandwich takes the form
    \[
        C^{\frac1p}([0,1])\cap\mathcal X_\pi^{p-} \subset C^{\frac1p}([0,1])\cap\mathcal C^{0,p\text{-}\cvar}([0,1]) \subset C^{\frac1p}([0,1]).
    \]
    The two spaces on the left may still be proper subspaces of $C^{1/p}([0,1])$; thus, unlike the case $\alpha>1/p$, nontrivial endpoint phenomena remain. Both spaces of \eqref{eq:supercritical-space-equality} also collapse to $C^{1/p}([0,1])$. Thus, the genuinely nontrivial range of \eqref{eq:subcritical-space-sandwich} is $0<\alpha\le1/p$, with $\alpha=1/p$ representing the critical H\"older endpoint.
\end{remark}

The inclusions in \eqref{eq:subcritical-space-sandwich} leave open only the critical endpoint. Equality of the variation indices does not determine membership at that endpoint. In fact, the path constructed in Proposition~\ref{prop:critical-space-noninvariance} satisfies
\[
    p_{\cvar}(x)=p, \qquad x\notin\mathcal C^{p\text{-}\cvar}([0,1]).
\]
Indeed, otherwise Proposition~\ref{prop:classical-into-partition} would imply $x\in\mathcal X_\pi^p$, contrary to \eqref{eq:critical-space-noninvariance}.

\medskip

\subsection{A Faber--Schauder characterization of the classical variation index}

The results in the previous subsection are formulated for an arbitrary partition sequence $\pi$ containing a regular skeleton. From this subsection onward, we specialize to the dyadic partition sequence $\T$, for which the levelwise Faber--Schauder quantities take their simplest form. Under the stronger assumptions imposed in \cite[Theorem~4.3]{das-kim2024}, analogous coefficient formulas are available for more general partition sequences. We restrict attention to $\T$ in order to avoid introducing a second family of partition-dependent coefficient notation.

Recall the dyadic Faber--Schauder functions $(e_{m,k}^\T)_{m\ge0,\,k\in I_m}$ from \eqref{Eq: e_mk} on $[0,1]$ with the index set $I_m:=\{0,1,\ldots,2^m-1\}$, and the corresponding Faber--Schauder coefficients $(\theta_{m,k}^{x,\T})_{m,k}$ of \eqref{eq:theta_mk} from the representation \eqref{Eq: Schauder representation} of $x\in C^0([0,1])$. For $q>1$, recall from \eqref{eq:dyadic-index-coeff} the levelwise Schauder quantity
\begin{equation}\label{eq:dyadic-xi}
    \xi_n^{(q)}(x) := 2^{-\frac{nq}{2}} \sum_{k\in I_n}|\theta_{n,k}^{x,\T}|^q, \qquad n\ge0.
\end{equation}
For the dyadic partition sequence $\T$, the results of \cite[Theorem~4.3 and Corollary~4.4]{das-kim2024} yield for any $q>1$
\begin{equation}\label{eq:dyadic-Xp-xi}
    x\in\mathcal X_\T^q \quad \text{if and only if} \quad \limsup_{n\to\infty}\xi_n^{(q)}(x)<\infty
\end{equation}
and
\begin{equation}\label{eq:dyadic-index-xi}
    p_\T(x)=\inf\Big\{q>1:\limsup_{n\to\infty}\xi_n^{(q)}(x)<\infty\Big\}.
\end{equation}
Since each $\xi_n^{(q)}(x)$ is finite, the condition $\limsup_{n\to\infty}\xi_n^{(q)}(x)<\infty$ is equivalent to $\sup_{n\ge0}\xi_n^{(q)}(x)<\infty$. Combining \eqref{eq:dyadic-index-xi} with Corollary~\ref{cor:dini-main} gives the following characterization of the classical $p$-variation index.

\begin{corollary}[Faber--Schauder characterization of the classical variation index]\label{cor:classical-index-schauder}
    Let $x\in C^0([0,1])$ have an all-orders Dini modulus. Then
    \[
        p_{\cvar}(x) = \inf \Big\{q>1:\limsup_{n\to\infty}\xi_n^{(q)}(x) < \infty\Big\}.
    \]
    Moreover, for every $p>1$,
    \[
        x\in\mathcal C_{\cvar}^{p+} \quad \iff \quad p_{\cvar}(x)\le p \quad \iff \quad \limsup_{n\to\infty}\xi_n^{(q)}(x)<\infty \quad\text{for every }q>p.
    \]
\end{corollary}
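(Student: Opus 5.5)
The first identity is a direct combination of two earlier results. Since $x$ has an all-orders Dini modulus and the dyadic partition sequence $\T$ of $[0,1]$ satisfies the hypotheses of Theorem~\ref{thm:main-intro} (it is perfectly balanced and $B_\T=2$), Corollary~\ref{cor:dini-main} gives $p_{\cvar}(x)=p_\T(x)$. Substituting the coefficient formula \eqref{eq:dyadic-index-xi} for $p_\T(x)$ yields
\[
    p_{\cvar}(x)=\inf\Big\{q>1:\limsup_{n\to\infty}\xi_n^{(q)}(x)<\infty\Big\}.
\]
One point needs checking: \eqref{eq:dyadic-index-xi} is an infimum over $q>1$, while $p_\T$ is defined as an infimum over $q\ge1$. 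If $p_\T(x)=1$, then \eqref{eq:index-threshold} shows that the sums are bounded for every $q>1$, so the infimum over $q>1$ is also $1$. Hence the two conventions agree, and I will simply quote \eqref{eq:dyadic-index-xi} as stated.

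For the chain of equivalences, fix $p>1$. The first equivalence, $x\in\mathcal C_{\cvar}^{p+}\iff p_{\cvar}(x)\le p$, follows from the threshold behaviour of classical variation. If $\|x\|_{q\text{-}\cvar}<\infty$, then $\|x\|_{r\text{-}\cvar}<\infty$ for every $r>q$, by Lemma~\ref{lem:interpolation-inequality} or simply because $|a|^r\le(2\|x\|_\infty)^{r-q}|a|^q$. So the set $\{q:\|x\|_{q\text{-}\cvar}<\infty\}$ is an up-interval with left endpoint $p_{\cvar}(x)$. It therefore contains every $q>p$ exactly when $p_{\cvar}(x)\le p$.

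The second equivalence uses the same monotonicity at the coefficient level, transported through \eqref{eq:dyadic-Xp-xi}. For every $q>1$, the condition $\limsup_n\xi_n^{(q)}(x)<\infty$ is equivalent to $x\in\mathcal X_\T^q$. By \eqref{eq:index-threshold}, this holds for all $q>p_\T(x)$ and fails for all $q<p_\T(x)$. Suppose $p_{\cvar}(x)=p_\T(x)\le p$. Then every $q>p$ lies strictly above the threshold, so the $\limsup$ is finite. Conversely, suppose the $\limsup$ is finite for every $q>p$. Then the first part of the corollary gives $p_{\cvar}(x)\le q$ for each such $q$, and letting $q\downarrow p$ gives $p_{\cvar}(x)\le p$.

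There is no substantial obstacle here. The only care needed is at the endpoints: handling the case $p_{\cvar}(x)=1$ in the $q>1$ versus $q\ge1$ conventions, and avoiding any claim about the critical exponent $q=p$ itself, which the statement indeed does not make.
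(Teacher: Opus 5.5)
Your proposal is correct and takes essentially the same route as the paper, which obtains the corollary by combining Corollary~\ref{cor:dini-main} (giving $p_{\cvar}(x)=p_\T(x)$) with the dyadic coefficient formula \eqref{eq:dyadic-index-xi}. Your treatment of the equivalences, via monotonicity of finiteness in the exponent together with \eqref{eq:dyadic-Xp-xi} and \eqref{eq:index-threshold}, just makes explicit what the paper leaves implicit.
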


\medskip

\subsection{Coefficient conditions at the critical endpoint}\label{subsec:critical-coefficient-conditions}

We now turn from the location of the critical exponent to membership at a fixed exponent $p$. Uniform boundedness of the levelwise quantities $\xi_n^{(p)}$ is necessary for finite classical $p$-variation, whereas summability of the level norms $(\xi_n^{(p)}(x))^{1/p}$ is sufficient. For $p>1$, define the subspace
\begin{equation}\label{eq:outer-l1-coefficient-space}
    \mathcal S_\T^{p} := \Big\{x\in C^0([0,1]): \sum_{n=0}^\infty\big(\xi_n^{(p)}(x)\big)^{\frac1p}<\infty\Big\},
\end{equation}
with the norm
\begin{equation}\label{eq:S-norm}
    \|x\|_{\mathcal S_\T^p} := |x(0)|+|x(1)-x(0)| + \sum_{n=0}^\infty\big(\xi_n^{(p)}(x)\big)^{\frac1p}.
\end{equation}

\begin{proposition}[Banach structure of the coefficient space]\label{prop:S-Banach}
    For every $p>1$, the space $\mathcal S_\T^p$ is a separable Banach space under the norm \eqref{eq:S-norm}. Moreover, for every $x\in\mathcal S_\T^p$, if $x_N^\T$ denotes its piecewise linear approximation along $\T^N$, then
    \[
        \|x-x_N^\T\|_{\mathcal S_\T^p}
        =\sum_{n=N}^\infty\big(\xi_n^{(p)}(x)\big)^{\frac1p}
        \xlongrightarrow{N\to\infty}0.
    \]
\end{proposition}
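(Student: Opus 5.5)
The plan is to identify $\mathcal S_\T^p$ isometrically with a weighted mixed-norm sequence space, then read off completeness, separability and the tail formula from that identification.

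\emph{The coefficient map.} Consider
\[
    \Theta:x\longmapsto\Big(x(0),\ x(1)-x(0),\ \big(\theta_{n,k}^{x,\T}\big)_{n\ge0,\,k\in I_n}\Big).
\]
Set $w_n:=2^{-n/2}$ and let $\ell^1(\ell^p_{w})$ be the space of sequences $(\theta_{n,k})$ with
\[
    \sum_n w_n\Big(\sum_{k\in I_n}|\theta_{n,k}|^p\Big)^{1/p}<\infty .
\]
By \eqref{eq:dyadic-xi}, $(\xi_n^{(p)}(x))^{1/p}=w_n\|\theta_{n,\cdot}\|_{\ell^p(I_n)}$. Hence $\Theta$ is an isometry from $\mathcal S_\T^p$ into $\R^2\times\ell^1(\ell^p_w)$, where $\R^2$ carries the $\ell^1$ norm. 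It is injective by uniqueness of the Schauder representation \eqref{Eq: Schauder representation}. The norm axioms on $\mathcal S_\T^p$ follow from this. The target space is a Banach space, being an $\ell^1$-sum of finite-dimensional normed spaces. It is separable, since finitely supported rational sequences are dense.

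\emph{Surjectivity, the key step.} Given $(a,b,\theta)$ in the target, define
\[
    x:=a+bt+\sum_n\sum_k\theta_{n,k}e_{n,k}^\T .
\]
At a fixed level $n$ the functions $e_{n,k}^\T$ have disjoint supports and $\|e_{n,k}^\T\|_\infty=2^{-n/2-1}$ by \eqref{eq:normalized-tent}. Therefore
\[
    \Big\|\sum_k\theta_{n,k}e_{n,k}^\T\Big\|_\infty=2^{-n/2-1}\max_k|\theta_{n,k}|\le \tfrac12 w_n\|\theta_{n,\cdot}\|_{\ell^p}.
\]
This is summable in $n$, so the series converges absolutely and uniformly to a continuous function. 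Uniform convergence lets us evaluate \eqref{eq:theta_mk} termwise: the dyadic point formula applied to $e_{n,k}^\T$ returns the Kronecker delta. It follows that $x$ has exactly the coefficients $\theta$, with $x(0)=a$ and $x(1)-x(0)=b$. Thus $\Theta$ is onto, so $\mathcal S_\T^p$ is isometric to a separable Banach space.

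This coefficient recovery is where I expect the most care to be needed. One must check that $e_{m,k}^\T$ vanishes at the dyadic points of level $\le m$, which gives the biorthogonality of the Faber--Schauder system with the functionals \eqref{eq:theta_mk}. This is standard, but it must be verified for all level pairs, including $m<n$ and $m>n$.

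\emph{The tail formula.} The piecewise linear interpolant along $\T^N$ is
\[
    x_N^\T=x(0)+(x(1)-x(0))t+\sum_{n<N}\sum_k\theta_{n,k}^{x,\T}e_{n,k}^\T .
\]
This holds because the partial sum of the series is linear on each interval of $\T^N$ and agrees with $x$ at the points of $\T^N$, since the terms with $n\ge N$ vanish there. Consequently, $x-x_N^\T$ has vanishing boundary data and coefficients $\theta_{n,k}^{x,\T}\mathbf 1_{\{n\ge N\}}$. The norm \eqref{eq:S-norm} therefore gives exactly $\sum_{n\ge N}(\xi_n^{(p)}(x))^{1/p}$. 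This is the tail of a convergent series, so it tends to $0$ as $N\to\infty$.
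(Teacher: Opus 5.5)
Your proposal is correct and follows essentially the same route as the paper. Both identify $\mathcal S_\T^p$ isometrically with $\R^2\oplus_1$ an $\ell^1$-sum of finite-dimensional $\ell^p(I_n)$ spaces (the paper rescales to $2^{-n/2}\theta_{n,k}^{x,\T}$ where you use the weights $w_n$), prove surjectivity through the uniformly convergent Faber--Schauder series using $\|e_{n,k}^\T\|_\infty=2^{-n/2-1}$, and obtain the tail formula by deleting the levels $n\ge N$; your explicit checks of biorthogonality, injectivity and the interpolant formula for $x_N^\T$ supply details the paper leaves implicit.
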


\begin{proof}
    For $x\in\mathcal S_\T^p$, set
    \[
        a_n^x:=\big(2^{-\frac n2}\theta_{n,k}^{x,\T}\big)_{k\in I_n}, \qquad n\ge0.
    \]
    Then $\|a_n^x\|_{\ell^p}=(\xi_n^{(p)}(x))^{1/p}$. Let us consider a space of real sequences
    \[
        \mathfrak s_p := \Big\{a=(a_n)_{n\ge0}:a_n\in\R^{I_n},\ \sum_{n=0}^\infty\|a_n\|_{\ell^p}<\infty\Big\},
    \]
    equipped with the norm $\|a\|_{\mathfrak s_p}:=\sum_{n\ge0}\|a_n\|_{\ell^p}$. This is the $\ell^1$-direct sum of the finite-dimensional Banach spaces $\ell^p(I_n)$ and hence is a Banach space.

    Consider the coefficient mapping
    \[
        \Gamma_p x := \big(x(0), \, x(1)-x(0), \, (a_n^x)_{n\ge0}\big).
    \]
    Equip $\R^2$ with the norm $(a,b)\mapsto|a|+|b|$. By \eqref{eq:S-norm}, $\Gamma_p$ is a linear isometry from $\mathcal S_\T^p$ into $\R^2\oplus_1\mathfrak s_p$. To see that it is onto, let $a,b\in\R$ and $c=(c_n)_{n\ge0}\in\mathfrak s_p$, where $c_n=(c_{n,k})_{k\in I_n}$, and define
    \[
        x(t):=a+bt+\sum_{n=0}^\infty\sum_{k\in I_n}2^{\frac n2}c_{n,k}e_{n,k}^\T(t).
    \]
    Since the Faber--Schauder functions at each fixed level have disjoint interiors and $\|e_{n,k}^\T\|_\infty=2^{-n/2-1}$,
    \[
        \Big\|\sum_{k\in I_n}2^{\frac n2}c_{n,k}e_{n,k}^\T\Big\|_\infty
        \le\frac12\|c_n\|_{\ell^p}.
    \]
    Hence the series converges uniformly and defines a continuous function with the prescribed coefficients. Thus, $\Gamma_p$ is an isometric isomorphism, which proves completeness. Since $\mathfrak s_p$ is the countable $\ell^1$-direct sum of finite-dimensional spaces, it is separable. Hence $\mathbb R^2\oplus_1\mathfrak s_p$, and therefore $\mathcal S_\T^p$, is separable. The assertion for $x_N^\T$ follows directly by removing the coefficient rows of levels $n\ge N$.
\end{proof}

Before establishing the endpoint estimate, we show that this summability condition follows from subcritical dyadic variation control under a summability condition on the modulus of continuity. In particular, this applies to paths with an all-orders Dini modulus, and hence to paths with positive H\"older regularity.

\begin{proposition}[Subcritical coefficient summability]\label{prop:subcritical-coefficient-summability}
    Let $1<q<p<\infty$ and $x\in\mathcal X_\T^q$. Suppose that
    \[
        \sum_{n=0}^\infty \omega_x\big(2^{-(n+1)}\big)^{1-\frac qp}<\infty.
    \]
    Then $x\in\mathcal S_\T^p$. The summability assumption is satisfied whenever $x$ has an all-orders Dini modulus, and hence whenever $x\in C^\alpha([0,1])$ for some $\alpha\in(0,1]$.
\end{proposition}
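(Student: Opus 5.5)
The proof will go level by level, using an interpolation inequality between $\ell^q$ and $\ell^\infty$ applied to the rescaled coefficients $2^{-n/2}\theta_{n,k}^{x,\T}$. By \eqref{eq:theta_mk},
\[
    2^{-\frac n2}\theta_{n,k}^{x,\T}=\big(x(t_{2k+1})-x(t_{2k})\big)-\big(x(t_{2k+2})-x(t_{2k+1})\big),
\]
where $t_j:=j2^{-(n+1)}$. This is a difference of two consecutive increments of $x$ along $\T^{n+1}$.

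First I bound the sup norm of each level. Each rescaled coefficient is at most $2\omega_x(2^{-(n+1)})$ in absolute value, so
\[
    \|a_n^x\|_{\ell^\infty}\le 2\,\omega_x\big(2^{-(n+1)}\big).
\]

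Next I bound the $\ell^q$ norm of each level by the dyadic $q$-variation. Using $|a-b|^q\le 2^{q-1}(|a|^q+|b|^q)$ and summing over $k\in I_n$,
\[
    \xi_n^{(q)}(x)=\sum_{k\in I_n}\big|2^{-\frac n2}\theta^{x,\T}_{n,k}\big|^q\le 2^{q-1}[x]^{(q)}_{\T^{n+1}}(1)\le 2^{q-1}\sup_{m\ge0}[x]^{(q)}_{\T^m}(1)=:2^{q-1}K<\infty,
\]
with $K$ finite because $x\in\mathcal X_\T^q$. This step does not even require the characterization \eqref{eq:dyadic-Xp-xi}.

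Then I interpolate between the two bounds. Since $p>q$,
\[
    \xi_n^{(p)}(x)=\sum_k |a_{n,k}|^p\le \|a_n^x\|_{\ell^\infty}^{p-q}\sum_k|a_{n,k}|^q\le 2^{p-q}\omega_x(2^{-(n+1)})^{p-q}\,2^{q-1}K.
\]
Taking $p$-th roots gives
\[
    \big(\xi_n^{(p)}(x)\big)^{1/p}\le C_{p,q}K^{1/p}\,\omega_x(2^{-(n+1)})^{1-\frac qp}.
\]
Summing over $n$ and using the hypothesis yields $x\in\mathcal S_\T^p$.

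For the last assertion, I apply the argument from the proof of Proposition~\ref{prop:dini-geometric} with $\lambda=1/2$ and $\delta=1-q/p$. By monotonicity of $\omega_x$,
\[
    \omega_x(2^{-(n+1)})^\delta\log 2\le\int_{2^{-(n+1)}}^{2^{-n}}\omega_x(h)^\delta\,\frac{dh}{h}.
\]
These intervals are disjoint and lie in $(0,1]$, so the series is bounded by $(\log 2)^{-1}\int_0^1\omega_x(h)^\delta\,dh/h<\infty$. The Hölder case then follows from the computation after Definition~\ref{def:all-orders-dini}.

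There is no real obstacle in this argument. The only point needing care is the indexing: level-$n$ coefficients involve increments along $\T^{n+1}$, which is why the hypothesis uses $\omega_x(2^{-(n+1)})$ rather than $\omega_x(2^{-n})$.
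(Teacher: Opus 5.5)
Your proposal is correct and follows the paper's proof: you use the $\ell^\infty$ bound $2\,\omega_x(2^{-(n+1)})$, then $\ell^q$–$\ell^\infty$ interpolation levelwise, then the dyadic-shell comparison $\omega_x(2^{-(n+1)})^\delta\log 2\le\int_{2^{-(n+1)}}^{2^{-n}}\omega_x(h)^\delta\,dh/h$ for the Dini case. The one difference is that you bound $\xi_n^{(q)}(x)\le 2^{q-1}[x]^{(q)}_{\T^{n+1}}(1)$ directly, by the same elementary argument the paper uses for \eqref{eq:endpoint-necessary}, instead of citing the characterization \eqref{eq:dyadic-Xp-xi} from \cite{das-kim2024}; this makes your argument self-contained, since only that easy direction is needed.
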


\begin{proof}
    The coefficient formula \eqref{eq:theta_mk} gives
    \[
        2^{-\frac n2}\max_{k\in I_n} |\theta_{n,k}^{x,\T}| \le 2\omega_x\big(2^{-(n+1)}\big).
    \]
    By interpolation between the $\ell^q$- and $\ell^\infty$-norms,
    \begin{align*}
        \big(\xi_n^{(p)}(x)\big)^{\frac1p}
        &= 2^{-\frac n2} \Big(\sum_{k\in I_n}|\theta_{n,k}^{x,\T}|^p\Big)^{\frac1p}
        \le \bigg[ 2^{-\frac n2} \Big(\sum_{k\in I_n}|\theta_{n,k}^{x,\T}|^q\Big)^{\frac1q} \bigg]^{\frac qp} \bigg[ 2^{-\frac n2}\max_{k\in I_n} |\theta_{n,k}^{x,\T}| \bigg]^{1-\frac qp} \\
        &\le 2^{1-\frac qp} \big(\xi_n^{(q)}(x)\big)^{\frac1p} \, \omega_x\big(2^{-(n+1)}\big)^{1-\frac qp}
        \le 2^{1-\frac qp} \Big(\sup_{m\ge0}\xi_m^{(q)}(x)\Big)^{\frac1p} \, \omega_x\big(2^{-(n+1)}\big)^{1-\frac qp}.
    \end{align*}
    Since $x\in\mathcal X_\T^q$, the characterization \eqref{eq:dyadic-Xp-xi} gives $\sup_{m\ge0}\xi_m^{(q)}(x)<\infty$. Summing the preceding estimate over $n$ proves that
    \[
        \sum_{n=0}^\infty \big(\xi_n^{(p)}(x)\big)^{\frac1p}<\infty,
    \]
    and hence $x\in\mathcal S_\T^p$.

    Suppose next that $x$ has an all-orders Dini modulus. Since $\omega_x$ is nondecreasing, for every $\delta>0$,
    \[
        \omega_x\big(2^{-(n+1)}\big)^\delta\log2 \le \int_{2^{-(n+1)}}^{2^{-n}} \omega_x(h)^\delta\,\frac{dh}{h}.
    \]
    Consequently,
    \[
        \sum_{n=0}^\infty \omega_x\big(2^{-(n+1)}\big)^\delta \le \frac1{\log2} \int_0^1\omega_x(h)^\delta\,\frac{dh}{h}<\infty.
    \]
    Taking $\delta=1-q/p$ verifies the required summability condition.
\end{proof}

\begin{proposition}[Endpoint coefficient estimates] \label{prop:endpoint-coefficient-estimates}
    Let $p>1$. If $x\in \mathcal S_\T^{p}$, then for every $N\ge0$,
    \begin{equation}\label{eq:endpoint-tail}
        \|x-x_N^\T\|_{p\text{-}\cvar} \le \sum_{n=N}^\infty\big(\xi_n^{(p)}(x)\big)^{\frac1p}.
    \end{equation}
    Consequently, $x\in\mathcal C^{0,p\text{-}\cvar}([0,1])$ and
    \begin{equation}\label{eq:endpoint-sufficient}
        \|x\|_{p\text{-}\cvar} \le |x(1)-x(0)|+\sum_{n=0}^\infty\big(\xi_n^{(p)}(x)\big)^{\frac1p}.
    \end{equation}
    Conversely, if $x\in\mathcal C^{p\text{-}\cvar}([0,1])$, then
    \begin{equation}\label{eq:endpoint-necessary}
        \sup_{n\ge0}\big(\xi_n^{(p)}(x)\big)^{\frac1p} \le 2^{1-\frac1p}\|x\|_{p\text{-}\cvar}.
    \end{equation}
\end{proposition}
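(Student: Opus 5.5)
We plan to prove the tail estimate \eqref{eq:endpoint-tail} levelwise, writing $x-x_N^\T$ as a telescoping sum of dyadic interpolation blocks. Set $u_n:=x_{n+1}^\T-x_n^\T$. By the Faber--Schauder representation \eqref{Eq: Schauder representation}, $u_n=\sum_{k\in I_n}\theta_{n,k}^{x,\T}e_{n,k}^\T$. The function $u_n$ vanishes on $\T^n$ and is piecewise linear along $\T^{n+1}$. On each interval of $\T^n$ it is a single tent of height $h_{n,k}:=2^{-\frac n2-1}|\theta_{n,k}^{x,\T}|$.

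The key step is a sharp bound $\|u_n\|_{p\text{-}\cvar}\le(\xi_n^{(p)}(x))^{1/p}$. We do not use Lemma~\ref{lem:block-qvar} here, because its constant $2^{1-1/q}M^{1-1/q}=2^{2-2/p}$ is too large. Instead we argue directly. The function $u_n$ consists of disjoint tents of heights $h_{n,k}$ separated by zeros. For a tent-type (unimodal) function, the supremum of $p$-variation sums over any test partition restricted to one tent is at most $2h_{n,k}^p$, since $p\ge1$ makes it optimal to go up and down once. Since $u_n$ vanishes at the dyadic points of $\T^n$, a test partition crossing such a point can be refined there without decreasing the sum. The needed inequality is $|a-b|^p\le|a|^p+|b|^p$ when $a,b$ have the same sign, and here each of $a,b$ is an increment of a nonnegative or nonpositive tent against zero.

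This step is the main obstacle. When one test interval jumps from inside one tent to inside another, we compare $|u(t)-u(s)|^p$ with $|u(s)|^p+|u(t)|^p$. If the signs agree, the difference is at most the larger value, so the bound holds. If the signs are opposite, we only get $|a-b|^p\le 2^{p-1}(|a|^p+|b|^p)$. To avoid this factor, we instead bound the contribution of each maximal run of points through the total oscillation. Concretely, we check that the $p$-variation of a concatenation of tents with zeros between them is at most $\sum_k 2h_{n,k}^p$. This follows because any chain visiting values $\pm h$ can be dominated by the path $0\to$ peak $\to0$ for each tent, using convexity of $t\mapsto t^p$ and the superadditivity $a^p+b^p\le(a+b)^p$.

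This gives
\[
\|u_n\|_{p\text{-}\cvar}^p\le 2\sum_k h_{n,k}^p=2^{1-p}\,2^{-\frac{np}2}\sum_k|\theta_{n,k}|^p\le\xi_n^{(p)}(x).
\]
By the triangle inequality, $\|x_K^\T-x_N^\T\|_{p\text{-}\cvar}\le\sum_{n=N}^{K-1}(\xi_n^{(p)})^{1/p}$. Since $x_K^\T\to x$ uniformly, Lemma~\ref{lem:pvar-lower-semicontinuity} yields \eqref{eq:endpoint-tail}.

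Taking $N=0$ and using the identity $\|x_0^\T\|_{p\text{-}\cvar}=|x(1)-x(0)|$ gives \eqref{eq:endpoint-sufficient}. Because the tail tends to zero and each $x_N^\T$ is piecewise linear, we conclude $x\in\mathcal C^{0,p\text{-}\cvar}$.

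For the converse \eqref{eq:endpoint-necessary}, we use \eqref{eq:theta_mk}:
\[
2^{-n/2}|\theta_{n,k}|=|2x(m)-x(a)-x(b)|\le|x(m)-x(a)|+|x(b)-x(m)|,
\]
where $a,m,b$ are consecutive points of $\T^{n+1}$. Then $(\alpha+\beta)^p\le2^{p-1}(\alpha^p+\beta^p)$. Summing over $k$ gives
\[
\xi_n^{(p)}\le 2^{p-1}[x]_{\T^{n+1}}^{(p)}\le 2^{p-1}\|x\|_{p\text{-}\cvar}^p,
\]
and taking $p$-th roots finishes the proof.
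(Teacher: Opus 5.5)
Your telescoping argument, the lower-semicontinuity limit, the case $N=0$, the membership in $\mathcal C^{0,p\text{-}\cvar}([0,1])$, and the converse estimate are all correct and match the paper.

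The gap is in the key step. The claimed bound $\|u_n\|_{p\text{-}\cvar}^p\le 2\sum_k h_{n,k}^p$ is false whenever adjacent tents have opposite signs. The superadditivity $a^p+b^p\le(a+b)^p$ you invoke works against you, not for you. Take $n=1$ with $\theta_{1,0}^{x,\T}=-\theta_{1,1}^{x,\T}>0$. Then $u_1$ is a positive tent of height $h$ on $[0,\frac12]$ followed by a negative tent of height $h$ on $[\frac12,1]$. The test partition $\{0,\frac14,\frac34,1\}$ gives
\[
    h^p+(2h)^p+h^p=(2+2^p)h^p>4h^p=2\sum_{k}h_{1,k}^p, \qquad p>1,
\]
because jumping straight from a peak to a trough costs $(h+h)^p\ge h^p+h^p$. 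For the same reason, your remark that a crossing test partition can be refined at a zero of $u_n$ ``without decreasing the sum'' fails in the opposite-sign case. With alternating signs over all $2^n$ tents, the ratio to $2\sum_k h_{n,k}^p$ tends to $2^{p-1}$, so no cleverer bookkeeping can remove this factor.

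The fix is that you do not need to avoid the factor. Your single-tent bound is correct: on $[k2^{-n},(k+1)2^{-n}]$ every increment of $u_n$ is at most $h_{n,k}$ and the total variation is $2h_{n,k}$. Hence any sum of $p$-th powers of increments there is at most $h_{n,k}^{p-1}\cdot 2h_{n,k}$.

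Now insert the points of $\T^n$ into an arbitrary test partition $\mathfrak P$. On every crossing interval, regardless of signs, pay $|a-b|^p\le 2^{p-1}(|a|^p+|b|^p)$. This yields
\[
[u_n]_{\mathfrak P}^{(p)}(1)\le 2^{p-1}\sum_k 2h_{n,k}^p=\xi_n^{(p)}(x),
\]
which is exactly the estimate you need, and is the paper's argument.

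Your objection to Lemma~\ref{lem:block-qvar} was right, but its excess comes from a different place. It bounds the $p$-variation on each interval of $\T^n$ by the total variation, which loses $2^{p-1}$ on a tent; the insertion step is not the culprit. The insertion loss $2^{p-1}$ is precisely the spare factor $2^{1-p}$ that appeared in your own final chain. That spare factor comes from the normalization $h_{n,k}=2^{-\frac n2-1}|\theta_{n,k}^{x,\T}|$.
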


\begin{proof}
    Since $x_n^\T$ is the piecewise linear approximation of $x$ along $\T^n$, we can write
    \[
        u_n^\T:=x_{n+1}^\T-x_n^\T = \sum_{k\in I_n}\theta_{n,k}^{x,\T}e_{n,k}^\T.
    \]
    Given a finite partition $\mathfrak P$, set $\widehat{\mathfrak P}:=\mathfrak P\cup\T^n$. For every interval $[s,t]$ of $\mathfrak P$ containing a point of $\T^n$ in its interior, all newly inserted points are zeros of $u_n^\T$. Hence
    \[
        |u_n^\T(t)-u_n^\T(s)|^p \le 2^{p-1}\big(|u_n^\T(s)|^p+|u_n^\T(t)|^p\big).
    \]
    If no point is inserted into $[s,t]$, its increment remains unchanged. Summing over the intervals of $\mathfrak P$ gives
    \[
        [u_n^\T]_{\mathfrak P}^{(p)}(1) \le 2^{p-1}[u_n^\T]_{\widehat{\mathfrak P}}^{(p)}(1).
    \]
    On the dyadic interval $I_{n,k}:=[t_k^n,t_{k+1}^n]$, the function $u_n^\T$ is a single tent of absolute height $H_{n,k}:=2^{-\frac n2-1}|\theta_{n,k}^{x,\T}|$. Since the total variation of $u_n^\T$ on $I_{n,k}$ is equal to $2H_{n,k}$ and $|u_n^\T(t)-u_n^\T(s)|\le H_{n,k}$ for all $s,t\in I_{n,k}$, we have
    \begin{align*}
        \sum_{\substack{[s,t]\in\widehat{\mathfrak P} \\ [s,t]\subset I_{n,k}}} |u_n^\T(t)-u_n^\T(s)|^p \le H_{n,k}^{p-1} \sum_{\substack{[s,t]\in\widehat{\mathfrak P}\\ [s,t]\subset I_{n,k}}} |u_n^\T(t)-u_n^\T(s)| \le 2H_{n,k}^p.
    \end{align*}
    Therefore,
    \[
        [u_n^\T]_{\mathfrak P}^{(p)}(1) \le 2^{p-1}\sum_{k\in I_n}2 \Big(2^{-\frac n2-1}|\theta_{n,k}^{x,\T}|\Big)^p = \xi_n^{(p)}(x),
    \]
    and hence
    \begin{equation}\label{eq:level-block-pvar}
        \|u_n^\T\|_{p\text{-}\cvar} \le \big(\xi_n^{(p)}(x)\big)^{\frac1p}.
    \end{equation}
    For integers $K>N$, the triangle inequality and \eqref{eq:level-block-pvar} give
    \[
        \|x_K^\T-x_N^\T\|_{p\text{-}\cvar} \le \sum_{n=N}^{K-1}\big(\xi_n^{(p)}(x)\big)^{\frac1p}.
    \]
    Letting $K\to\infty$ and using the lower semicontinuity of the classical $p$-variation from Lemma~\ref{lem:pvar-lower-semicontinuity} proves \eqref{eq:endpoint-tail}. Since each $x_N^\T$ is piecewise linear, it follows that $x\in\mathcal C^{0,p\text{-}\cvar}([0,1])$. Taking $N=0$ and using $\|x_0^\T\|_{p\text{-}\cvar}=|x(1)-x(0)|$ gives \eqref{eq:endpoint-sufficient}.

    For the converse, let
    \[
        a_{n,k}:=x(t_{2k+1}^{n+1})-x(t_{2k}^{n+1}),\qquad
        b_{n,k}:=x(t_{2k+2}^{n+1})-x(t_{2k+1}^{n+1}).
    \]
    The coefficient formula \eqref{eq:theta_mk} for the dyadic Schauder coefficients gives $2^{-\frac{n}{2}} \theta_{n,k}^{x,\T}=a_{n,k}-b_{n,k}$. Consequently,
    \[
        \xi_n^{(p)}(x) = \sum_{k\in I_n}|a_{n,k}-b_{n,k}|^p \le 2^{p-1}\sum_{k\in I_n}\big(|a_{n,k}|^p+|b_{n,k}|^p\big) = 2^{p-1}[x]_{\T^{n+1}}^{(p)}(1).
    \]
    Taking the supremum over $n$ proves \eqref{eq:endpoint-necessary}.
\end{proof}

The same level norms $(\xi_n^{(p)}(x))^{1/p}$ also appear in the Faber--Schauder energy spaces of \cite{kim2026}, where different cross-level summability and tail conditions are used to construct a pathwise integral beyond the classical Young regime. Here, their $\ell^1$-summability instead controls approximation in the classical $p$-variation norm.

Combining Propositions~\ref{prop:subcritical-coefficient-summability}, \ref{prop:endpoint-coefficient-estimates}, and \ref{prop:classical-into-partition} yields the following coefficient-space relations.

\begin{corollary}[Coefficient-space inclusions]\label{cor:sandwich}
    Let $p>1$. Then
    \begin{equation}\label{eq:endpoint-coefficient-sandwich}
        \mathcal S_\T^p \subset\mathcal C^{0,p\text{-}\cvar}([0,1]) \subset\mathcal C^{p\text{-}\cvar}([0,1]) \subset\mathcal X_\T^p.
    \end{equation}
    Moreover, for every $\alpha\in(0,1]$,
    \begin{equation}\label{eq:subcritical-coefficient-refinement}
        C^\alpha([0,1])\cap\mathcal X_\T^{p-} \subset\mathcal S_\T^p.
    \end{equation}
    More generally, every path with an all-orders Dini modulus which belongs to $\mathcal X_\T^{p-}$ also belongs to $\mathcal S_\T^p$.
\end{corollary}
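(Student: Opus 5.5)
The corollary only assembles results already proved, so the plan is to take each inclusion in turn and point to its source.

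For the chain \eqref{eq:endpoint-coefficient-sandwich}, the first inclusion $\mathcal S_\T^p\subset\mathcal C^{0,p\text{-}\cvar}([0,1])$ is the content of Proposition~\ref{prop:endpoint-coefficient-estimates}. Indeed, \eqref{eq:endpoint-tail} shows that the piecewise linear approximations $x_N^\T$ converge to $x$ in the classical $p$-variation seminorm, since the tail of a convergent series tends to zero. Moreover $x_N^\T(0)=x(0)$, so the convergence also holds in the norm $\|\cdot\|_{\mathcal C^{p\text{-}\cvar}}$. By \eqref{eq:endpoint-sufficient}, $x$ has finite $p$-variation, so $x$ lies in the closure of $\mathrm{PL}([0,1])$. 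The second inclusion holds by definition \eqref{eq:vanishing-pvar-space}, since the closure is taken inside the Banach space $\mathcal C^{p\text{-}\cvar}([0,1])$. The third inclusion is Proposition~\ref{prop:classical-into-partition} with $\pi=\T$.

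For \eqref{eq:subcritical-coefficient-refinement}, take $x\in C^\alpha([0,1])\cap\mathcal X_\T^{p-}$. By definition \eqref{eq:pminus-pplus-spaces}, $x\in\mathcal X_\T^q$ for some $1<q<p$. Proposition~\ref{prop:subcritical-coefficient-summability} then applies directly: its summability hypothesis on $\omega_x$ holds for positively H\"older paths. Explicitly, $\omega_x(2^{-(n+1)})\le|x|_{C^\alpha}2^{-\alpha(n+1)}$, which is geometrically summable after raising to the power $1-q/p>0$. Hence $x\in\mathcal S_\T^p$. The final assertion, for paths with an all-orders Dini modulus, is proved the same way, using the second half of Proposition~\ref{prop:subcritical-coefficient-summability} with $\delta=1-q/p$.

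There is no real obstacle here. The only step that needs care is checking that convergence in the seminorm $\|\cdot\|_{p\text{-}\cvar}$ gives convergence in the full norm $\|\cdot\|_{\mathcal C^{p\text{-}\cvar}}$, and this follows because the interpolants $x_N^\T$ fix the value at $0$.
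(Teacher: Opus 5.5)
Your proposal is correct and follows essentially the same route as the paper: Proposition~\ref{prop:endpoint-coefficient-estimates}, the definition of $\mathcal C^{0,p\text{-}\cvar}([0,1])$, and Proposition~\ref{prop:classical-into-partition} give the chain, and Proposition~\ref{prop:subcritical-coefficient-summability} gives the H\"older and all-orders Dini refinements. Your added checks are harmless and accurate: the geometric summability of $\omega_x(2^{-(n+1)})^{1-q/p}$ for H\"older paths, and the passage from seminorm to norm convergence via $x_N^\T(0)=x(0)$.
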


\begin{proof}
    The first inclusion in \eqref{eq:endpoint-coefficient-sandwich} follows from Proposition~\ref{prop:endpoint-coefficient-estimates}, the second follows from the definition of $\mathcal C^{0,p\text{-}\cvar}([0,1])$, and the third follows from Proposition~\ref{prop:classical-into-partition} applied to $\T$.

    Let $x\in C^\alpha([0,1])\cap\mathcal X_\T^{p-}$. Then $x\in\mathcal X_\T^q$ for some $1<q<p$. Proposition~\ref{prop:subcritical-coefficient-summability} gives $x\in\mathcal S_\T^p$, proving \eqref{eq:subcritical-coefficient-refinement}. The conclusion for a path with an all-orders Dini modulus also follows from Proposition~\ref{prop:subcritical-coefficient-summability}.
\end{proof}

\begin{remark}[A Faber--Schauder formulation of the coefficient sandwich]
    \label{rem:coefficient-formulation-sandwich}
    Let $\alpha\in(0,1)$ and define
    \[
        \mathcal H_\alpha^\T(x) := \sup_{n\ge0}\sup_{k\in I_n} 2^{n(\alpha-\frac12)}|\theta_{n,k}^{x,\T}|.
    \]
    Since $|\T^{n+1}|=2^{-(n+1)}$, the dyadic Ciesielski characterization \cite[Theorem~3.4]{fake_fBM} gives
    \[
        x\in C^\alpha([0,1]) \quad \Longleftrightarrow \quad \mathcal H_\alpha^\T(x)<\infty.
    \]
    Together with \eqref{eq:dyadic-Xp-xi}, Corollary~\ref{cor:sandwich} therefore yields, for every $p>1$,
    \begin{align*}
        &\Big\{x\in C^0([0,1]):\mathcal H_\alpha^\T(x)<\infty,\ 
        \sup_{n\ge0}\xi_n^{(q)}(x)<\infty
        \text{ for some }1<q<p\Big\} = C^\alpha([0,1])\cap\mathcal X_\T^{p-} \\
        & \qquad \qquad \subset\mathcal S_\T^p \subset\mathcal C^{0,p\text{-}\cvar}([0,1]) \subset\mathcal C^{p\text{-}\cvar}([0,1]) \subset\mathcal X_\T^p = \Big\{x\in C^0([0,1]): \sup_{n\ge0}\xi_n^{(p)}(x)<\infty\Big\}.
    \end{align*}
    Thus, the strictly subcritical class on the left, the space $\mathcal S_\T^p$, and the ambient space $\mathcal X_\T^p$ are all described entirely in terms of the dyadic Faber--Schauder coefficients. The two classical variation spaces are sandwiched between these coefficient-defined classes.
\end{remark}

For a fixed path $x$, the rightmost coefficient condition $\sup_{n\ge0}\xi_n^{(q)}(x)<\infty$ in Remark~\ref{rem:coefficient-formulation-sandwich} holds for every $q>p_\T(x)$ and fails for every $1<q<p_\T(x)$. If $1<p_\T(x)<\infty$, either possibility may occur at the critical exponent $q=p_\T(x)$. For positively H\"older continuous paths, Theorem~\ref{thm:main-intro} identifies this threshold with $p_{\cvar}(x)$. On the other hand, the H\"older assertion in Corollary~\ref{cor:sandwich} gives
\[
    \big\{x\in C^\alpha([0,1]):p_{\cvar}(x)<p\big\} \subset\mathcal S_\T^p.
\]
Consequently,
\[
    \Big(C^\alpha([0,1])\cap\mathcal C^{p\text{-}\cvar}([0,1])\Big) \setminus\mathcal S_\T^p \subset \big\{x\in C^\alpha([0,1]):p_{\cvar}(x)=p\big\}.
\]
Thus, within every positive H\"older class, the failure of $\mathcal S_\T^p$ to characterize finite classical $p$-variation is confined entirely to the critical boundary $p_{\cvar}(x)=p$. The next subsection shows that the universal inclusions in \eqref{eq:endpoint-coefficient-sandwich} are nevertheless all strict.

\medskip

\subsection{Strictness of the critical endpoint inclusions} \label{subsec:strict-endpoint-hierarchy}

We first provide a necessary condition for approximation by piecewise linear paths in the classical $p$-variation norm. We recall the notation \eqref{eq:dyadic-index-coeff} in terms of Faber--Schauder coefficients.

\begin{lemma}[Vanishing level energies]\label{lem:vanishing-level-energies}
    Let $p>1$. If $x\in\mathcal C^{0,p\text{-}\cvar}([0,1])$, then
    \begin{equation}\label{eq:vanishing-level-energies}
        \xi_n^{(p)}(x)\longrightarrow0
        \qquad\text{as }n\to\infty.
    \end{equation}
\end{lemma}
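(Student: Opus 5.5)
The plan is an $\varepsilon/3$ approximation argument built on the estimate \eqref{eq:endpoint-necessary} of Proposition~\ref{prop:endpoint-coefficient-estimates}. That estimate is linear in $x$: the coefficient map $x\mapsto\theta_{n,k}^{x,\T}$ is linear, and $(\xi_n^{(p)}(x))^{1/p}$ is an $\ell^p$-norm of the rescaled coefficient row $(2^{-n/2}\theta_{n,k}^{x,\T})_{k\in I_n}$. Hence $x\mapsto(\xi_n^{(p)}(x))^{1/p}$ is a seminorm, and for any $x,y\in\mathcal C^{p\text{-}\cvar}([0,1])$,
\[
    \big(\xi_n^{(p)}(x)\big)^{\frac1p}\le\big(\xi_n^{(p)}(y)\big)^{\frac1p}+2^{1-\frac1p}\|x-y\|_{p\text{-}\cvar},
\]
uniformly in $n$.

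Given $\varepsilon>0$, use the definition \eqref{eq:vanishing-pvar-space} to choose $y\in\mathrm{PL}([0,1])$ with $\|x-y\|_{p\text{-}\cvar}<\varepsilon$. It then suffices to show that $\xi_n^{(p)}(y)\to0$ for every continuous piecewise linear $y$. Once this is known, $\limsup_n(\xi_n^{(p)}(x))^{1/p}\le2^{1-1/p}\varepsilon$, and letting $\varepsilon\downarrow0$ gives \eqref{eq:vanishing-level-energies}.

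For piecewise linear $y$ with finitely many breakpoints, say $K$ of them, the coefficient formula \eqref{eq:theta_mk} shows that $2^{-n/2}\theta_{n,k}^{y,\T}$ is a second difference of $y$ on the dyadic interval $[k2^{-n},(k+1)2^{-n}]$. This second difference vanishes unless that interval contains a breakpoint in its interior. At most $K$ (or $2K$) indices $k$ are therefore active. For each active index, $|2^{-n/2}\theta_{n,k}^{y,\T}|\le 2\,\mathrm{Lip}(y)\,2^{-(n+1)}$. Consequently
\[
    \xi_n^{(p)}(y)\le 2K\,\big(\mathrm{Lip}(y)\,2^{-n}\big)^p\longrightarrow0 .
\]

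The only delicate point is the seminorm property, namely that $\xi_n^{(p)}$ of a difference is controlled by $\|x-y\|_{p\text{-}\cvar}$. This follows directly from the linearity of the coefficients together with \eqref{eq:endpoint-necessary} applied to $x-y$, so no step is a real obstacle.
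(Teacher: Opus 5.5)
Your proposal is correct and follows the paper's proof essentially step for step. Both arguments use the same two ingredients: the breakpoint-counting bound $\xi_n^{(p)}(y)\le K\big(\mathrm{Lip}(y)2^{-n}\big)^p$ for piecewise linear $y$, and the seminorm property of $(\xi_n^{(p)})^{1/p}$ together with the uniform estimate \eqref{eq:endpoint-necessary} applied to $x-y$; your hedge ``or $2K$'' is unnecessary, since each breakpoint lies in the interior of at most one level-$n$ dyadic interval, but this does not affect the argument.
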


\begin{proof}
    Let $y$ be a continuous piecewise linear function, and let $J_y$ denote the number of its breakpoints in $(0,1)$. If $\theta_{n,k}^{y,\T}\ne0$, then the interior of the support of $e_{n,k}^\T$ contains at least one breakpoint of $y$. Since the interiors of the level-$n$ supports are pairwise disjoint,
    \[
        \#\big\{k\in I_n:\theta_{n,k}^{y,\T}\ne0\big\}\le J_y.
    \]
    If $L_y$ denotes the maximum absolute slope of $y$, then the coefficient formula \eqref{eq:theta_mk} gives
    \[
        2^{-\frac n2}|\theta_{n,k}^{y,\T}| \le L_y2^{-n}.
    \]
    Consequently, we have
    \begin{equation}    \label{ineq:JyLy}
        \xi_n^{(p)}(y) \le J_yL_y^p2^{-np},
    \end{equation}
    hence $\xi_n^{(p)}(y)\to0$ as $n\to\infty$ and $y\in\mathcal S_\T^p$.

    Now let $x\in\mathcal C^{0,p\text{-}\cvar}([0,1])$. Choose piecewise linear functions $y_j$ such that $\|x-y_j\|_{p\text{-}\cvar}\to0$ as $j\to\infty$. By the linearity of the Schauder coefficients and Minkowski's inequality,
    \[
        \big(\xi_n^{(p)}(x)\big)^{\frac1p} \le \big(\xi_n^{(p)}(x-y_j)\big)^{\frac1p} + \big(\xi_n^{(p)}(y_j)\big)^{\frac1p}.
    \]
    The last assertion of Proposition~\ref{prop:endpoint-coefficient-estimates} gives
    \[
        \sup_{n\ge0}\big(\xi_n^{(p)}(x-y_j)\big)^{\frac1p} \le 2^{1-\frac1p}\|x-y_j\|_{p\text{-}\cvar}.
    \]
    Taking first $\limsup_{n\to\infty}$ and then $j\to\infty$ proves \eqref{eq:vanishing-level-energies}.
\end{proof}

\begin{theorem}[Strict endpoint hierarchy]\label{thm:strict-endpoint-hierarchy}
    For every $p>1$, all inclusions in \eqref{eq:endpoint-coefficient-sandwich} are strict:
    \begin{equation}\label{eq:strict-endpoint-hierarchy}
        \mathcal S_\T^{p}
        \subsetneq \mathcal C^{0,p\text{-}\cvar}([0,1])
        \subsetneq \mathcal C^{p\text{-}\cvar}([0,1])
        \subsetneq \mathcal X_\T^p.
    \end{equation}
\end{theorem}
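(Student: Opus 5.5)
The plan is to exhibit one explicit path for each of the three strict inclusions in \eqref{eq:strict-endpoint-hierarchy}. The non-inclusions will be certified by the coefficient tools already available. Failure of membership in $\mathcal S_\T^p$ is read off from $\sum_n(\xi_n^{(p)})^{1/p}=\infty$. Failure of membership in $\mathcal C^{0,p\text{-}\cvar}([0,1])$ is certified by Lemma~\ref{lem:vanishing-level-energies}. Failure of membership in $\mathcal C^{p\text{-}\cvar}([0,1])$ is certified by Proposition~\ref{prop:classical-into-partition}.

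\emph{Third inclusion.} This one is essentially already done. Take the path $x$ of Proposition~\ref{prop:critical-space-noninvariance} built for the given $p$. By \eqref{eq:endpoint-dyadic-membership}, $x\in\mathcal X_\T^p$. As observed after Remark~\ref{rem:holder-threshold}, $x\notin\mathcal C^{p\text{-}\cvar}([0,1])$, since otherwise Proposition~\ref{prop:classical-into-partition} would force $x\in\mathcal X_\pi^p$ for the deformed sequence $\pi$, contradicting \eqref{eq:critical-space-noninvariance}.

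\emph{Second inclusion.} Use the dyadic Weierstrass-type wave
\[
z:=\sum_{j\ge0}2^{-j/p}\Phi_j .
\]
By \eqref{eq:wave-holder} with $\alpha=1/p\in(0,1)$, we have $z\in C^{1/p}([0,1])$. Hence, by the elementary fact recalled in Remark~\ref{rem:holder-threshold}, $z\in\mathcal C^{p\text{-}\cvar}([0,1])$. On the other hand, \eqref{eq:xi-wave} gives $\xi_n^{(p)}(z)=2^p2^n2^{-n}=2^p$ for every $n$. This does not tend to zero, so Lemma~\ref{lem:vanishing-level-energies} shows $z\notin\mathcal C^{0,p\text{-}\cvar}([0,1])$.

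\emph{First inclusion.} This is the step needing a genuine construction. I will use disjointly supported single dyadic tents at distinct levels, with heights that are $\ell^p$- but not $\ell^1$-summable. For $j\ge1$, choose a level $m_j$ (strictly increasing in $j$) and an index $k_j$ such that the support of $\varphi_{m_j,k_j}$ lies inside $[2^{-j},2^{-j+1}]$. Then set
\[
w:=\sum_{j\ge1}h_j\varphi_{m_j,k_j},\qquad h_j:=\frac1j .
\]
The series converges uniformly because the supports are disjoint and $h_j\to0$. By \eqref{eq:normalized-tent}, the only nonzero coefficients of $w$ are one per active level $m_j$, and $\xi_{m_j}^{(p)}(w)=2^ph_j^p$. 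Therefore
\[
\sum_n\big(\xi_n^{(p)}(w)\big)^{1/p}=2\sum_jh_j=\infty,
\]
so $w\notin\mathcal S_\T^p$.

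To show $w\in\mathcal C^{0,p\text{-}\cvar}([0,1])$, approximate $w$ by the piecewise linear partial sums $w_J:=\sum_{j\le J}h_j\varphi_{m_j,k_j}$. The key estimate is that for disjointly supported tents $g=\sum_{j>J}h_j\varphi_{m_j,k_j}$ one has
\[
\|g\|_{p\text{-}\cvar}^p\le C_p\sum_{j>J}h_j^p .
\]
To prove it, refine any test partition by inserting the endpoints of the supports, where $g$ vanishes. Each inserted point costs at most a factor $2^{p-1}$, exactly as in the proof of Lemma~\ref{lem:block-qvar}. Then on a single tent the total variation is $2h_j$ and every increment is at most $h_j$, so that tent contributes at most $2h_j^p$. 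Since $\sum_j j^{-p}<\infty$ for $p>1$, this gives $\|w-w_J\|_{p\text{-}\cvar}\to0$, and so $w\in\mathcal C^{0,p\text{-}\cvar}([0,1])$.

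I expect this localization estimate for disjoint bumps to be the only step needing care. It mirrors the argument for \eqref{eq:level-block-pvar}, so it should go through with the constant $C_p=2^p$.
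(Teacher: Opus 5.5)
Your proposal is correct. For the first two inclusions it follows the paper. Your $z$ is exactly the paper's $F_p$, and with $m_j=j$, $k_j=1$ your $w$ is the paper's $f_\gamma$ at $\gamma=1$. The paper allows any $\gamma\in(1/p,1]$ and states the exact identity $\|f_\gamma-f_\gamma^{(N)}\|_{p\text{-}\cvar}^p=2\sum_{n>N}n^{-\gamma p}$, but your zero-insertion bound with constant $2^p$ suffices. Two small points: the factor $2^{p-1}$ is paid once per test interval, not once per inserted point, and because the supports accumulate at $0$ you should insert only the finitely many support endpoints adjacent to the test points, not all of them.

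The genuine difference is the third inclusion. The paper uses the elementary witness $f_{1/p}=\sum_{n\ge1}n^{-1/p}\varphi_{n,1}$. For it, $\xi_n^{(p)}(f_{1/p})=2^p/n$ is bounded, while $\|f_{1/p}\|_{p\text{-}\cvar}^p=2\sum_n n^{-1}=\infty$ is computed directly. You instead reuse the path of Proposition~\ref{prop:critical-space-noninvariance}. You certify $x\notin\mathcal C^{p\text{-}\cvar}([0,1])$ indirectly, through Proposition~\ref{prop:classical-into-partition} and the deformed sequence $\pi$. This is legitimate, since the paper makes the same observation after Remark~\ref{rem:holder-threshold}, and there is no circularity.

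Your choice gives a stronger conclusion. That witness lies in $\bigcap_{0<\alpha<1/p}C^\alpha([0,1])$, so the last inclusion stays strict after intersecting with any $C^\alpha([0,1])$ with $\alpha<1/p$. By contrast, $f_{1/p}$ has no positive H\"older regularity at all: its indices $p_\T(f_{1/p})=1<p=p_{\cvar}(f_{1/p})$ differ, which Theorem~\ref{thm:main-intro} forbids for H\"older paths.

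The paper's choice gives simplicity and reuse. The same $f_{1/p}$ and its truncations appear again in Remark~\ref{rem:function-regularity-sharpness} and Corollary~\ref{cor:nonclosed-classical-embedding}. There, the decay $\xi_n^{(p)}(f_{1/p})=2^p/n$ is what makes the truncations converge in $\mathcal X_\T^p$. Your witness has $\xi_{r^3+s}^{(p)}(x)=2^p$ on every active level, so it would not serve there directly.
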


\begin{proof}
    We use the dyadic tents $\varphi_{n,k}$ of \eqref{eq:normalized-tent} and the triangular waves $\Phi_n$ of \eqref{eq:triangular-wave}.

    First, fix $\gamma\in(1/p,1]$ and define
    \begin{equation}    \label{def:f-gamma}
        f_\gamma(t):=\sum_{n=1}^\infty n^{-\gamma}\varphi_{n,1}(t), \qquad t \in [0, 1].
    \end{equation}
    The supports of the functions $\varphi_{n,1}$ have disjoint interiors and accumulate only at the origin. If
    \[
        f_\gamma^{(N)}:=\sum_{n=1}^N n^{-\gamma}\varphi_{n,1},
    \]
    then
    \[
        \|f_\gamma-f_\gamma^{(N)}\|_{p\text{-}\cvar}^p = 2\sum_{n>N}n^{-\gamma p} \xlongrightarrow{N\to\infty} 0.
    \]
    Hence, $f_\gamma\in\mathcal C^{0,p\text{-}\cvar}([0,1])$. On the other hand, $\xi_n^{(p)}(f_\gamma)=2^pn^{-\gamma p}$ and therefore
    \[
        \sum_{n=1}^\infty\big(\xi_n^{(p)}(f_\gamma)\big)^{\frac1p} = 2\sum_{n=1}^\infty n^{-\gamma}=\infty.
    \]
    Thus, the first inclusion is strict.

    Next, define
    \[
        F_p(t):=\sum_{n=0}^\infty2^{-\frac np}\Phi_n(t), \qquad t \in [0, 1].
    \]
    By \eqref{eq:wave-holder}, $F_p\in C^{1/p}([0,1])$, and hence $F_p\in\mathcal C^{p\text{-}\cvar}([0,1])$. However, \eqref{eq:xi-wave} gives $\xi_n^{(p)}(F_p)=2^p$ for every $n\ge0$. Lemma~\ref{lem:vanishing-level-energies} therefore shows that $F_p\notin\mathcal C^{0,p\text{-}\cvar}([0,1])$. Thus the second inclusion is strict.

    Finally, set $\gamma = 1/p$ in \eqref{def:f-gamma}:
    \begin{equation}    \label{def:f-1-p}
        f_{1/p}(t) = \sum_{n=1}^\infty n^{-\frac1p}\varphi_{n,1}(t), \qquad t \in [0, 1].
    \end{equation}
    The function $f_{1/p}$ is continuous and $\xi_n^{(p)}(f_{1/p})=2^p n^{-1}$ for $n\ge1$. Hence, $f_{1/p}\in\mathcal X_\T^p$ by \eqref{eq:dyadic-Xp-xi}. Since the tents are nonnegative, have disjoint interiors, and vanish at the endpoints of their supports, for every $r\ge1$,
    \begin{equation}    \label{eq:f-r-var}
        \|f_{1/p}\|_{r\text{-}\cvar}^r = 2\sum_{n=1}^\infty n^{-\frac rp}.
    \end{equation}
    In particular, $\|f_{1/p}\|_{p\text{-}\cvar}^p = \infty$. Therefore $f_{1/p}\notin\mathcal C^{p\text{-}\cvar}([0,1])$, and the third inclusion is strict.
\end{proof}

From the estimate \eqref{ineq:JyLy} in the proof of Lemma~\ref{lem:vanishing-level-energies}, every continuous piecewise linear function belongs to $\mathcal S_\T^p$. Since $\mathcal C^{0,p\text{-}\cvar}([0,1])$ is the closure of the piecewise linear functions in the classical $p$-variation norm, the continuous embedding
\[
    \mathcal S_\T^p\hookrightarrow\mathcal C^{0,p\text{-}\cvar}([0,1])
\]
has dense range. The strictness of the first inclusion in \eqref{eq:strict-endpoint-hierarchy} therefore shows that this range is not closed.

\begin{remark}[Failure of index invariance without regularity]  \label{rem:function-regularity-sharpness}
    For the last example $f_{1/p}$ in \eqref{def:f-1-p}, we have $p_\T(f_{1/p})=1$ and $p_{\cvar}(f_{1/p})=p$. Indeed, for every $q>1$, one has $\xi_n^{(q)}(f_{1/p})=2^q n^{-q/p}$ for $n\ge1$, and this sequence is uniformly bounded, whereas \eqref{eq:f-r-var} shows that $\|f_{1/p}\|_{r\text{-}\cvar}<\infty$ if and only if $r>p$. Thus, some regularity assumption on the function, such as H\"older regularity or an all-orders Dini modulus, is indispensable for the equality $p_\T(x)=p_{\cvar}(x)$.
\end{remark}

The same example $f_{1/p}$ in the proof of Theorem~\ref{thm:strict-endpoint-hierarchy} also yields a functional-analytic consequence of the strict final inclusion in \eqref{eq:strict-endpoint-hierarchy}.

\begin{corollary}[Nonclosed range of the classical $p$-variation embedding] \label{cor:nonclosed-classical-embedding}
    For every $p>1$, the continuous embedding $\mathcal C^{p\text{-}\cvar}([0,1]) \hookrightarrow\mathcal X_\T^p$ does not have closed range.
\end{corollary}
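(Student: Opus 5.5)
The embedding $\iota:\mathcal C^{p\text{-}\cvar}([0,1])\hookrightarrow\mathcal X_\T^p$ is continuous and injective by Proposition~\ref{prop:classical-into-partition}. We argue by contradiction using the bounded inverse theorem. Suppose the range $\iota(\mathcal C^{p\text{-}\cvar})$ were closed in $\mathcal X_\T^p$. Then it would be a Banach space under $\|\cdot\|_\T^{(p)}$, and $\iota$ would be a continuous bijection onto it. By the open mapping theorem, $\iota^{-1}$ would then be bounded, so there would exist $C<\infty$ with
\[
    \|x\|_{\mathcal C^{p\text{-}\cvar}}\le C\|x\|_\T^{(p)}\qquad\text{for all }x\in\mathcal C^{p\text{-}\cvar}([0,1]).
\]
The plan is to contradict this inequality using truncations of the example $f_{1/p}$ from \eqref{def:f-1-p}.

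Set $f^{(N)}:=\sum_{n=1}^N n^{-1/p}\varphi_{n,1}$. This is a piecewise linear function, so it lies in $\mathcal C^{p\text{-}\cvar}([0,1])$.

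For the classical norm, the tents are nonnegative, have disjoint interiors, and vanish at the endpoints of their supports. The computation giving \eqref{eq:f-r-var}, applied to the finite sum, yields
\[
    \|f^{(N)}\|_{p\text{-}\cvar}^p=2\sum_{n=1}^N n^{-1},
\]
which diverges as $N\to\infty$. Since $f^{(N)}(0)=0$, the $\mathcal C^{p\text{-}\cvar}$-norm of $f^{(N)}$ diverges as well.

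For the dyadic norm, we have $\|f^{(N)}\|_\infty\le 1$. It then suffices to bound $\sup_m [f^{(N)}]_{\T^m}^{(p)}(1)$ uniformly in $N$. I would do this directly rather than through the coefficient characterization, in order to avoid unquantified constants. Fix $m$. Every $\varphi_{n,1}$ with $n\ge m$ is supported in $[2^{-n},2^{-n+1}]\subset[0,2^{-m+1}]$ and vanishes at all points of $\T^m$ except possibly the point $2^{-m}$. More precisely, $\varphi_{n,1}$ vanishes at $2^{-m}$ unless its peak $3\cdot2^{-n-1}$ equals $2^{-m}$, which never happens. Hence only the tents with $n<m$ contribute at points of $\T^m$, and for these the tents are fully resolved by the grid. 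The increments along $\T^m$ thus consist of the tent with $n=m-1$, which is sampled at its midpoint, and the tents with $n\le m-2$, which are linear between grid points. The tent with $n=m-1$ contributes at most $2(m-1)^{-1}$. A tent with $n\le m-2$ at height $h_n=n^{-1/p}$ has $2^{m-n}$ grid increments, each of size $h_n2^{-(m-n)+1}$, giving a contribution of order $h_n^p2^{-(m-n)(p-1)}$. Summing over $n$, this is bounded by a constant $C_p$ independent of $m$ and $N$.

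Therefore $\|f^{(N)}\|_\T^{(p)}$ stays bounded in $N$ while $\|f^{(N)}\|_{\mathcal C^{p\text{-}\cvar}}\to\infty$, contradicting the inequality above. Hence the range of $\iota$ is not closed.

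An alternative, softer route also works. The range contains the piecewise linear functions, and $f^{(N)}\to f_{1/p}$ in $\mathcal X_\T^p$. The tail $\sum_{n>N}$ has $\T$-variation controlled by a geometric sum times $\sup_{n>N}n^{-1}$, which tends to zero. So $f_{1/p}$ lies in the closure of the range but not in the range, which is again nonclosedness. In both approaches the main step is the uniform (respectively, vanishing) bound on the dyadic variation sums. That is the only real computation, and it is where I would be careful about the half-resolved tent at level $m-1$.
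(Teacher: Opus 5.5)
Your proof is correct. Your main route differs from the paper's, while your ``softer'' alternative is exactly the paper's argument.

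The paper works directly with the tail $R_N=f_{1/p}-f_{1/p}^{(N)}$. It shows $\sup_m[R_N]_{\T^m}^{(p)}(1)\le\frac{2^p}{N+1}\sum_{\ell\ge1}2^{-(p-1)\ell}$ and $\|R_N\|_\infty=(N+1)^{-1/p}$. Hence $f_{1/p}$ lies in the $\mathcal X_\T^p$-closure of the range, but by its infinite classical $p$-variation it is not in the range itself.

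You instead use that an injective bounded operator between Banach spaces has closed range if and only if it is bounded below. This rests on the bounded inverse theorem and hence on the completeness of both $\mathcal C^{p\text{-}\cvar}([0,1])$ and $\mathcal X_\T^p$. You then refute the lower bound with the truncations $f^{(N)}$.

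\textbf{What your route buys.}
\begin{itemize}
\item You only need the dyadic sums of $f^{(N)}$ to stay bounded, not the tails to vanish.
\item The decay of the heights plays no role on the dyadic side: your bound $2^p\sum_{\ell\ge1}2^{-(p-1)\ell}$ uses only $n^{-1}\le1$, so tents of equal height would work just as well.
\item It proves directly that $\|\cdot\|_\T^{(p)}$ and the classical norm are not equivalent on $\mathcal C^{p\text{-}\cvar}([0,1])$.
\end{itemize}

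\textbf{What the paper's route buys.} It is constructive and avoids the open mapping theorem. It exhibits an explicit element of the closure outside the range, tying the corollary to the strict final inclusion of the hierarchy. Nonequivalence of the norms then follows trivially, since equivalent norms would make the range complete and hence closed.

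\textbf{A small point of justification.} A tent $\varphi_{n,1}$ with $n\ge m$ vanishes at $2^{-m}$ not because of where its peak lies. The reason is that $2^{-m}$ never lies in the open support $(2^{-n},2^{-n+1})$, and a tent is nonzero on its whole open support. Your conclusion that only the tents with $n<m$ are seen by $\T^m$ is nevertheless correct.
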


\begin{proof}
    Recall the function $f_{1/p}$ from \eqref{def:f-1-p}. Let
    \[
        f_{1/p}^{(N)}:=\sum_{n=1}^N n^{-\frac1p}\varphi_{n,1}.
    \]
    Each $f_{1/p}^{(N)}$ is piecewise linear and hence belongs to $\mathcal C^{p\text{-}\cvar}([0,1])$. Set $R_N:=f_{1/p}-f_{1/p}^{(N)}$. If $m\le N+1$, then $R_N$ vanishes at every point of $\T^m$. If $m>N+1$, a direct calculation gives
    \[
        [R_N]_{\T^m}^{(p)}(1) = 2^p\sum_{n=N+1}^{m-1}\frac{2^{-(p-1)(m-n)}}{n} \le \frac{2^p}{N+1}\sum_{\ell=1}^\infty2^{-(p-1)\ell}.
    \]
    Moreover, the disjoint supports of the tents give $\|R_N\|_\infty=(N+1)^{-1/p}\to0$ as $N\to\infty$. Together with the preceding estimate, this proves $\|f_{1/p}-f_{1/p}^{(N)}\|_\T^{(p)}\to0$ as $N\to\infty$. Since $f_{1/p}\notin\mathcal C^{p\text{-}\cvar}([0,1])$ by Theorem~\ref{thm:strict-endpoint-hierarchy}, the range of the embedding is not closed. 
\end{proof}

\bigskip

\bigskip

\noindent \textbf{Acknowledgment}

\smallskip

\noindent The author is grateful to Purba Das for suggesting the question of partition invariance of the variation index, for helpful discussions, and for reading the draft.

\bigskip

\noindent \textbf{Funding}

\smallskip

\noindent Donghan Kim is supported by the National Research Foundation of Korea under Grant RS-2025-00513609, funded by the Korean government (MSIT), and by an Ewon Assistant Professorship Award at KAIST.

\bigskip

\bibliography{pathwise1}

@unpublished{kim2026,
  author = {Kim, Donghan},
  title = {{Pathwise integration beyond Young via Faber--Schauder energy spaces}},
  year = {2026},
  note = {Preprint, arXiv:2606.13331}
}

@unpublished{das-kim-lim2026,
  author = {Das, Purba and Kim, Donghan and Lim, Fang Rui},
  title = {Banach spaces of continuous paths with finite $p$-th variation},
  year = {2026},
  note = {Preprint, arXiv:2604.05941}
}

@article{das-kim2024,
title = {On isomorphism of the space of continuous functions with finite $p$-th variation along a partition sequence},
journal = {Journal de Mathématiques Pures et Appliquées},
volume = {203},
pages = {103753},
year = {2025},
issn = {0021-7824},
doi = {https://doi.org/10.1016/j.matpur.2025.103753},
url = {https://www.sciencedirect.com/science/article/pii/S0021782425000972},
author = {Purba Das and Donghan Kim}
}

@article{fake_fBM,
  author = {Bayraktar, Erhan and Das, Purba and Kim, Donghan},
  title = {H\"older regularity and roughness: construction and examples},
  year = {2025},
  volume = {31},
  issue = {2},
  pages = {1084-1113},
  doi = {10.3150/24-BEJ1761},
  journal = {Bernoulli}
}

@article {Ciesielski:isomorphism,
    AUTHOR = {Ciesielski, Z.},
    TITLE = {On the isomorphisms of the spaces {$H_{\alpha }$} and {$m$}},
    JOURNAL = {Bull. Acad. Polon. Sci. S\'{e}r. Sci. Math. Astronom. Phys.},
    FJOURNAL = {Bulletin de l'Acad\'{e}mie Polonaise des Sciences. S\'{e}rie des Sciences Math\'{e}matiques, Astronomiques et Physiques},
    VOLUME = {8},
    YEAR = {1960},
    PAGES = {217--222},
    ISSN = {0001-4117},
    MRCLASS = {46.25},
    MRNUMBER = {132389},
    MRREVIEWER = {L. Nachbin},
}

@article{das2020,
    author = {Rama Cont and Purba Das},
title = {{Quadratic variation and quadratic roughness}},
volume = {29},
journal = {Bernoulli},
number = {1},
publisher = {Bernoulli Society for Mathematical Statistics and Probability},
pages = {496 -- 522},
year = {2023}
}

@article{das2021,
 title = {Quadratic variation along refining partitions: Constructions and examples},
journal = {Journal of Mathematical Analysis and Applications},
volume = {512},
number = {2},
pages = {126173},
year = {2022},
issn = {0022-247X},
author = {Rama Cont and Purba Das}
}

@article{ananova2017,
	Author = {Anna Ananova and Rama Cont},
	Doi = {http://dx.doi.org/10.1016/j.matpur.2016.10.004},
	Issn = {0021-7824},
	Journal = {Journal de Math{\'e}matiques Pures et Appliqu{\'e}es},
	Pages = {737--757},
	Title = {Pathwise integration with respect to paths of finite quadratic variation},
	Url = {http://www.sciencedirect.com/science/article/pii/S0021782416301155},
	Year = {2017},
	volume = {107},
	number={6}
}

@article{perkowski2019,
  author  = {Cont, Rama and Perkowski, Nicolas},
  title   = {Pathwise integration and change of variable formulas for continuous paths with arbitrary regularity},
  journal = {Transactions of the American Mathematical Society, Series B},
  volume  = {6},
  pages   = {161--186},
  year    = {2019},
  doi     = {10.1090/btran/34}
}

@book{friz2010,
  title={Multidimensional Stochastic Processes as Rough Paths: Theory and Applications},
  author={Friz, P.K. and Victoir, N.B.},
  isbn={9781139487214},
  series={Cambridge Studies in Advanced Mathematics},
  url={https://books.google.co.kr/books?id=CVgwLatxfGsC},
  year={2010},
  publisher={Cambridge University Press}
}

@book{FrizHairer,
	Author = {Friz, Peter K. and Hairer, Martin},
	Doi = {10.1007/978-3-319-08332-2},
	Isbn = {978-3-319-08331-5; 978-3-319-08332-2},
	Mrclass = {60-02 (34F05 35R60 60H07 60H10 60H15)},
	Mrnumber = {3289027},
	Mrreviewer = {Fabrice Baudoin},
	Pages = {xiv+251},
	Publisher = {Springer},
	Series = {Universitext},
	Title = {A course on rough paths},
	Url = {http://dx.doi.org/10.1007/978-3-319-08332-2},
	Year = {2014}}

@incollection{follmer1981,
	Author = {F{\"o}llmer, H.},
	Booktitle = {Seminar on {P}robability, {XV} ({U}niv. {S}trasbourg, {S}trasbourg, 1979/1980) ({F}rench)},
	Mrclass = {60H05},
	Mrnumber = {622559},
	Mrreviewer = {Nicole El Karoui},
	Pages = {143--150},
	Publisher = {Springer, Berlin},
	Series = {Lecture Notes in Math.},
	Title = {Calcul d'{I}t\^o sans probabilit\'es},
	Volume = {850},
	Year = {1981}}

@article{ruhong2022,
  author  = {Cont, Rama and Jin, Ruhong},
  title   = {Fractional {I}to calculus},
  journal = {Transactions of the American Mathematical Society, Series B},
  volume  = {11},
  pages   = {727--761},
  year    = {2024},
  doi     = {10.1090/btran/185}
}
\bibliographystyle{apalike}

\end{document}